\documentclass[preprint, 10pt, english]{elsarticle}
\usepackage{amsmath}
\usepackage{latexsym, amssymb}
\usepackage{mathtools}
\usepackage{amsthm}
\usepackage{color}
\usepackage{txfonts} 
\usepackage{babel}
\usepackage[all]{xy}

\newtheorem{thm}{Theorem}[section] 

\newtheorem{cor}[thm]{Corollary}
\newtheorem{defn}[thm]{Definition}

\newtheorem{exmpl}[thm]{Example}
\newtheorem{lem}[thm]{Lemma}
\newtheorem{prop}[thm]{Proposition}

\newtheorem{rem}[thm]{Remark}

\newtheorem{ques}[thm]{Question}

\DeclareMathOperator{\mychar}{char} %

\def\ra{{\rightarrow}}

\newcommand\operA[2]{{\if!#2!\operatorname{#1}\else{\operatorname{#1}_{#2}^{\phantom{I}}}\fi}} 
\newcommand\set[1]{\{#1\}}

\newcommand\ind{\operatorname{ind}}

\newcommand\eq[1]{{(\ref{#1})}}
\newcommand\Eq[1]{{Equation~(\ref{#1})}}
\newcommand\Lref[1]{{lemma~\ref{#1}}}%
\newcommand\Pref[1]{{Proposition~\ref{#1}}}%
\newcommand\Eref[1]{{Example~\ref{#1}}}%
\newcommand\Cref[1]{{Corollary~\ref{#1}}}%
\newcommand\Rref[1]{{Remark~\ref{#1}}}%
\newcommand\Tref[1]{{Theorem~\ref{#1}}}%
\newcommand\Dref[1]{{Definition~\ref{#1}}}%
\newcommand\Sref[1]{{Section~\ref{#1}}}%
\newcommand\Ssref[1]{{Subsection~\ref{#1}}}%

\newcommand\an[1]{{{#1}_{\operatorname{an}}}}

\newcommand\tensor[1][]{{\otimes_{#1}}}
\def\sub{\subseteq}

\newcommand\Brp[1][2]{{\,{}_{#1}\!\operatorname{Br}}}

\def\dim{{\operatorname{dim}}}

\def\Z{\mathbb{Z}}
\def\F{\mathbb{F}}

\newcommand\Norm[1][]{\operA{N}{#1}}

\newcommand\mul[1]{{#1^{\times}}} 
\newcommand\dimcol[2]{{[{#1}\!:\!{#2}]}} 
\newcommand{\Trace}[1][]{\if!#1!\operatorname{Tr}\else{\operatorname{Tr}_{#1}^{\phantom{I}}}\fi} 

\long\def\forget#1\forgotten{{}} %
\newcommand\suchthat{{\,:\ \,}}
\newcommand\subjectto{{\,|\ }}
\def\co{{\,{:}\,}}
\newcommand\lam{{\lambda}}

\def\({\left(}
\def\){\right)}

\newcommand\card[1]{{\left|#1\right|}}
\newcommand\isom{{\,\cong\,}}

\def\Ra{\Rightarrow}
\newcommand\Qf[1]{{\left<{#1}\right>}}              
\newcommand\Pf[1]{{\left<\left<{#1}\right>\right>}} 
\newcommand\MPf[1]{{\left<\left<{#1}\right]\right]}} 
\newcommand\QA[2]{{(#2,#1]}} 
\newcommand\Alb{{\frak{a}}} 
\def\HQ{{\varmathbb{H}}} 
\def\Ra{{\Rightarrow}}
\newcommand\db[1]{{(\!\!\;(#1)\!\!\;)}}
\newcommand\LAY[3][]{{\begin{array}{c}\mbox{#2} \if#1!{}\else{+}\fi \\ \mbox{#3}\end{array}}}

\newif\iffurther
\furtherfalse

\newcommand\ideal[1]{{\left<#1\right>}}
\newcommand\sg[1]{{\left<#1\right>}}

\journal{Communications in Algebra}

\begin{document}
\begin{frontmatter}

\title{Linkage of Quadratic Pfister Forms}

\author{Adam Chapman}
\ead{adam1chapman@yahoo.com}
\address{Department of Computer Science, Tel-Hai Academic College, Upper Galilee, 12208 Israel}
\author{Shira Gilat}
\ead{shira.gilat@live.biu.ac.il}
\author{Uzi Vishne}
\ead{vishne@math.biu.ac.il}
\address{Department of Mathematics, Bar-Ilan University, Ramat-Gan 52900, Israel}

\begin{abstract}
We study the necessary conditions for sets of quadratic $n$-fold Pfister forms to have a common $(n-1)$-fold Pfister factor. For any set $S$ of $n$-fold Pfister forms generating a subgroup of $I_q^n F/I_q^{n+1} F$ of order $2^s$ in which every element has an $n$-fold Pfister representative, we associate an invariant in $I_q^{n+1} F$ which lives inside $I_q^{n+s-1} F$ when the forms in $S$ have a common $(n-1)$-fold Pfister factor. We study the properties of this invariant and compute it explicitly in a few interesting cases.
\end{abstract}

\begin{keyword}
Quadratic Forms, Pfister Forms, Linkage, Quaternion Algebras
\MSC[2010] 11E81 (primary); 11E04, 16K20 (secondary)
\end{keyword}
\end{frontmatter}


\section{Introduction}

The linkage of quaternion algebras has been the subject of several papers in recent years. Two quaternion algebras over a field $F$ are linked if they share a common quadratic field extension $K$ of $F$. When $\operatorname{char}(F)=2$, $K/F$ can be either separable or inseparable, and so we may specify the type of linkage accordingly.
Draxl proved in \cite{Draxl:1975} that if two quaternion algebras are inseparably linked then they are separably linked as well.
In \cite{Lam:2002} Lam simplified the proof and provided a counterexample to the converse. This result was generalized in \cite{Chapman:2015} to cyclic $p$-algebras of any prime degree. In \cite{ElduqueVilla:2005} a necessary and sufficient condition was given for two separably linked quaternion algebras to be inseparably linked.
In \cite{Faivre:thesis} Faivre generalized these results to quadratic $n$-fold Pfister forms for any $n \geq 2$.

The notion of linkage can be studied for more than two quaternion algebras:
we say that a set $S=\{Q_1,\dots,Q_t\}$ of quaternion algebras over $F$ is linked if there exists a quadratic field extension $K$ of $F$ such that $K \subseteq Q_i$ for all $i \in \{1,\dots,t\}$. We say that $S$ is tight if every class in the subgroup $\langle Q_1,\dots,Q_t \rangle$ of $\Brp(F)$ has a quaternion representative. Peyre gave in \cite{Peyre1995} an example of a tight set of three quaternion algebras over a field $F$ with $\operatorname{char}(F) \neq 2$ that is not linked. In \cite{Sivatski:2014} Sivatski defined an invariant in $I_q^3 F/I_q^4 F$ of tight sets of three quaternion algebras that vanishes when the set is linked. An example of a tight but not linked set of three quaternion algebras over a field of characteristic not 2 with a trivial Sivatski invariant was recently constructed in \cite{QueguinerTignol:2015}.

In this paper we study the linkage of sets of quadratic Pfister forms. Quaternion algebras can be identified with their norm forms which are quadratic $2$-fold Pfister forms. We say that a set of $n$-fold Pfister forms is {\bf{linked}} if there exists an $(n-1)$-fold Pfister form which is a common factor to all the forms in the set.
Generalizing the notion introduced above, we say that a set of $n$-fold Pfister forms is {\bf{tight}} if every class in the group it generates in $I_q^nF/I_q^{n+1}F$ is represented by an $n$-fold Pfister form. In \cite[Section~2]{ELW79} such sets are said to ``generate a linked group''. Every linked set is tight, and our goal is to propose further restrictions on a tight set, bringing it closer to linkage.

Let $S$ be a tight set of $n$-fold Pfister forms generating a group of order $2^s$ in $I_q^nF/I_q^{n+1}F$. We associate to $S$ an invariant $\Sigma_S \in I_q^{n+1}F$ (see \Dref{Sigmadef}) that satisfies $\Sigma_S \in I_q^{n+s-1}F$ when $S$ is linked.

Over fields of characteristic 2, we show that a right-linked pair of $n$-fold Pfister forms is left-linked if and only if the associated invariant is trivial (see \Tref{twoforms}).
The notion of linkage for larger sets is more complicated.
Just as a right-linked set is always tight, a left-linked set must be {\bf{strongly tight}}, namely it generates a subgroup whose elements are represented by Pfister forms not only in the quotient $I^n_qF/I^{n+1}_qF$, but also in $I^n_qF$ itself. When $S$ is strongly tight, the invariant $\Sigma_S$ is trivial.  In \Sref{sec:7} we show that a tight set $S$ is strongly tight if and only if the invariants associated to its subsets are all trivial. This allows us to conclude that a right-linked set which is pairwise left-linked is in fact strongly tight.

\Sref{quaternion} is devoted to quaternion algebras. We observe that the anisotropic part of the invariant associated to a set of $s$ $2$-fold Pfister forms has dimension at most $2^{s+1}$. It follows that if $S$ is right-linked and contains some pair of left-linked forms then $\Sigma_S = 0$. In Sections~\ref{stnolink} and \ref{higherintersect} we develop an additive notation for quadratic Pfister forms in characterstic $2$ \`{a} la Jacobson, and provide a set of $n$-fold forms of size $n+1$ which is strongly tight but not linked. This answers \cite[Open Question 1]{Sivatski:2014} in the negative when the characteristic is $2$.
We conclude the paper with an example of a tight triplet of quaternion algebras over a field $F$ of characteristic 2 whose invariant belongs to a nonzero class in $I^3_qF/I^4_qF$.

\section{Quadratic Pfister Forms}\label{sec:2}

We recall the basic structure theory of quadratic forms. For reference see \cite{EKM}. Let $F$ be a field. If $\operatorname{char}(F) \neq 2$ we tacitly assume that $F$ contains a square root of $-1$.

The hyperbolic plane $\HQ$ is the unique (up to isometry) two-dimensional nonsingular isotropic quadratic form $(u_1,u_2) \mapsto u_1 u_2$.

The diagonal bilinear form $b(u,v) = \alpha_1 u_1v_1+\dots+\alpha_n u_nv_n$ is denoted by $\Qf{\alpha_1,\dots,\alpha_n}$. The same notation stands for the quadratic form $b(u,u)$. The notation $[\alpha,\beta]$ stands for the 2-dimensional quadratic form $\alpha u_1^2+u_1 u_2+\beta u_2^2$.
Every quadratic form over $F$ can be written as an orthogonal sum
\begin{equation*}\label{genform}
\Qf{\alpha_1,\dots,\alpha_t} \perp [\beta_1,\gamma_1] \perp \cdots \perp [\beta_r,\gamma_r].
\end{equation*}
In characteristic not $2$ we may assume $r = 0$ (indeed $[\beta,\gamma] = \Qf{\beta,\gamma-(4\beta)^{-1}}$ and $[0,0] = \Qf{1,-1}$), and the form is nonsingular if $\alpha_1,\dots,\alpha_t \neq 0$. In characteristic~$2$, $r$ and $t$ are unique and the form is nonsingular if $t = 0$.

Every nonsingular quadratic form $\varphi$ decomposes uniquely as
$$\varphi \simeq i \times \HQ \perp \an{\varphi}$$
where $\an{\varphi}$ is anisotropic. 
We call $i$ the Witt index of $\varphi$ and denote it by $i_W(\varphi)$.


Two nonsingular quadratic forms are Witt equivalent if their underlying anisotropic subforms are isometric.
The group of Witt equivalence classes of even dimensional nonsingular quadratic forms over $F$, with respect to orthogonal sum, is denoted by $I_q^1 F$. This is a module over the Witt ring $WF$ (of symmetric nondegenerate bilinear forms, modulo metabolic forms), with respect to tensor product.

We abuse the terminology by often identifying a quadratic form with its Witt equivalence class.
The group $I_q^1 F$ is generated by the forms $[\alpha,\beta]$;  if $\operatorname{char}(F) \neq 2$, it is also generated by the forms $\Qf{\alpha,\beta}$ ($\alpha,\beta \in \mul{F}$). Since we assume $\sqrt{-1} \in F$ when the  characteristic is not $2$, $I_q^1F$ is annihilated by the element $\Qf{1,1}$ of the Witt ring.

The bilinear forms $\Pf{\beta} = \Qf{1,\beta}$ are called (bilinear) $1$-fold Pfister forms. These forms span the basic ideal $IF$ of $WF$. Powers of $IF$ are denoted $I^nF$. The tensor products $\Pf{\beta_1,\dots,\beta_n} = \Pf{\beta_1} \tensor \cdots \tensor \Pf{\beta_n}$ are called bilinear $n$-fold Pfister forms.

The quadratic form $[1,\alpha]$ is called a (quadratic) $1$-fold Pfister form, and denoted by $\MPf{\alpha}$ (in characteristic not $2$ the customary notation is $\Pf{\alpha} = \Qf{1,\alpha}$, but $\Pf{\alpha} = \MPf{\alpha+1/4}$). 
For any quadratic form $\varphi$ and $\beta_1,\dots,\beta_n \in F^\times$, $\Qf{\beta_1,\dots,\beta_n} \otimes \varphi = \beta_1 \varphi \perp \dots \perp \beta_n \varphi$.
For any integer $n \geq 2$, we define the quadratic $n$-fold Pfister form $\MPf{ \beta_1,\dots,\beta_{n-1},\alpha}$ as $\Pf{\beta_{1},\dots,\beta_{n-1}} \otimes \MPf{\alpha}$.
We denote by $P_n(F)$ the set of $n$-fold Pfister forms.
A~Pfister form is isotropic if and only if it is hyperbolic.
We define $I_q^n F$ to be the subgroup of $I_q^1 F$ generated by the scalar multiples of quadratic $n$-fold Pfister forms. It follows that $I^mF \cdot I_q^kF = I^{m+k}_qF$ for every $m,k \geq 0$.


\section{Linkage and tightness}\label{sec:3}

If $b$ is a bilinear $m$-fold Pfister form and $\varphi$ is a quadratic $k$-fold Pfister form, then $b \tensor \varphi$ is a quadratic $(m+k)$-fold Pfister form. In this case we say that $b$ is a left divisor and $\varphi$ is a right divisor of $b \tensor \varphi$. A set $S$ of quadratic $(m+k)$-fold Pfister forms is said to be $k$-right-linked if there is a quadratic $k$-fold Pfister form $\varphi$ which is a right divisor of every element of $S$.
Similarly, $S$ is said to be $m$-left-linked if there is a bilinear $m$-fold Pfister form $b$ which is a left divisor of every element of $S$.

Throughout the paper, we are only interested in linkage where the free part is a (bilinear or quadratic) $1$-fold Pfister form:
\begin{defn}\label{lrlink}
A set of quadratic $n$-fold Pfister forms is {\bf{right-linked}} if the forms are $(n-1)$-right-linked and {\bf{left-linked}} if they are $(n-1)$-left-linked.
\end{defn}

If $\operatorname{char}(F) \neq 2$ there is a one-to-one correspondence between quadratic and symmetric bilinear forms, so the notions of left- and right- linkage coincide.

The following fact allows us to introduce a necessary condition for a set of forms to be linked:
\begin{rem}[{\cite[Cor.~23.9]{EKM}}]\label{unique}
If two $n$-fold Pfister forms are equivalent modulo $I_q^{n+1} F$ then they are isometric.
\end{rem}

\begin{defn}
We say that a set of quadratic $n$-fold Pfister forms is {\bf{tight}} if every element of the group it generates in $I_q^{n}F/I_q^{n+1}F$ is represented by a Pfister form (which is unique, by \Rref{unique}). The set is {\bf{strongly tight}} if the group it generates in $I_q^nF$ consists of Pfister forms.
\end{defn}
Obviously, a strongly tight set is tight.
A subset of a (strongly) tight set is itself (strongly) tight. In the case of quaternions (namely $n = 2$), a group generated by a tight set is called in \cite{QueguinerTignol:2015} a `quaternionic subgroup' (we could call a group generated by a tight set a `symbolic subgroup').
%

By \cite[Proposition 24.5]{EKM}, every two forms in a tight set are right-linked.
We move on to show that every right-linked set is tight, and every left-linked set is strongly tight.


%

\begin{prop}\label{3.2}
Let $S$ be a right-linked set of $n$-fold Pfister forms. Then there are a quadratic $(n-1)$-fold Pfister form $\varphi$ and elements $\beta_1,\dots,\beta_s \in \mul{F}$ such that the elements of the group generated by $S$ in $I_q^nF/I_q^{n+1}F$ are represented by the Pfister forms $\Pf{\beta_{i_1}\cdots \beta_{i_k}} \tensor \varphi$ for the subsets $\set{i_1,\dots,i_k} \sub \set{1,\dots,s}$.
\end{prop}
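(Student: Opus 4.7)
The plan is to fix a common right divisor and then reduce everything to a single multiplicative identity in the Witt ring. Since $S$ is right-linked, by \Dref{lrlink} there exist an $(n-1)$-fold quadratic Pfister form $\varphi$ and elements $\gamma_1,\dots,\gamma_t \in \mul{F}$ such that $S = \set{\Pf{\gamma_1} \tensor \varphi, \dots, \Pf{\gamma_t} \tensor \varphi}$.

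The engine of the argument is the congruence
$$\Pf{\alpha} \tensor \varphi + \Pf{\beta} \tensor \varphi \equiv \Pf{\alpha\beta} \tensor \varphi \pmod{I_q^{n+1} F}$$
for every $\alpha,\beta \in \mul{F}$. I would verify it by expanding diagonally in $WF$: using that $-1$ is a square under the standing assumptions (trivially in characteristic $2$, and because $\sqrt{-1} \in F$ otherwise), one obtains $\Pf{\alpha} + \Pf{\beta} - \Pf{\alpha\beta} = \Qf{1,\alpha,1,\beta,1,\alpha\beta}$, and pairing two of the $1$'s rewrites this as $\Qf{1,1} + \Pf{\alpha,\beta}$. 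Tensoring with $\varphi \in I_q^{n-1}F$, the term $\Qf{1,1}\tensor\varphi$ vanishes because $\Qf{1,1}$ annihilates $I_q^1 F$ as recorded in \Sref{sec:2}, while $\Pf{\alpha,\beta}\tensor\varphi$ lies in $I^2 F \cdot I_q^{n-1} F = I_q^{n+1} F$. Iterating the congruence yields
$$\sum_{j \in J} \Pf{\gamma_j} \tensor \varphi \equiv \Pf{\prod_{j \in J} \gamma_j} \tensor \varphi \pmod{I_q^{n+1} F}$$
for every $J \sub \set{1,\dots,t}$.

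Specializing to $\alpha = \beta$ shows that each generator is $2$-torsion in $I_q^n F/I_q^{n+1}F$, so the group $G$ generated by $S$ there is $2$-elementary abelian of order $2^s$ and hence $\isom \F_2^s$. Since the classes of $\Pf{\gamma_1}\tensor\varphi,\dots,\Pf{\gamma_t}\tensor\varphi$ generate $G$, a standard $\F_2$-dimension argument lets me extract a sub-family $\beta_1,\dots,\beta_s \in \set{\gamma_1,\dots,\gamma_t}$ whose classes form an $\F_2$-basis of $G$. The iterated congruence then shows that every element of $G$ is represented by $\Pf{\beta_{i_1} \cdots \beta_{i_k}} \tensor \varphi$ for the corresponding subset $\set{i_1,\dots,i_k} \sub \set{1,\dots,s}$, and uniqueness of the representing Pfister form is immediate from \Rref{unique}. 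The only step carrying genuine content is the Witt-ring identity $\Pf{\alpha} + \Pf{\beta} - \Pf{\alpha\beta} = \Qf{1,1} + \Pf{\alpha,\beta}$; once that is in hand, the rest is combinatorial bookkeeping.
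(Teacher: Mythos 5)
Your proof is correct and follows essentially the same route as the paper: write the elements of $S$ as $\Pf{\gamma_i}\tensor\varphi$ and invoke the identity $\Pf{\alpha}\perp\Pf{\beta}\sim\Pf{\alpha\beta}\perp\Pf{\alpha,\beta}$, so that after tensoring with $\varphi$ the error term lands in $I^2F\cdot I_q^{n-1}F=I_q^{n+1}F$. Your explicit handling of the $\Qf{1,1}$ term and the extraction of an $\F_2$-basis are harmless elaborations of the same argument.
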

\begin{proof}
By the assumption, the elements of $S$ are of the form $\Pf{\beta_i} \tensor \varphi$ where $\varphi$ is a fixed $(n-1)$-fold Pfister form. Since $\Pf{\beta} \perp \Pf{\beta'} \sim \Pf{\beta\beta'} \perp \Pf{\beta,\beta'} \equiv \Pf{\beta\beta'} \pmod{I^2F}$, the forms $\Pf{\beta_{i_1}\cdots \beta_{i_k}} \tensor \varphi$ represent the elements in the generated subgroup.
\end{proof}

In particular,
\begin{cor}\label{rl->t}
A right-linked set is tight.
\end{cor}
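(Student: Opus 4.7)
The plan is to derive the corollary directly from \Pref{3.2}, which has already done the essential work. I would begin by fixing a right-linked set $S$ of quadratic $n$-fold Pfister forms and invoking \Pref{3.2} to obtain a quadratic $(n-1)$-fold Pfister form $\varphi$ and scalars $\beta_1,\dots,\beta_s \in \mul{F}$ such that every class in the subgroup of $I_q^nF/I_q^{n+1}F$ generated by $S$ is represented modulo $I_q^{n+1}F$ by a form of the shape $\Pf{\beta_{i_1}\cdots\beta_{i_k}} \tensor \varphi$ for some subset $\set{i_1,\dots,i_k} \sub \set{1,\dots,s}$.

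Next I would observe that each such representative is by construction a quadratic $n$-fold Pfister form: $\Pf{\beta_{i_1}\cdots\beta_{i_k}}$ is a bilinear $1$-fold Pfister form, $\varphi$ is a quadratic $(n-1)$-fold Pfister form, and their tensor product falls under the definition of a quadratic $n$-fold Pfister form recalled at the end of \Sref{sec:2}. Hence every element of the subgroup generated by $S$ in $I_q^nF/I_q^{n+1}F$ admits a Pfister representative, which is precisely the definition of tightness.

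There is essentially no obstacle here; the content of the corollary has been absorbed into \Pref{3.2}, and the remaining step is only to notice that the forms listed there are exactly $n$-fold Pfister forms. The only mild subtlety worth flagging in the writeup is that the representatives produced are unique by \Rref{unique}, so the statement ``every element is represented by an $n$-fold Pfister form'' is unambiguous, but this is not needed for the proof itself.
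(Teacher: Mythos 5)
Your proposal is correct and matches the paper's approach exactly: the paper presents this corollary with the phrase ``In particular'' immediately after \Pref{3.2}, precisely because the representatives $\Pf{\beta_{i_1}\cdots\beta_{i_k}} \tensor \varphi$ produced there are themselves quadratic $n$-fold Pfister forms, which is all that tightness requires. Your extra remark about uniqueness via \Rref{unique} is accurate but, as you note, not needed.
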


\subsection{Left-linkage in characteristic~$2$}

Now consider left-linkage and assume $F$ has characteristic~$2$.
\begin{prop}\label{3.5}
In characteristic~$2$, a left-linked set of quadratic $n$-fold Pfister forms is strongly tight.

Moreover if $S$ is left-linked then there are a bilinear $(n-1)$-fold Pfister form $b$ and elements $\alpha_1,\dots,\alpha_s \in F$ such that the elements of the group generated by $S$ in $I_q^nF$ are the Pfister forms $b \tensor \MPf{\alpha_{i_1} + \cdots +\alpha_{i_k}}$ for the subsets $\set{i_1,\dots,i_k} \sub \set{1,\dots,s}$.
\end{prop}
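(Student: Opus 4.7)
The plan is to use the hypothesis of left-linkage to write every element of $S$ in the form $b \otimes \MPf{\alpha_i}$ for a common bilinear $(n-1)$-fold Pfister form $b$ and scalars $\alpha_1,\ldots,\alpha_s \in F$. The subgroup generated by $S$ in $I_q^n F$ then consists of orthogonal sums $b \otimes \MPf{\alpha_{i_1}} \perp \cdots \perp b \otimes \MPf{\alpha_{i_k}}$ indexed by subsets $\{i_1,\ldots,i_k\} \subseteq \{1,\ldots,s\}$; by distributivity of the tensor product over orthogonal sum, each such sum equals $b \otimes (\MPf{\alpha_{i_1}} \perp \cdots \perp \MPf{\alpha_{i_k}})$ in $I_q^n F$. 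Hence the problem reduces to showing that an orthogonal sum of quadratic $1$-fold Pfister forms is itself Witt equivalent to a quadratic $1$-fold Pfister form.

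The core rank-two identity I would establish is $\MPf{\alpha} \perp \MPf{\alpha'} \sim \MPf{\alpha + \alpha'}$ in $I_q^1 F$. Direct computation with the $4$-dimensional nonsingular form $u_1^2 + u_1 u_2 + \alpha u_2^2 + u_3^2 + u_3 u_4 + \alpha' u_4^2$ exhibits the isotropic vector $(1,0,1,0)$, so the form has positive Witt index; a linear change of variables (for instance $u_3 \mapsto u_1 + u_3$ followed by a standard completion-of-the-square trick in characteristic $2$) then splits off a hyperbolic plane $\HQ$ and leaves the binary form $[1,\alpha+\alpha'] = \MPf{\alpha+\alpha'}$. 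This is the characteristic-$2$ analogue of the familiar identity $\Pf{\beta} \perp \Pf{\beta'} \sim \Pf{\beta\beta'}$ modulo a lower Pfister correction, but here the correction vanishes in $I_q^1 F$ itself because $\MPf{\cdot}$ is truly additive at the level of Witt classes of $1$-fold Pfister forms.

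Tensoring the rank-two identity with $b$ and iterating gives, by an easy induction on the size of the subset,
\[
b \otimes \MPf{\alpha_{i_1}} \perp \cdots \perp b \otimes \MPf{\alpha_{i_k}} \sim b \otimes \MPf{\alpha_{i_1} + \cdots + \alpha_{i_k}}
\]
in $I_q^n F$. Every element of the subgroup generated by $S$ in $I_q^n F$ is therefore represented by a quadratic $n$-fold Pfister form, which proves strong tightness and simultaneously yields the explicit parametrization claimed in the ``moreover'' clause. I do not anticipate a real obstacle; the only step needing care is the clean verification of the rank-two identity in characteristic $2$, after which the proposition follows by purely formal manipulations with tensor products and orthogonal sums.
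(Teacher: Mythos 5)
Your proposal is correct and follows essentially the same route as the paper: both reduce to writing the forms as $b \tensor \MPf{\alpha_i}$ for a common bilinear $(n-1)$-fold Pfister form $b$ and then invoke the characteristic-$2$ identity $\MPf{\alpha} \perp \MPf{\alpha'} \sim \MPf{\alpha+\alpha'}$ in $I_q^1F$ (the paper cites \cite[Example~7.23]{EKM} for this, whereas you verify it by a direct change of variables, which checks out). The only cosmetic difference is that you prove the rank-two identity from scratch rather than quoting it.
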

\begin{proof}
Same as the proof of \Pref{3.2}, using the fact that in characteristic~$2$, $\MPf{\alpha}+\MPf{\alpha'} \sim \MPf{\alpha+\alpha'}$ is an equality in $I_q^1 F$, and not just modulo $I_q^2F$ \cite[Example~7.23]{EKM}.
\end{proof}

We prove a somewhat stronger criterion for strong tightness in \Pref{cpll}.

\section{An obstruction to strong tightness}\label{sec:4}

Given a tight set $S$ of $n$-fold Pfister forms, we now define an invariant $\Sigma_S$ which vanishes if the set is strongly tight (and thus if the set is left-linked in characteristic~$2$), and is a higher Pfister form if the set is right-linked. The invariant defined by Sivatski for triplets of quaternion algebras is the class of $\Sigma_S$ modulo $I_q^4 F$ in the special case of $\operatorname{char}(F) \neq 2$, $n=2$ and $|S|=3$ (more details are given in \Sref{quaternion} below).

\begin{defn}\label{Sigmadef}
Let $S$ be a finite tight set of (more than one) $n$-fold Pfister forms. We define $\Sigma_S$ to be the sum in~$I_q^nF$ of the unique Pfister representatives of the elements of the subgroup generated by the image of~$S$ in~$I_q^nF/I_q^{n+1}F$ (which exist by assumption and are unique by \Rref{unique}, so $\Sigma_S$ is well defined).
\end{defn}

\begin{prop}\label{down}
\begin{enumerate}
\item For any tight set $S$ of $n$-fold Pfister forms, $\Sigma_S \in I_q^{n+1}F$.
\item If $S$ is strongly tight then $\Sigma_S = 0$.
\end{enumerate}
\end{prop}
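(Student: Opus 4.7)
The plan is to first analyze the subgroup $G \leq I_q^n F / I_q^{n+1} F$ generated by the image of $S$. The key structural observation is that $G$ is an elementary abelian $2$-group: every quadratic $n$-fold Pfister form $\varphi$ satisfies $\varphi \perp \varphi = \Qf{1,1} \tensor \varphi = 0$ in $I_q^n F$, since $\Qf{1,1}$ vanishes in the Witt ring of bilinear forms (using $\sqrt{-1}\in F$ when $\operatorname{char}(F)\neq 2$, and the metabolicity of $\Qf{1,1}$ via the isotropic vector $(1,1)$ when $\operatorname{char}(F)=2$). Combined with \Rref{unique}, which forces distinct $n$-fold Pfister forms to project to distinct classes in $G$, the hypothesis $|S|\geq 2$ yields $|G| = 2^s$ with $s \geq 2$.

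For part (1), I would invoke the elementary identity that in any elementary abelian $2$-group of rank at least $2$ the sum of all elements is zero. By tightness each class in $G$ has a unique Pfister representative in $I_q^n F$, and $\Sigma_S$ is by definition the sum of these representatives; its image in $I_q^n F/I_q^{n+1} F$ therefore equals the sum of all elements of $G$, which vanishes, so $\Sigma_S \in I_q^{n+1} F$.

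For part (2), I would pass to the subgroup $G' = \langle \varphi : \varphi \in S\rangle \leq I_q^n F$, which by strong tightness consists entirely of Pfister forms, and show that the natural projection $\pi\colon G'\to G$ is an isomorphism. Surjectivity is clear; for injectivity, any $g' \in \ker\pi$ is a Pfister form lying in $I_q^{n+1}F$, which must vanish by the Arason--Pfister Hauptsatz (since a nonzero element of $I_q^{n+1}F$ has anisotropic dimension at least $2^{n+1} > 2^n$). Under this isomorphism the Pfister representative of each $g\in G$ is exactly $\pi^{-1}(g)\in G'$, so
\[
\Sigma_S \;=\; \sum_{g\in G}\pi^{-1}(g) \;=\; \sum_{g'\in G'}g',
\]
which vanishes by the same elementary abelian $2$-group identity applied to $G'\cong G$.

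The main technical hurdle I anticipate is the injectivity of $\pi$ in part (2): \Rref{unique} as stated only compares two honest $n$-fold Pfister forms, but what is really required is that a Pfister form lying in $I_q^{n+1}F$ is necessarily hyperbolic, which is the Arason--Pfister Hauptsatz applied to forms of dimension $2^n$. Everything else reduces to the bookkeeping identity that the sum of the elements of $(\mathbb{Z}/2\mathbb{Z})^s$ vanishes for $s\geq 2$.
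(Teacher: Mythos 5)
Your proof is correct and follows essentially the same route as the paper's: both parts reduce to the fact that the sum of all elements of a noncyclic group of exponent $2$ vanishes, applied in $I_q^nF/I_q^{n+1}F$ for part (1) and in $I_q^nF$ itself for part (2). The only difference is that you make explicit the bijection between the group generated in $I_q^nF$ and its image in the quotient --- for which \Rref{unique}, applied to a Pfister class in $I_q^{n+1}F$ and the hyperbolic Pfister form, already suffices in place of the Hauptsatz --- a point the paper leaves implicit in the phrase ``the same argument, applied to $I_q^nF$''.
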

\begin{proof}
The image of $\Sigma_S$ in $I_q^{n} F/I_q^{n+1} F$ is the sum of all the elements in a noncyclic group of exponent $2$, which is zero; this proves the first claim. The second claim follows from the same argument, applied to $I_q^nF$ rather than $I_q^nF/I_q^{n+1}F$
\end{proof}

\begin{prop}\label{sep}
Let $S$ be a right-linked set of $n$-fold Pfister forms, generating a group of order $2^s$ modulo $I_q^{n+1}F$. Then $\Sigma_S$ is the Witt class of an $(n+s-1)$-fold Pfister form.
\end{prop}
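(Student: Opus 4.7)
The plan is to identify $\Sigma_S$ explicitly as the Witt class of $\Pf{\beta_1,\dots,\beta_s}\tensor\varphi$, where $\varphi$ is the common $(n-1)$-fold right factor and $\beta_1,\dots,\beta_s$ are the scalars provided by \Pref{3.2}. Since this is the tensor product of a bilinear $s$-fold Pfister form with a quadratic $(n-1)$-fold Pfister form, it is a quadratic $(n+s-1)$-fold Pfister form, exactly as required.

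First I would apply \Pref{3.2} to a minimal generating subset of $S$ of size $s$, producing $\beta_1,\dots,\beta_s \in \mul F$ and a quadratic $(n-1)$-fold Pfister form $\varphi$ such that the $2^s$ elements of the subgroup of $I_q^n F/I_q^{n+1}F$ are represented by $\Pf{\prod_{i\in I}\beta_i}\tensor\varphi$ for $I \sub \{1,\dots,s\}$. Because the subgroup has order exactly $2^s$, the $2^s - 1$ Pfister forms arising from nonempty $I$ are pairwise non-isometric by \Rref{unique}, so they are precisely the unique Pfister representatives summed in $\Sigma_S$, giving
\[
\Sigma_S \;=\; \Biggl(\, \sum_{\emptyset \neq I \sub \{1,\dots,s\}} \Pf{\prod_{i\in I}\beta_i} \Biggr) \tensor \varphi \quad \text{in } I_q^n F.
\]

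Next I would evaluate the inner sum in the Witt ring $WF$. Expanding the tensor product $\bigotimes_{i=1}^s \Qf{1,\beta_i}$ as a diagonal bilinear form yields $\Pf{\beta_1,\dots,\beta_s} = \sum_{I \sub \{1,\dots,s\}}\Qf{\prod_{i\in I}\beta_i}$, while each 1-fold summand decomposes as $\Pf{\beta} = \Qf{1} + \Qf{\beta}$. A direct count then gives the bookkeeping identity
\[
\sum_{\emptyset \neq I}\Pf{\prod_{i\in I}\beta_i} \;=\; (2^s - 2)\Qf{1} + \Pf{\beta_1,\dots,\beta_s} \quad \text{in } WF.
\]

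The last step is a parity argument: because $2^s - 2$ is even, the correction $(2^s-2)\Qf{1}$ is an integer multiple of $\Qf{1,1}$, which by the conventions of \Sref{sec:2} (metabolicity in characteristic $2$, and $\Qf{1,1}=\Qf{1,-1}=0$ otherwise) annihilates $I_q^1 F$. Tensoring with $\varphi \in I_q^{n-1} F \sub I_q^1 F$ kills the correction, leaving $\Sigma_S = \Pf{\beta_1,\dots,\beta_s}\tensor\varphi$, a quadratic $(n+s-1)$-fold Pfister form. The only real obstacle is the parity bookkeeping; no machinery beyond \Pref{3.2}, \Rref{unique}, and the \Sref{sec:2} standing assumptions is required.
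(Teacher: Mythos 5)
Your proof is correct and follows essentially the same route as the paper: both reduce via \Pref{3.2} to summing the left $1$-fold factors of the representatives and identify $\Sigma_S$ with $\Pf{\beta_1,\dots,\beta_s}\tensor\varphi$. The only difference is that you verify the combinatorial identity by direct expansion in $WF$ plus the parity observation that $(2^s-2)\Qf{1}$ is a multiple of $\Qf{1,1}$, whereas the paper proves an equivalent identity by induction on $s$; both hinge on $\Qf{1,1}$ annihilating $I_q^1F$.
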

\begin{proof}
By \Rref{3.2}, and using the notation there, the elements of the group generated by $S$ are represented by the forms $\Pf{\beta_{i_1}\cdots \beta_{i_k}} \tensor \varphi$, where $\set{i_1,\dots,i_k} \sub \set{1,\dots,s}$.

One proves by induction on $t = 1,\dots,s$ that for every $\alpha,\beta_1,\dots,\beta_t \in \mul{F}$, $$\sum_{\set{i_1,\dots,i_k} \sub \set{1,\dots,t}} \Pf{\alpha\beta_{i_1}\cdots \beta_{i_k}} = \Pf{\beta_1,\dots,\beta_t} + \Pf{\alpha,\beta_1,\dots,\beta_t}$$
\sloppy in $I_q^1 F$. Taking $\alpha = 1$, the sum of the left $1$-fold factors of the representatives in the group generated by $S$ is seen to be the $s$-fold form $\Pf{\beta_1,\dots,\beta_s}$, so $\Sigma_S = \Pf{\beta_1,\dots,\beta_s} \tensor \varphi$ (equality in the Witt group).
\end{proof}


\forget
\begin{lem}
Assuming $\operatorname{char}(F)=2$, the direct sum of the forms $\MPf{\alpha_1,\dots,\alpha_n,\beta_1}$ and $\MPf{\alpha_1,\dots,\alpha_n,\beta_2}$ is Witt equivalent to $\MPf{\alpha_1,\dots,\alpha_n,\beta_1+\beta_2}$.
\end{lem}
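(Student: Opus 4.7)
The plan is to factor out the common bilinear Pfister part and reduce to a 1-fold identity. By the definition of quadratic $n$-fold Pfister forms, for each $i \in \{1,2\}$,
$$\MPf{\alpha_1,\dots,\alpha_n,\beta_i} = \Pf{\alpha_1,\dots,\alpha_n} \otimes \MPf{\beta_i},$$
so by distributivity of tensor product over orthogonal sum,
$$\MPf{\alpha_1,\dots,\alpha_n,\beta_1} \perp \MPf{\alpha_1,\dots,\alpha_n,\beta_2} = \Pf{\alpha_1,\dots,\alpha_n} \otimes \bigl(\MPf{\beta_1} \perp \MPf{\beta_2}\bigr).$$
Thus it suffices to establish the $n = 0$ case, $\MPf{\beta_1} \perp \MPf{\beta_2} \sim \MPf{\beta_1+\beta_2}$ in $I_q^1F$, and then tensor both sides by the bilinear Pfister form $\Pf{\alpha_1,\dots,\alpha_n}$, using the fact that tensoring with a bilinear form preserves Witt equivalence.

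The 1-fold identity is exactly the equality in $I_q^1 F$ already used inside the proof of \Pref{3.5}, and is recorded as \cite[Example~7.23]{EKM}. A direct verification goes as follows: the 4-dimensional form $[1,\beta_1] \perp [1,\beta_2]$ admits the isotropic vector $(1,0,1,0)$ in characteristic~$2$, so it is Witt equivalent to its 2-dimensional anisotropic residue; that residue is nonsingular with Arf invariant $\beta_1+\beta_2$, hence isometric to $[1,\beta_1+\beta_2] = \MPf{\beta_1+\beta_2}$.

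There is no real obstacle here; the only characteristic-$2$ phenomenon being used is the additivity of 1-fold quadratic Pfister forms with respect to the second slot, which fails in characteristic not $2$ (the analogous statement for bilinear $\Pf{\beta_i}$ holds only modulo $I^2F$, not in $IF$). It is worth noting that the same argument yields a more general statement: tensoring any Witt-trivial relation among 1-fold quadratic Pfister forms by a common bilinear Pfister factor produces a Witt-trivial relation among the corresponding $(n{+}1)$-fold quadratic Pfister forms, which is the underlying principle behind \Pref{3.5}.
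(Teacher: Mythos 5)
Your proof is correct and rests on the same core fact as the paper's: the binary additivity $[1,\beta_1]\perp[1,\beta_2]\sim[1,\beta_1+\beta_2]$ in characteristic~$2$. The difference is in how you propagate it to the $n$-fold forms. The paper decomposes $\MPf{\alpha_1,\dots,\alpha_n,\beta_i}$ explicitly into the $2^n$ scaled binary summands $\alpha\cdot[1,\beta_i]$, with $\alpha$ running over partial products of the $\alpha_j$, and cancels termwise using the isometry $\alpha\cdot[1,\beta]\isom[\alpha^{-1},\alpha\beta]$ together with $[\gamma,\delta]\perp[\gamma,\delta']\sim[\gamma,\delta+\delta']$. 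You instead factor out $\Pf{\alpha_1,\dots,\alpha_n}$ once and invoke the $WF$-module structure of $I_q^1F$ (tensoring with a bilinear form is well defined on Witt classes, since it distributes over $\perp$ and kills hyperbolic forms). Your packaging is cleaner and makes the general principle visible; the paper's is more elementary and self-contained, avoiding any appeal to the module structure.

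One step of your ``direct verification'' of the binary identity is not valid as stated: after splitting off $\HQ$ from $[1,\beta_1]\perp[1,\beta_2]$, you identify the $2$-dimensional residue with $[1,\beta_1+\beta_2]$ solely from nonsingularity and the Arf invariant. The Arf invariant determines a binary nonsingular form only up to similarity, not isometry (e.g.\ $c\cdot[1,\delta]\isom[c,\delta/c]$ has Arf invariant $\delta$ but need not represent $1$, and need not even be Witt equivalent to $[1,\delta]$, since their sum is the $2$-fold Pfister form $\MPf{c,\delta}$). To close this you must also check that the residue represents $1$, e.g.\ by the explicit change of basis $e_1,\,e_2+f_2$ spanning a plane isometric to $[1,\beta_1+\beta_2]$ with complement $\langle e_1+f_1,\,f_2\rangle\isom\HQ$. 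Since the identity is in any case \cite[Example~7.23]{EKM}, which the paper itself cites, this does not affect the validity of your overall argument.
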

\begin{proof}
Short proof: $[1,\beta_1]+[1,\beta_2] \sim [1,\beta_1+\beta_2]$.

One can write the form $\MPf{\alpha_1,\dots,\alpha_n,\beta_1}$ as the direct sum of $\alpha \cdot [1,\beta_1]$ where $\alpha$ ranges over the partial products of $a_1,\dots,a_n$. Similarly $\MPf{\alpha_1,\dots,\alpha_n,\beta_2}$ is the direct sum of the terms $\alpha \cdot [1,\beta_2]$ and $\MPf{\alpha_1,\dots,\alpha_n,\beta_1+\beta_2}$ is the direct sum of the terms $\alpha \cdot [1,\beta_1+\beta_2]$.
It is therefore enough to prove that $\alpha \cdot [1,\beta_1] \perp \alpha \cdot [1,\beta_2]$ is Witt equivalent to $\alpha \cdot [1,\beta_1+\beta_2]$.
Now, $\alpha \cdot [1,\beta_1] \simeq [\alpha^{-1},\alpha \beta_1]$ and $\alpha \cdot [1,\beta_2] \simeq [\alpha^{-1},\alpha \beta_2]$, and so $\alpha \cdot [1,\beta_1] \perp \alpha \cdot [1,\beta_1]$ is isometric to $[\alpha^{-1},\alpha \beta_1] \perp [\alpha^{-1},\alpha \beta_2]$, which in turn is Witt equivalent to $[\alpha^{-1},\alpha (\beta_1+\beta_2)]$, and this is isometric to $\alpha \cdot [1,\beta_1+\beta_2]$.
\end{proof}
\forgotten

\section{Left- and Right- Linkage of pairs}\label{LeftRight}

Throughout this section we assume $\operatorname{char}(F)=2$, and fix $n \geq 2$. We start by recalling some results from \cite{Faivre:thesis}:

\begin{cor}[{\cite[Corollary 2.1.4]{Faivre:thesis}}]
If two Pfister forms are left-linked then they are also right-linked.
\end{cor}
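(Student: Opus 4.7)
The plan is to reduce the corollary to \Pref{3.5} together with the pair-linkage criterion from \cite[Proposition 24.5]{EKM}, which has already been invoked earlier in the paper. Given a left-linked pair of quadratic $n$-fold Pfister forms $\pi_1, \pi_2$, \Pref{3.5} produces a bilinear $(n-1)$-fold Pfister form $b$ and scalars $\alpha_1, \alpha_2 \in F$ with $\pi_i = b \tensor \MPf{\alpha_i}$ and, crucially, the identity $\pi_1 + \pi_2 = b \tensor \MPf{\alpha_1 + \alpha_2}$ holding already in $I_q^n F$ (not only modulo $I_q^{n+1}F$). This is the asymmetry of characteristic~$2$: the additive identity $\MPf{\alpha_1} + \MPf{\alpha_2} \sim \MPf{\alpha_1 + \alpha_2}$ is an equality in $I_q^1 F$, and after tensoring with $b$ it is preserved in $I_q^n F$.

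In particular the pair $\{\pi_1,\pi_2\}$ is (strongly) tight, so by \cite[Proposition 24.5]{EKM} it is right-linked. Unpacked: the equality above says that $\pi_1 \perp \pi_2$ is Witt equivalent to a single $n$-fold Pfister form, hence its anisotropic part has dimension at most~$2^n$; this gives the Witt-index estimate $i_W(\pi_1 \perp \pi_2) \geq \frac{1}{2}(2^{n+1} - 2^n) = 2^{n-1}$, which is the criterion for extracting a common quadratic $(n-1)$-fold right factor.

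There is essentially no obstacle once \Pref{3.5} is in hand: the corollary merely records that the char~$2$ additivity of quadratic $1$-fold Pfister forms lifts from the quotient $I_q^1 F / I_q^2 F$ to $I_q^1 F$ itself, and hence, via tensoring with $b$, all the way to $I_q^n F$. The only ingredient outside the paper is the direction of \cite[Proposition 24.5]{EKM} saying that $\pi_1 + \pi_2 \in I_q^n F$ being representable by an $n$-fold Pfister form (equivalently $i_W(\pi_1 \perp \pi_2) \geq 2^{n-1}$) forces a common quadratic $(n-1)$-fold right divisor; this is precisely the fact already used in \Sref{sec:3}.
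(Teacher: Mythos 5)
Your argument is correct, and it is worth noting that the paper itself offers no proof of this statement: it is imported verbatim from Faivre's thesis as \cite[Corollary 2.1.4]{Faivre:thesis}, so any comparison is really with Faivre's argument (and with the paper's later aside that $\Alb' \sub \Alb$ reproves the implication for $n=2$ via Albert forms). Your route---left-linked $\Rightarrow$ strongly tight by \Pref{3.5}, hence $\pi_1 \perp \pi_2$ is Witt equivalent to the single Pfister form $b \tensor \MPf{\alpha_1+\alpha_2}$, hence $i_W(\pi_1 \perp \pi_2) \geq 2^{n-1}$, hence a common quadratic $(n-1)$-fold right divisor by \cite[Prop.~24.5]{EKM}---is a clean self-contained derivation from ingredients already present in Sections~\ref{sec:2} and~\ref{sec:3}. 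In fact your ``unpacked'' version is slightly stronger than invoking tightness alone: you get the Witt-index bound directly in $I_q^nF$ rather than modulo $I_q^{n+1}F$, so you only need the easy direction of the linkage criterion for an honest Witt-index inequality, not the subtler statement about Pfister representatives of classes mod $I_q^{n+1}F$. Two small points to keep in view: \Pref{3.5} is proved only in characteristic~$2$, which is the standing hypothesis of \Sref{LeftRight} where the corollary appears (in characteristic not~$2$ the statement is vacuous since the two notions of linkage coincide); and Faivre's own proof runs instead through the Witt-index dichotomy recorded in \Tref{5.2} and its companion \cite[Corollary 2.5.6]{Faivre:thesis}, comparing $i_W(\phi' \perp \psi')$ for the pure subforms rather than $i_W(\phi \perp \psi)$ for the forms themselves---so your approach genuinely differs in that it never touches pure subforms.
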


However, there are pairs of right-linked forms which are not left-linked, \cite[Proposition 2.5.1]{Faivre:thesis}.

If $b$ is a bilinear $n$-fold Pfister form, the pure subform is the unique form $b'$ such that $b = \Qf{1} \perp b'$. For a quadratic Pfister form $\varphi$, the pure subform is $\varphi' = \psi \perp \Qf{1}$, where $\psi$ is the unique nonsingular quadratic form such that $\varphi \perp \Qf{1} \isom \HQ \perp \psi \perp \Qf{1}$. It follows that if $\varphi = b \tensor \MPf{\beta,\alpha}$, then
$$\varphi' = b' \tensor \MPf{\alpha} \perp b \tensor \Qf{\beta} \tensor \MPf{\alpha} \perp  \Qf{1}.$$

\begin{thm}[{\cite[Theorem 2.5.5]{Faivre:thesis}}]\label{5.2}
Two $n$-fold Pfister forms $\phi$ and $\psi$ are left-linked if and only if $i_W(\phi' \perp \psi') \geq 2^{n-1}-1$.
\end{thm}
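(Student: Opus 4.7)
I would prove the two directions of the equivalence separately.

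\textbf{Forward direction} (left-linked implies $i_W(\phi'\perp\psi')\geq 2^{n-1}-1$). Write $\phi=\tilde b\otimes\MPf{\alpha}$ and $\psi=\tilde b\otimes\MPf{\alpha'}$ with $\tilde b$ a common bilinear $(n-1)$-fold Pfister left divisor, and decompose $\tilde b=\Qf{1}\perp\tilde b'$ with $\dim\tilde b'=2^{n-1}-1$. Applying the pure-subform formula displayed just before the theorem (after splitting $\tilde b=b\otimes\Pf{\beta}$ with $b$ an $(n-2)$-fold bilinear Pfister) one obtains
\[
\phi'\simeq\tilde b'\otimes\MPf{\alpha}\perp\Qf{1},\qquad\psi'\simeq\tilde b'\otimes\MPf{\alpha'}\perp\Qf{1}.
\]
The characteristic-$2$ Witt equivalence $\MPf{\alpha}\perp\MPf{\alpha'}\sim\MPf{\alpha+\alpha'}$ used in \Pref{3.5} lifts, by a dimension count, to an actual isometry $\MPf{\alpha}\perp\MPf{\alpha'}\simeq\HQ\perp\MPf{\alpha+\alpha'}$. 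Tensoring with $\tilde b'$ and collecting the two copies of $\Qf{1}$ gives
\[
\phi'\perp\psi'\simeq\bigl(\tilde b'\otimes\HQ\bigr)\perp\bigl(\tilde b'\otimes\MPf{\alpha+\alpha'}\bigr)\perp\Qf{1,1}.
\]
The first summand is totally hyperbolic of dimension $2(2^{n-1}-1)$, contributing Witt index $2^{n-1}-1$ to the nonsingular part of $\phi'\perp\psi'$, which is the required bound.

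\textbf{Converse} (Witt-index bound implies left-linked). Starting from $i_W(\phi'\perp\psi')\geq 2^{n-1}-1$, I would fix a totally isotropic subspace $W$ of dimension $2^{n-1}-1$ in the nonsingular part of $\phi'\perp\psi'$. Since we may assume $\phi$ and $\psi$ are anisotropic (otherwise both are hyperbolic and left-linkage is trivial), the nonsingular parts of $\phi'$ and $\psi'$ are themselves anisotropic, so the two projections $W\to\phi'$ and $W\to\psi'$ are injective. Thus $W$ encodes an isometric embedding of a common $(2^{n-1}-1)$-dimensional subform into both $\phi'$ and $\psi'$; the goal is to promote this common subform to a bilinear $(n-1)$-fold Pfister form left-dividing both $\phi$ and $\psi$.

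The tools I expect to need for the converse are (i) roundness of quadratic Pfister forms in characteristic~$2$, which lets me normalise a common represented value to $1$ and so align the two pure subforms; (ii) the formula $\phi'\simeq\tilde b'\otimes\MPf{\alpha}\perp\Qf{1}$ derived above, which makes explicit how a bilinear left factor of $\phi$ contributes to $\phi'$ (and which, read in reverse, lets one detect such a factor from a large enough common piece of $\phi'$); and (iii) Witt cancellation adapted to the quasilinear/nonsingular decomposition of singular quadratic forms in characteristic~$2$. The main obstacle is exactly this converse: the hypothesis is additive, a statement about the Witt index of an orthogonal sum, while the conclusion is multiplicative, asserting a shared bilinear Pfister left divisor. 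Bridging the two amounts to showing that the common $(2^{n-1}-1)$-dimensional totally isotropic datum cannot arise accidentally but must descend from a genuine shared Pfister structure, and this is where roundness and the explicit pure-subform formula must do the real work.
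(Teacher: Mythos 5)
This statement is imported by the paper verbatim from Faivre's thesis (\cite[Theorem 2.5.5]{Faivre:thesis}); the paper gives no proof of it, so there is no internal argument to compare against, and your proposal has to stand on its own. Your forward direction does stand: the computation $\phi'\perp\psi'\simeq \tilde b'\otimes\HQ\perp\tilde b'\otimes\MPf{\alpha+\alpha'}\perp\Qf{1,1}$, resting on the isometry $\MPf{\alpha}\perp\MPf{\alpha'}\simeq\HQ\perp\MPf{\alpha+\alpha'}$ in characteristic $2$, is correct and is essentially the same manipulation the paper itself performs when it applies this theorem in the proof of its Theorem~\ref{twoforms}.

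The converse, however, is a genuine gap: you have written a plan, not a proof. Exhibiting a totally isotropic subspace $W$ of dimension $2^{n-1}-1$ and observing that it projects injectively into $\phi'$ and $\psi'$ only yields a common $(2^{n-1}-1)$-dimensional subform of the two pure parts, and nothing in your sketch rules out that this common piece arises ``accidentally'' rather than from a shared bilinear $(n-1)$-fold Pfister left divisor --- you say yourself that this is ``where roundness and the explicit pure-subform formula must do the real work,'' but that work is precisely the content of Faivre's theorem and it is absent. Note also that already for $n=2$ the converse is Draxl's theorem that a common value of the pure parts of two quaternion norm forms produces a common inseparable quadratic subfield, which requires an actual argument (a multiplicativity/roundness computation identifying the common slot), not just a dimension count; for general $n$ Faivre's proof runs an induction on the fold using the subform and representation theory of quasilinear parts in characteristic $2$. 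Until you supply the mechanism that converts the common subform of $\phi'\perp\psi'$ into a common bilinear Pfister factor, the ``if'' direction --- which is the direction the paper actually uses in Theorem~\ref{twoforms} to deduce left-linkage from hyperbolicity of $\pi$ --- remains unproved.
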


Using this criterion of Faivre for left-linkage of $n$-fold Pfister forms, we can give a criterion for left-linkage of right-linked forms:
\begin{thm}\label{twoforms}
Two right-linked $n$-fold Pfister forms
$$\varphi = \MPf{\beta,\alpha_1,\dots,\alpha_{n-1}}, \qquad \psi=\MPf{\gamma,\alpha_1,\dots,\alpha_{n-1}}$$
are left-linked if and only if the $(n+1)$-fold Pfister form
$\pi = \MPf{\gamma,\beta,\alpha_1,\dots,\alpha_{n-1}}$ is hyperbolic.
\end{thm}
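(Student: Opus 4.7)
The plan is to compute $\Sigma_S$ for $S=\{\varphi,\psi\}$ explicitly and identify it with the form $\pi$; the forward implication then follows from \Pref{3.5} together with \Pref{down}, and the reverse from Faivre's criterion \Tref{5.2}.

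Setting $\theta=\MPf{\alpha_1,\dots,\alpha_{n-1}}$, write $\varphi=\Pf{\beta}\otimes\theta$ and $\psi=\Pf{\gamma}\otimes\theta$. By \Pref{3.2}, the third nontrivial coset of the group they generate in $I_q^nF/I_q^{n+1}F$ is represented by $\Pf{\beta\gamma}\otimes\theta$. In the Witt ring $WF$ the identity
$$\Pf{\beta}+\Pf{\gamma}=\Pf{\beta\gamma}+\Pf{\beta,\gamma}$$
holds in characteristic~$2$, because the discrepancy $\langle\beta\gamma,\beta\gamma\rangle=\beta\gamma\cdot\langle 1,1\rangle$ is metabolic. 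Tensoring by $\theta$ gives $\varphi+\psi=\Pf{\beta\gamma}\otimes\theta+\pi$ in $I_q^1F$, and since $I_q^1F$ has exponent~$2$ in characteristic~$2$ I may rearrange this to
$$\Sigma_S=\varphi+\psi+\Pf{\beta\gamma}\otimes\theta=\pi.$$
Consequently, if $\varphi$ and $\psi$ are left-linked then $S$ is strongly tight by \Pref{3.5}, so $\Sigma_S=0$ by \Pref{down}(2); since an isotropic Pfister form is hyperbolic, $\pi$ itself is hyperbolic.

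For the converse, suppose $\pi$ is hyperbolic, i.e.\ $\pi=0$ in $I_q^1F$. Then $\varphi\perp\psi$ is Witt equivalent to the $n$-fold form $\Pf{\beta\gamma}\otimes\theta$, whose anisotropic part has dimension at most $2^n$, forcing $i_W(\varphi\perp\psi)\geq 2^{n-1}$. Writing $\varphi\simeq\HQ\perp\psi_\varphi$ and $\psi\simeq\HQ\perp\psi_\psi$ with $\varphi'=\psi_\varphi\perp\Qf{1}$ and $\psi'=\psi_\psi\perp\Qf{1}$ as recalled just before \Tref{5.2}, splitting off $2\HQ$ from both sides yields $i_W(\psi_\varphi\perp\psi_\psi)\geq 2^{n-1}-2$. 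Then $\varphi'\perp\psi'\simeq\psi_\varphi\perp\psi_\psi\perp\Qf{1,1}$, and the vector $(1,1)$ is isotropic in the quasi-linear part $\Qf{1,1}$ because $1+1=0$; adjoining it to a totally isotropic subspace of $\psi_\varphi\perp\psi_\psi$ of dimension $2^{n-1}-2$ produces one of dimension $2^{n-1}-1$ in $\varphi'\perp\psi'$. Faivre's \Tref{5.2} then gives left-linkage.

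The most delicate step will be the last one: the pure subforms $\varphi'$ and $\psi'$ are singular, so Witt cancellation for nonsingular forms does not apply directly, and the argument hinges on the fact that $\Qf{1,1}$ is isotropic (but not hyperbolic) in characteristic~$2$, which is precisely what supplies the extra $+1$ in the Witt index needed to meet Faivre's threshold of $2^{n-1}-1$.
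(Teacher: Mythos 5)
Your forward implication is sound and takes a genuinely different route from the paper: the paper proves both directions simultaneously via Faivre's criterion, whereas you derive ``left-linked $\Rightarrow$ $\pi$ hyperbolic'' from the identity $\Sigma_S=\pi$ (which agrees with \Pref{sep} for $s=2$) together with \Pref{3.5} and \Pref{down}. The only caveat there is degenerate cases: \Pref{down}(2) is proved for a \emph{noncyclic} group, so you should separately dispose of the case $\varphi\isom\psi$ (where the generated group has order $\le 2$); this case is trivial since then $\gamma$ is a similarity factor of $\varphi$ and $\pi=\varphi\perp\gamma\varphi$ is hyperbolic.

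The converse, however, has a genuine gap at exactly the step you flag as delicate. The isotropic vector of $\Qf{1,1}\isom\Qf{1,0}$ lies in the radical of the polar form, so adjoining it to a totally isotropic subspace of $\psi_\varphi\perp\psi_\psi$ contributes a $\Qf{0}$ summand in the decomposition $i\times\HQ\perp j\times\Qf{0}\perp(\text{anisotropic})$; it raises $j$, not the Witt index $i_W$, which counts hyperbolic planes only. (This is why the paper, after writing $\varphi'\perp\psi'=(2^{n-1}-2)\times\HQ\perp\Qf{\beta,\gamma}\tensor b\tensor\MPf{\alpha_{n-1}}\perp\Qf{1,0}$, discards the $\Qf{0}$ and keeps the $\Qf{1}$ attached to the nonsingular part.) A structural reason the argument cannot work as written: the only consequence of ``$\pi$ hyperbolic'' you actually use is $i_W(\varphi\perp\psi)\ge 2^{n-1}$, but this holds for \emph{every} right-linked pair, since $\varphi\perp\psi\sim\Qf{\beta,\gamma}\tensor\MPf{\alpha_1,\dots,\alpha_{n-1}}$ has anisotropic dimension at most $2^n$ unconditionally; so your argument, if valid, would prove that right-linkage implies left-linkage, contradicting \cite[Proposition 2.5.1]{Faivre:thesis} and Example~\ref{counter3.6}. (There is also the smaller misquotation $\varphi\simeq\HQ\perp\psi_\varphi$, which would force $\varphi$ to be isotropic; the correct relation is $\varphi\perp\Qf{1}\isom\HQ\perp\psi_\varphi\perp\Qf{1}$.) The hyperbolicity of $\pi$ must instead be used to make the $(2^n+1)$-dimensional form $\Qf{1}\perp\Qf{\beta,\gamma}\tensor b\tensor\MPf{\alpha_{n-1}}$ isotropic --- it is a Pfister neighbour of $\pi$ --- and an isotropic vector there meets the nonsingular part nontrivially, hence spans a genuine hyperbolic plane and supplies the missing $+1$. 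That is precisely the paper's argument.
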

\begin{proof}
Write $b = \Pf{\alpha_1,\dots,\alpha_{n-2}}$, so that $\varphi = b \tensor \MPf{\beta,\alpha_{n-1}}$ and $\psi = b \tensor \MPf{\gamma,\alpha_{n-1}}$. By definition $$\varphi' = b' \tensor \MPf{\alpha_{n-1}} \perp b \tensor \Qf{\beta} \tensor \MPf{\alpha_{n-1}} \perp  \Qf{1}$$
and $$\psi' = b' \tensor \MPf{\alpha_{n-1}} \perp b \tensor \Qf{\gamma} \tensor \MPf{\alpha_{n-1}} \perp  \Qf{1};$$
since 
$\Qf{1} \perp \Qf{1} \isom \Qf{1} \perp \Qf{0}$, we have that $$\varphi' \perp \psi' = (2^{n-1}-2) \times \HQ \perp \Qf{\beta,\gamma} \tensor b  \tensor \MPf{\alpha_{n-1}} \perp  \Qf{1,0},$$
and so the Witt index $i_W(\varphi' \perp \psi')$ is equal to $2^{n-1}-2$ plus the Witt index of the form $\theta = \Qf{1} \perp \Qf{\beta,\gamma} \tensor b  \tensor \MPf{\alpha_{n-1}}$. By Faivre's criterion, $\varphi$ and $\psi$ are left-linked if and only if $\HQ \sub \theta$.

But since $\theta \sub \Qf{1,\beta,\gamma,\beta \gamma} \tensor b  \tensor \MPf{\alpha_{n-1}} \isom \pi$, $\theta$ is a Pfister neighbor of $\pi$, and thus $\theta$ is isotropic if and only if $\pi$ is hyperbolic.

\end{proof}

\forget 
\begin{thm}\label{twoforms}
Two right-linked $2$-fold Pfister forms
$$\phi = \MPf{\beta,\alpha_1}, \qquad \psi=\MPf{\gamma,\alpha_1}$$
are left-linked if and only if the $3$-fold Pfister form
$\MPf{\gamma,\beta,\alpha_1}$ is hyperbolic.
\end{thm}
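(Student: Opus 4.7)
The plan is to apply Faivre's criterion (Theorem 5.2), which converts left-linkage of $\varphi$ and $\psi$ into the Witt-index inequality $i_W(\varphi' \perp \psi') \geq 2^{n-1} - 1$. Writing $b = \Pf{\alpha_1, \dots, \alpha_{n-2}}$, we have $\varphi = b \tensor \MPf{\beta, \alpha_{n-1}}$ and $\psi = b \tensor \MPf{\gamma, \alpha_{n-1}}$, so the task is to compute and simplify $\varphi' \perp \psi'$ and then feed the result back into this inequality.

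First I would invoke the pure-subform formula recalled just before Theorem 5.2 to write
$$\varphi' \;\isom\; b' \tensor \MPf{\alpha_{n-1}} \perp b \tensor \Qf{\beta} \tensor \MPf{\alpha_{n-1}} \perp \Qf{1},$$
and similarly for $\psi'$ with $\gamma$ in place of $\beta$. Summing, the two identical $b' \tensor \MPf{\alpha_{n-1}}$ blocks collapse hyperbolically: in characteristic~$2$ the identity $\MPf{\alpha} \perp \MPf{\alpha} \sim \MPf{0} \isom \HQ$ from Proposition 3.5 forces any doubled form of shape (bilinear)$\,\tensor \MPf{\cdot}$ to be hyperbolic. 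The two $\Qf{1}$ summands combine via $\Qf{1,1} \isom \Qf{1,0}$, which should give
$$\varphi' \perp \psi' \;\isom\; (2^{n-1} - 2)\,\HQ \;\perp\; \Qf{\beta,\gamma} \tensor b \tensor \MPf{\alpha_{n-1}} \;\perp\; \Qf{1,0}.$$

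Feeding this back into Faivre's inequality and peeling off the $(2^{n-1}-2)$ visible hyperbolic planes together with the totally singular $\Qf{0}$ summand, the left-linkage condition should reduce to the isotropy of the nonsingular form $\theta = \Qf{1} \perp \Qf{\beta,\gamma} \tensor b \tensor \MPf{\alpha_{n-1}}$. Finally, I would observe that $\theta$ sits inside
$$\pi \;=\; \Pf{\beta,\gamma} \tensor b \tensor \MPf{\alpha_{n-1}} \;=\; \MPf{\gamma,\beta,\alpha_1,\dots,\alpha_{n-1}},$$
with $\dim \theta = 2^n + 1 > \tfrac12 \dim \pi$, which exhibits $\theta$ as a Pfister neighbor of $\pi$. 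The standard fact that a Pfister neighbor is isotropic exactly when the ambient Pfister form is hyperbolic then closes the equivalence.

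The most delicate step is the bookkeeping around the singular piece: one must verify that the radical summand $\Qf{0}$ does not artificially inflate the Witt index of $\varphi' \perp \psi'$, and confirm that the passage from the inequality on this singular form to plain isotropy of the nonsingular Pfister neighbor $\theta$ is genuinely an equivalence in both directions.
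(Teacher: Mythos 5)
Your proposal is correct and follows essentially the same route as the paper: Faivre's criterion, the pure-subform computation collapsing the doubled $b'\tensor\MPf{\alpha_{n-1}}$ blocks and the two $\Qf{1}$'s, and the reduction to the Pfister neighbor $\theta=\Qf{1}\perp\Qf{\beta,\gamma}\tensor b\tensor\MPf{\alpha_{n-1}}$ of $\pi$. The only cosmetic difference is that you argue the general $n$-fold case (which specializes to the stated $n=2$ case with $b=\Qf{1}$ and $b'$ empty), and the paper's dedicated $2$-fold proof writes out the final equivalence ``$\theta$ isotropic iff $\pi$ hyperbolic'' in both directions explicitly rather than citing the Pfister-neighbor fact.
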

\begin{proof}
Write $b = \Qf{1}$, so that $\varphi = b \tensor \MPf{\beta,\alpha_{1}}$ and $\psi = b \tensor \MPf{\gamma,\alpha_{1}}$. By definition $$\varphi' =  \Qf{\beta} \tensor \MPf{\alpha_{n-1}} \perp  \Qf{1}$$
and $$\psi' =  \Qf{\gamma} \tensor \MPf{\alpha_{1}} \perp  \Qf{1};$$
since 
$\Qf{1} \perp \Qf{1} \isom \Qf{1} \perp \Qf{0}$, we have that $$\varphi' \perp \psi' = \Qf{\beta,\gamma} \tensor \MPf{\alpha_{1}} \perp  \Qf{1,0},$$
and so the Witt index $i_W(\varphi' \perp \psi')$ is equal to the Witt index of the form $\theta = \Qf{\beta,\gamma} \tensor \MPf{\alpha_{1}} \perp  \Qf{1}$. By Faivre's criterion, $\varphi$ and $\psi$ are left-linked if and only if $\HQ \sub \theta$.

Since $\Qf{1} \sub \MPf{\alpha_{1}}$, we have that $\theta = \Qf{1} \perp \Qf{\beta,\gamma} \tensor \MPf{\alpha_{1}} \sub \Qf{1,\beta,\gamma,\beta\gamma} \tensor \MPf{\alpha_{1}}$, so if $\HQ \sub \theta$, the Pfister form $\Pf{\beta,\gamma} \tensor  \MPf{\alpha_{1}}$ is isotropic and thus hyperbolic.

On the other hand if $\Pf{\beta,\gamma} \tensor \MPf{\alpha_{1}}$ is hyperbolic, then $\Qf{\beta,\gamma} \otimes \MPf{\alpha_{1}} \simeq \Qf{1,\beta\gamma}\otimes \MPf{\alpha_{1}}$.
Adding $\Qf{1}$ to both forms, we get $\theta \simeq
\Qf{1,\beta\gamma} \tensor \MPf{\alpha_{1}} \perp \Qf{1}$ which is obviously isotropic.
\end{proof}
\forgotten


\begin{cor}
Suppose $I_q^{n+1} F=0$. Then two $n$-fold Pfister forms over $F$ are right-linked if and only if they are left-linked.
\end{cor}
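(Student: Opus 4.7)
The plan is to split into two cases by characteristic. When $\operatorname{char}(F) \neq 2$, the remark after \Dref{lrlink} says that left- and right-linkage coincide unconditionally via the quadratic/bilinear correspondence, so the statement is trivial. It remains to treat the characteristic~$2$ case.

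In characteristic~$2$, the implication \emph{left-linked $\Rightarrow$ right-linked} holds without any hypothesis on the field, by Faivre's Corollary~2.1.4 recalled at the start of \Sref{LeftRight}. So the only direction requiring the assumption $I_q^{n+1}F = 0$ is \emph{right-linked $\Rightarrow$ left-linked}.

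For this direction, suppose $\varphi$ and $\psi$ are right-linked $n$-fold Pfister forms. By the definition of right-linkage they share a common quadratic $(n-1)$-fold Pfister right divisor, which we may write as $\MPf{\alpha_1,\dots,\alpha_{n-1}}$; consequently $\varphi = \MPf{\beta,\alpha_1,\dots,\alpha_{n-1}}$ and $\psi = \MPf{\gamma,\alpha_1,\dots,\alpha_{n-1}}$ for some $\beta,\gamma \in \mul{F}$, which is precisely the form needed to apply \Tref{twoforms}. Form the associated $(n+1)$-fold Pfister form $\pi = \MPf{\gamma,\beta,\alpha_1,\dots,\alpha_{n-1}}$; by construction $\pi \in I_q^{n+1}F$, and the hypothesis $I_q^{n+1}F = 0$ forces the Witt class of $\pi$ to vanish. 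Since a Pfister form is hyperbolic precisely when its Witt class is trivial (equivalently, when it is isotropic, as recorded in \Sref{sec:2}), $\pi$ is hyperbolic, and \Tref{twoforms} then gives that $\varphi$ and $\psi$ are left-linked.

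There is no real obstacle here: the corollary is a direct specialization of \Tref{twoforms}, with the only small point being to observe that any right-linked pair of $n$-fold Pfister forms can be put in the explicit shape demanded by the hypothesis of that theorem.
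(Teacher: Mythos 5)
Your proof is correct and is exactly the intended argument: the corollary is stated in \Sref{LeftRight} (where $\mychar F = 2$ is assumed throughout) as an immediate consequence of \Tref{twoforms} together with Faivre's result that left-linkage implies right-linkage, since the $(n+1)$-fold Pfister form $\pi$ lies in $I_q^{n+1}F = 0$ and is therefore hyperbolic. The only (harmless) redundancy is your separate treatment of the case $\mychar F \neq 2$, which the section's standing hypothesis already excludes.
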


Fix $s \geq 3$. We give an example of a right-linked set $S$ (of size $s$), which is not pairwise left-linked, despite the fact that $\Sigma_S = 0$:
\begin{exmpl}\label{counter3.6}{\rm
Let $F=k(\beta_1,\dots,\beta_s,\alpha_1,\dots,\alpha_{n-1})$ be the function field in $n+s-1$ algebraically independent variables over a field $k$ of characteristic~$2$.
Let $S$ be the set of the $n$-fold Pfister forms $\MPf{\beta_i,\alpha_1,\dots,\alpha_{n-1}}$ where $i = 1,\dots,s$; it is right-linked, and therefore tight. By \Pref{sep}, $\Sigma_S$ is the Witt class of the generic $(n+s)$-fold Pfister form $\sigma =\MPf{\beta_1,\dots,\beta_s,\alpha_1,\dots,\alpha_n}$.

\sloppy Let $K$ be the function field of the form $\sigma$.
Consider any two Pfister forms $\MPf{\beta,\alpha_1,\dots,\alpha_{n-1}}$ and $\MPf{\beta', \alpha_1,\dots,\alpha_{n-1}}$, where $\beta$ and $\beta'$ are distinct products of subsets of the $\beta_i$.
The form $\MPf{\beta,\beta',\alpha_1,\dots,\alpha_{n-1}}$ is not split by $K$
(see \cite[Theorem 1.1]{HoffmannLaghribi2006}), so by \Tref{twoforms}, the two forms are not left-linked over $K$.

Therefore, by restricting everything to $K$, we obtain a right-linked set of forms with $\Sigma = 0$, where no two forms are left-linked.}
\end{exmpl}

\section{Tightness and strong tightness of pairs}\label{sec:6}

In this section we assume $S = \set{\varphi_1,\varphi_2}$ is a set of two quadratic $n$-fold Pfister forms, and compare the notions of left- and right- linkage with tightness and strong tightness.

\begin{prop}\label{2LL}
The pair $\varphi_1,\varphi_2$ is right-linked if and only if it is tight.
\end{prop}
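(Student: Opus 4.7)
The plan is to prove both implications, leaning entirely on results already available in the paper.

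One direction is immediate. The implication ``right-linked $\Longrightarrow$ tight'' is nothing but Corollary~\ref{rl->t}, established via Proposition~\ref{3.2}: if $\varphi_1 = \Pf{\beta_1} \tensor \varphi$ and $\varphi_2 = \Pf{\beta_2} \tensor \varphi$ share a common $(n-1)$-fold Pfister factor $\varphi$, then the only other nontrivial class in the subgroup they generate in $I_q^n F / I_q^{n+1} F$ is represented by the Pfister form $\Pf{\beta_1 \beta_2} \tensor \varphi$, so the pair is tight.

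For the converse, suppose $\{\varphi_1,\varphi_2\}$ is tight. Since the subgroup of $I_q^n F/I_q^{n+1}F$ generated by the pair has only one nontrivial class beyond those of $\varphi_1$ and $\varphi_2$, tightness furnishes an $n$-fold Pfister form $\pi$ with $\varphi_1 + \varphi_2 \equiv \pi \pmod{I_q^{n+1}F}$, equivalently
$$\varphi_1 \perp \varphi_2 \perp \pi \;\in\; I_q^{n+1}F.$$
This is exactly the hypothesis of \cite[Proposition~24.5]{EKM}, already invoked in Section~\ref{sec:3}: three $n$-fold Pfister forms whose orthogonal sum lies in $I_q^{n+1}F$ must share a common $(n-1)$-fold Pfister factor. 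Applying it yields a common $(n-1)$-fold factor for $\varphi_1$, $\varphi_2$, and $\pi$; in particular $\varphi_1$ and $\varphi_2$ are right-linked.

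There is no real obstacle here: the two-element case is special precisely because the generated subgroup contains only one nontrivial ``sum'' to analyze, so the problem reduces cleanly to the three-Pfister statement of \cite[Prop.~24.5]{EKM}. The delicate phenomena in this paper (e.g.\ the example in Section~\ref{quaternion} of a tight triplet with nontrivial invariant) arise precisely because for $|S| \geq 3$ there are several nontrivial classes to represent simultaneously, and tightness no longer forces right-linkage.
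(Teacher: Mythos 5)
Your proof is correct and follows essentially the same route as the paper: the forward direction is Corollary~\ref{rl->t} (via Proposition~\ref{3.2}), and the converse is exactly the appeal to \cite[Prop.~24.5]{EKM} that the paper quotes in Section~\ref{sec:3} to conclude that any two forms in a tight set are right-linked. Your added remarks about why the two-element case is special are accurate but not needed for the argument.
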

\begin{proof}
This is \Cref{rl->t} and \cite[Prop.~24.5]{EKM} which was quoted earlier.
\end{proof}

\begin{prop}\label{2RL}
Let $S = \set{\varphi_1,\varphi_2}$ and assume $F$ has characteristic~$2$. The following are equivalent:
\begin{enumerate}
\item $S$ is left-linked.
\item $S$ is strongly tight.
\item $S$ is tight and $\Sigma_S = 0$.
\end{enumerate}
\end{prop}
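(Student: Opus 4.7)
The plan is to prove the cycle $(1) \Rightarrow (2) \Rightarrow (3) \Rightarrow (1)$. The implication $(1) \Rightarrow (2)$ is exactly \Pref{3.5}: in characteristic~$2$, a left-linked set of $n$-fold Pfister forms is strongly tight. The implication $(2) \Rightarrow (3)$ is immediate from the definitions together with \Pref{down}, since strongly tight sets are tight and satisfy $\Sigma_S = 0$.

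For the nontrivial direction $(3) \Rightarrow (1)$, the approach is to invoke \Pref{2LL} (a tight pair is right-linked), to normalize the forms as $\varphi_1 = \MPf{\beta_1, \alpha_1, \dots, \alpha_{n-1}}$ and $\varphi_2 = \MPf{\beta_2, \alpha_1, \dots, \alpha_{n-1}}$, and then to identify $\Sigma_S$ with the $(n+1)$-fold form $\pi = \MPf{\beta_1, \beta_2, \alpha_1, \dots, \alpha_{n-1}}$ appearing in \Tref{twoforms}. The degenerate case in which $\varphi_1$ and $\varphi_2$ coincide in $I_q^n F / I_q^{n+1} F$ reduces by \Rref{unique} to $\varphi_1 \isom \varphi_2$, where left-linkage is automatic; otherwise the subgroup generated by $S$ modulo $I_q^{n+1} F$ has order $4$, its nontrivial third class being that of $\varphi_1 \perp \varphi_2$.

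The key computation uses the bilinear identity $\Pf{\beta_1} \perp \Pf{\beta_2} \sim \Pf{\beta_1 \beta_2} \perp \Pf{\beta_1, \beta_2}$ from the proof of \Pref{3.2}. Tensoring with $\MPf{\alpha_1, \dots, \alpha_{n-1}}$ yields
$$\varphi_1 \perp \varphi_2 \sim \MPf{\beta_1 \beta_2, \alpha_1, \dots, \alpha_{n-1}} \perp \pi$$
in $I_q^n F$. Since $\pi \in I_q^{n+1} F$, the unique Pfister representative of $\varphi_1 + \varphi_2$ modulo $I_q^{n+1}F$ must be $\MPf{\beta_1 \beta_2, \alpha_1, \dots, \alpha_{n-1}}$. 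Summing the Pfister representatives over the four classes of the subgroup, and using the characteristic-$2$ identity $\MPf{\alpha} \perp \MPf{\alpha} \sim \MPf{0} \sim 0$ to cancel the $n$-fold terms in pairs, one obtains $\Sigma_S = \pi$. Faivre's criterion \Tref{twoforms} then gives $\Sigma_S = 0$ if and only if $\pi$ is hyperbolic, if and only if $\varphi_1, \varphi_2$ are left-linked. The only obstacle is the bookkeeping needed to isolate the $(n+1)$-fold ``error term'' $\pi$ from the $n$-fold Pfister representative of $\varphi_1 + \varphi_2$; once this separation is made, the equivalence with Faivre's obstruction is immediate, and conceptually this shows that $\Sigma_S$ already encodes Faivre's invariant in the two-element case.
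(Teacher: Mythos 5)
Your proof is correct and takes essentially the same route as the paper's: $(1)\Rightarrow(2)$ via \Pref{3.5}, $(2)\Rightarrow(3)$ via \Pref{down}, and $(3)\Rightarrow(1)$ by right-linking the pair via \Pref{2LL}, identifying $\Sigma_S$ with the $(n+1)$-fold Pfister form $\pi$, and invoking \Tref{twoforms}. The only difference is that you carry out the $s=2$ computation of $\Sigma_S$ by hand (and separately dispose of the degenerate case $\varphi_1\isom\varphi_2$), where the paper simply cites the general computation of \Pref{sep}.
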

\begin{proof}
A left-linked set is strongly tight by \Pref{3.5}. A strongly tight set $S$ is tight and has $\Sigma_S = 0$ by \Pref{down}.2. It remains to show that if $S = \set{\varphi_1,\varphi_2}$ is a tight pair and $\Sigma_S = 0$ then $S$ is left-linked. By \Pref{2LL}, $S$ is right-linked, so we can write them as $\varphi_i = \Pf{\alpha_i} \tensor \psi$ for a quadratic Pfister form $\psi$. The invariant $\Sigma_S$ was computed in \Pref{sep} to be the Witt class of $\Pf{\alpha_1,\alpha_2} \tensor \psi$. By \Tref{twoforms}, $S$ is left-linked.
\end{proof}

This criterion can be generalized:
\begin{prop}\label{cpll}
When $\mychar F = 2$, the following are equivalent for a tight set $S$ of $n$-fold Pfister forms.
\begin{enumerate}
\item Every two Pfister forms representing elements of the group generated by $S$ in $I_q^nF/I_q^{n+1}F$ are left-linked;
\item $S$ is strongly tight.
\end{enumerate}
\end{prop}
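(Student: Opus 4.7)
The plan is to reduce both implications to the pair case handled by \Pref{2RL}, using the fact that subsets of strongly tight sets are strongly tight for one direction and a short induction exploiting the characteristic~$2$ additivity $\MPf{\alpha}+\MPf{\alpha'}\sim\MPf{\alpha+\alpha'}$ for the other.

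For $(2)\Rightarrow(1)$, I would first note that when $S$ is strongly tight, the unique Pfister representatives of the nonzero elements of the image of $\langle S\rangle$ in $I_q^nF/I_q^{n+1}F$ are precisely the nonzero elements of $\langle S\rangle\subseteq I_q^nF$ themselves: given such $g\in\langle S\rangle$, strong tightness makes $g$ an $n$-fold Pfister form, and by \Rref{unique} it is the Pfister representative of its own class. Thus any two Pfister representatives $\pi_1,\pi_2$ appearing in the hypothesis of~(1) already lie in $\langle S\rangle$, so $\pi_1+\pi_2\in\langle S\rangle$ is either zero or again a Pfister form; that is, the pair $\{\pi_1,\pi_2\}$ is itself strongly tight. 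Applying \Pref{2RL} to this pair yields the left-linkage of $\pi_1$ and $\pi_2$.

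For $(1)\Rightarrow(2)$, I would induct on $|T|$, with $T$ ranging over subsets of (the indexing set of) $S$, to show that $\sum_{\varphi\in T}\varphi$ is either zero or an $n$-fold Pfister form in $I_q^nF$. The cases $|T|\leq 1$ are trivial. For $|T|\geq 2$, peel off one summand $\varphi_j$, so that $\sum_{\varphi\in T}\varphi=\psi+\varphi_j$ where, by induction, $\psi$ is zero or a Pfister form. If $\psi=0$ there is nothing to do; otherwise $\psi$ is a nonzero $n$-fold Pfister form lying in $\langle S\rangle$, its class in $I_q^nF/I_q^{n+1}F$ is nonzero (by the Arason--Pfister Hauptsatz), and by \Rref{unique} it is the unique Pfister representative of that class. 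Hypothesis~(1) then provides a bilinear $(n-1)$-fold Pfister form~$b$ and $\alpha,\alpha'\in F$ with $\psi = b\tensor\MPf{\alpha}$ and $\varphi_j = b\tensor\MPf{\alpha'}$, and the characteristic-$2$ identity invoked in the proof of \Pref{3.5} rewrites $\psi+\varphi_j$ as $b\tensor\MPf{\alpha+\alpha'}$, which is zero or a Pfister form.

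The delicate point is this inductive step: it is essential that hypothesis~(1) apply not only to pairs of original elements of $S$ but also to pairs consisting of a partial Pfister sum and the next summand $\varphi_j$. This is precisely why the hypothesis is phrased in terms of \emph{all} Pfister representatives of the subgroup generated by $S$ modulo $I_q^{n+1}F$, rather than mere pairwise left-linkage of the elements of $S$ themselves.
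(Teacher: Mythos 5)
Your proof is correct and takes essentially the same route as the paper's: the direction $(2)\Rightarrow(1)$ reduces to \Pref{2RL} exactly as in the paper, and your induction on the number of summands in $(1)\Rightarrow(2)$ is a repackaging of the paper's direct argument that the Pfister representatives $\varphi_g$, $g\in G$, form a subgroup of $I_q^nF$, both resting on the same key fact that a left-linked pair is strongly tight (\Pref{3.5}) so that the sum of the pair is again a Pfister form. The only cosmetic difference is that the paper applies this to arbitrary pairs of representatives at once rather than peeling off one element of $S$ at a time.
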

\begin{proof}
Let $G$ denote the group generated by $S$ in $I_q^nF/I_q^{n+1}F$. Since $S$ is tight, for every $g \in G$ there is a (unique) quadratic $n$-fold Pfister form $\varphi_g \in g + I_q^{n+1}F$.

Assume $S$ is strongly tight. Let $g,g' \in G$. Clearly $\set{\varphi_g,\varphi_{g'}}$ is strongly tight, so by \Pref{2RL} they are left-linked. On the other hand, assume every $\varphi_g,\varphi_{g'}$ are left-linked. Let $g,g' \in G$ be distinct elements; then by \Pref{3.5} the pair $\varphi_g,\varphi_{g'}$ is strongly tight, and so $\varphi_{g+g'} = \varphi_g + \varphi_{g'}$ in $I_q^nF$. Since $\varphi_g$ has order $2$ in $I_q^nF$, we proved that $\set{\varphi_g \suchthat g \in G}$ is a subgroup of $I_q^nF$.
\end{proof}

\section{From tightness to strong tightness through invariants}\label{sec:7}

The notions of tightness and strong tightness, defined above for sets of quadratic Pfister forms, can be easily treated in  a general setting. The application to Pfister forms is transparent.

Let $V$ be a vector space over the field of two elements, $U \sub V$ a subspace, and $P \sub V$ a designated spanning set of $V$, with $0 \in P$, such that every coset of $V/U$ contains at most one element of $P$ (in our context $P$ is the set of quadratic $n$-fold Pfister forms, $V = I_q^nF$ and $U = I_q^{n+1}F$). A set $S \sub V$ is {\bf{tight}} if the generated subspace $\sg{S} \sub V$ is contained in $P+U$; and {\bf{strongly tight}} if $\sg{S} \sub P$. For a finite tight set $S$, we define $\Sigma_S$ to be the sum $\sum p_{S'} \in V$, where $p_{S'} \in P$ are the unique elements such that $\set{p_{S'}+U \suchthat S' \sub S}$ is the group generated by $S$ in $V/U$ (in particular $p_{\emptyset} = 0$). As long as $\card{\sg{S}}  > 2$, we have that $\Sigma_S \in U$.

Let us say that a set is {\bf{almost strongly tight}} if it is tight and every proper subset is strongly tight. A strongly tight set is almost strongly tight.
\begin{prop}\label{prep}
Let $S = \set{\varphi_1,\dots,\varphi_s} \sub P$ be an almost strongly tight set of finite cardinality $s > 1$. Then $S$ is strongly tight if and only if $\Sigma_{S} = 0$.
\end{prop}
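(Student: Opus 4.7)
For the forward direction, suppose $S$ is strongly tight, so $\langle S \rangle \subseteq P$. Since $0 \in P \cap U$ and each $U$-coset contains at most one element of $P$, we have $P \cap U = \{0\}$, hence $\langle S \rangle \cap U = \{0\}$ and the projection $\langle S \rangle \to \bar{S}$ is a bijection. The Pfister representative $p_g$ of each $g \in \bar S$ is then the unique preimage of $g$ in $\langle S \rangle$, so
\[ \Sigma_S = \sum_{g \in \bar S} p_g = \sum_{v \in \langle S \rangle} v = 0, \]
the last equality being the sum over an elementary abelian $2$-group of order at least $4$ (guaranteed by the standing hypothesis $|\langle S \rangle| > 2$).

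For the converse, write $\varphi_I := \sum_{i \in I} \varphi_i$ and consider $\Phi' \colon 2^{\{1,\ldots,s\}} \to V$, $I \mapsto \varphi_I$. If $\Phi'$ is not injective, pick a nonempty $K$ with $\varphi_K = 0$ and some $j \in K$; then $\varphi_j = \sum_{i \in K\setminus\{j\}} \varphi_i$, so $\langle S \rangle = \langle S \setminus \{\varphi_j\} \rangle \subseteq P$ by almost strong tightness, and $S$ is strongly tight (while the forward direction gives $\Sigma_S = 0$). The biconditional holds in this case, so we may assume $\Phi'$ is injective.

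I next claim the composition $\Phi = \pi \circ \Phi' \colon 2^{\{1,\ldots,s\}} \to V/U$ is also injective. If not, $\varphi_K \in U$ for some nonempty $K$. If $K$ is proper, then almost strong tightness gives $\varphi_K \in P \cap U = \{0\}$, contradicting injectivity of $\Phi'$. If $K = \{1,\ldots,s\}$, then $\varphi_s$ and $\sum_{i<s} \varphi_i$ lie in the same coset of $U$ and both belong to $P$ (the latter by almost strong tightness); uniqueness of the Pfister representative in each coset forces $\varphi_s = \sum_{i<s} \varphi_i$, so $\sum_i \varphi_i = 0$, again contradicting injectivity of $\Phi'$.

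Hence $\Phi$ is injective, $|\bar S| = 2^s$, and for every proper $I \subsetneq \{1,\ldots,s\}$ we have $p_{\pi(\varphi_I)} = \varphi_I$. Combining this with the identity $\sum_{I \subseteq \{1,\ldots,s\}} \varphi_I = \sum_i 2^{s-1} \varphi_i = 0$, valid for $s \geq 2$, gives
\[ \Sigma_S = p_{g_*} + \sum_{I \subsetneq \{1,\ldots,s\}} \varphi_I = p_{g_*} + \sum_{i=1}^s \varphi_i, \]
where $g_* = \pi(\sum_i \varphi_i)$. Thus $\Sigma_S = 0$ forces $\sum_i \varphi_i = p_{g_*} \in P$, which together with the proper subset sums being in $P$ yields $\langle S \rangle \subseteq P$. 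The main obstacle is keeping track of the two distinct injectivity conditions on $\Phi'$ (in $V$) and $\Phi$ (in $V/U$); uniqueness of Pfister representatives in each $U$-coset is the key tool linking them.
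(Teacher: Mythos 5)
Your proof is correct and follows essentially the same route as the paper's: reduce to the case where the subset sums are distinct, use almost strong tightness to identify $p_{S'}$ with $\sum_{i\in S'}\varphi_i$ for proper $S'$, and exploit the vanishing of the sum over all subsets to obtain the key identity $\Sigma_S = p_{g_*} + \sum_i \varphi_i$ (the paper's $\Sigma_S = p_S - \Sigma(S)$), from which both directions follow via uniqueness of the $P$-representative in a coset. The only difference is that you make explicit the distinction between linear independence in $V$ and in $V/U$ (your two injectivity claims for $\Phi'$ and $\Phi$), a point the paper's ``we may assume $S$ is linearly independent'' leaves implicit; this is a worthwhile clarification but not a different argument.
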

\begin{proof}
We may assume $S$ is linearly independent, because otherwise the claim holds by assumption. For every $S' \sub S$, let $\Sigma(S') = \sum_{\varphi \in S'} \varphi$, and let $p_{S'}$ be the unique representative from $P$ of the element $\sum(S') + U$. If $S' \subsetneq S$ then $p_{S'} = \sum(S')$ because $S'$ is strongly tight.
By definition
\begin{eqnarray*}
\Sigma_S & = & \sum_{S' \sub S} p_{S'}
\ =\   (\sum_{S' \subsetneq S} p_{S'}) + p_S
\  = \  (\sum_{S' \subsetneq S} \Sigma(S')) + p_S \\
& = & (\sum_{S' \sub S} \Sigma(S')) + p_S - \Sigma(S)
\  = \ p_S - \Sigma(S)
\end{eqnarray*}
because the sum of elements of a noncyclic group of exponent $2$ is zero.
By definition $S$ is strongly tight if and only if $\sum(S') \in P$ for every $S' \sub S$, and since $S$ is almost strongly tight by assumption, this is equivalent to $\Sigma(S) \in P$. But $\Sigma(S) = p_S -  \Sigma_S$  is in $P$ if and only if $\Sigma_S = 0$, because $\Sigma(S)+U$ intersects $P$ in at most one point.
\end{proof}

\begin{thm}\label{ladder}
Let $s > 1$. A tight set $S = \set{\varphi_1,\dots,\varphi_s} \sub P$ is strongly tight if and only if $\Sigma_{S'} = 0$ for every subset $S' \sub S$ of cardinality $> 1$.
\end{thm}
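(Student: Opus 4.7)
The plan is to prove this by induction on $s = \card{S}$, leveraging \Pref{prep} at the inductive step. The forward implication is essentially immediate: if $S$ is strongly tight, then every subset $S' \sub S$ is strongly tight (being a subset of a strongly tight set), so by \Pref{down}.2 applied to each $S'$ of cardinality greater than one, $\Sigma_{S'} = 0$. (This uses the same argument as the strong tightness clause of \Pref{down}: the sum of all elements of a noncyclic elementary abelian $2$-group vanishes, now carried out inside $V$ rather than $V/U$.)

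For the reverse direction, I would set up induction on $s$, with the base case $s = 2$. When $s = 2$, there are no proper subsets of $S$ of cardinality greater than one, so the hypothesis reduces to $\Sigma_S = 0$. The singleton subsets $\set{\varphi_i}$ generate the two-element group $\set{0,\varphi_i}$ which lies in $P$ since $0 \in P$ by assumption, hence they are (trivially) strongly tight. Thus $S$ is almost strongly tight, and \Pref{prep} delivers strong tightness from $\Sigma_S = 0$.

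For the inductive step with $s > 2$, let $S' \subsetneq S$ be any proper subset of cardinality greater than one. Every subset $S'' \sub S'$ of cardinality greater than one is also a subset of $S$, so by hypothesis $\Sigma_{S''} = 0$. Since $\card{S'} < s$, the inductive hypothesis applies and shows $S'$ is strongly tight. As this holds for every proper $S' \sub S$ with $\card{S'} > 1$ (and trivially for singletons as noted above), $S$ is almost strongly tight. Applying \Pref{prep} one more time, the hypothesis $\Sigma_S = 0$ forces $S$ itself to be strongly tight, completing the induction.

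The proof is essentially a bookkeeping argument once \Pref{prep} is in place; the only conceptual point worth flagging is the shape of the induction, namely that strong tightness of an almost strongly tight set is controlled by a single invariant, and thus the full ladder of invariants over all subsets controls strong tightness of the whole set. No additional obstacle is expected, since the delicate part (the identity $\Sigma_S = p_S - \Sigma(S)$ and the uniqueness of Pfister representatives modulo $U$) has already been handled in \Pref{prep}.
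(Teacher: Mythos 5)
Your proof is correct and follows essentially the same route as the paper: both directions rest on the observation that subsets of a (strongly) tight set are (strongly) tight, and the reverse implication is an induction that establishes almost strong tightness at each stage and then invokes Proposition~\ref{prep}. The only cosmetic difference is that you induct on the cardinality of $S$ while the paper inducts on the size $k$ of subsets of a fixed $S$; the arguments are otherwise identical.
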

\begin{proof}
If $S$ is strongly tight then for every subset $S' \sub S$ we have that $\sg{S'} \sub P$, and the sum over a non-cyclic group of exponent $2$ is zero.

For the other direction, we prove that every subset of size $1 \leq k \leq \card{S}$ is strongly tight, by induction on $k$. Sets of size $1$ are strongly tight by definition. Let $S' \sub S$ be a subset of size $k > 1$. By induction we may assume every subset of size $k-1$ of $S'$ is strongly tight, namely $S'$ is almost strongly tight. By \Pref{prep}, $S'$ is strongly tight because $\Sigma_{S'} = 0$ by assumption.
\end{proof}

\subsection{Applications to quadratic forms}

Consider a set $S = \set{\varphi_1,\dots,\varphi_s}$ of $n$-fold Pfister forms. For $S$ to be left-linked, it is necessary that $S$ is strongly tight; and for that, it is necessary that $S$ is almost strongly tight.
\begin{cor}
Let $S = \set{\varphi_1,\dots,\varphi_s}$ be an almost strongly tight set of $n$-fold Pfister forms. Then $S$ is strongly tight if and only if $\Sigma_S = 0$. 
(This is \Pref{prep}).
\end{cor}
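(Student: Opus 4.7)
The plan is simply to observe that the corollary is an instance of \Pref{prep} applied to the specific setting of quadratic Pfister forms, so the work amounts to identifying the abstract data $(V, U, P)$ correctly and verifying the required hypotheses.

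First I would set $V = I_q^n F$ (viewed as a vector space over $\mathbb{F}_2$, since all its elements have order at most $2$), $U = I_q^{n+1} F$, and take $P$ to be the set of quadratic $n$-fold Pfister forms together with $0$. Then I need to check the three conditions from the abstract framework: that $0 \in P$ (by convention), that $P$ spans $V$ (which holds by the very definition of $I_q^n F$ as the subgroup generated by scalar multiples of $n$-fold Pfister forms, combined with the fact that scalar multiples of Pfister forms can themselves be resolved into sums of Pfister forms modulo the considerations already used in \Pref{sep}), and, most importantly, that each coset of $V/U$ contains at most one element of $P$. This last condition is exactly the content of \Rref{unique}: two $n$-fold Pfister forms that agree modulo $I_q^{n+1}F$ are isometric.

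Having verified the setup, the notions of \emph{tight}, \emph{strongly tight}, and \emph{almost strongly tight} for a finite subset $S \subseteq P$ in the abstract framework coincide with the corresponding notions for Pfister forms, and $\Sigma_S$ defined abstractly coincides with the invariant of \Dref{Sigmadef}. Therefore, invoking \Pref{prep} directly yields the equivalence: an almost strongly tight set $S$ of $n$-fold Pfister forms is strongly tight if and only if $\Sigma_S = 0$.

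There is essentially no obstacle here — the heavy lifting was done in \Pref{prep}, and the only thing to be careful about is making sure the uniqueness clause (at most one Pfister form per coset of $I_q^{n+1}F$) is properly invoked from \Rref{unique}. The corollary is stated explicitly as a translation of \Pref{prep} into the language of quadratic forms, so the proof should be a one- or two-sentence observation rather than a new argument.
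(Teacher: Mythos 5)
Your proposal is correct and matches the paper exactly: the paper's entire proof is the parenthetical ``(This is \Pref{prep})'', relying on the identification $V = I_q^nF$, $U = I_q^{n+1}F$, $P = P_n(F)$ already made when the abstract framework was set up, with the coset-uniqueness hypothesis supplied by \Rref{unique}. Your only addition is to spell out the verification of the hypotheses of \Pref{prep}, which the paper leaves implicit.
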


Assume $\operatorname{char}(F)=2$. For a set $S$ to be left-linked, it must be pairwise left-linked, and it must be strongly tight. It turns out that if $S$ is right-linked, then the two conditions are equivalent:
\begin{thm}\label{maincor++}
Let $S$ be a right-linked set of $n$-fold Pfister forms. The following are equivalent when $F$ has characteristic~$2$:
\begin{enumerate}
\item $S$ is pairwise left-linked.
\item $S$ is strongly tight. 
\end{enumerate}
\end{thm}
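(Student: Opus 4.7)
My plan is to address the two directions separately. The implication $(2)\Rightarrow(1)$ is immediate: if $S$ is strongly tight then any two-element subset $\{\varphi_i,\varphi_j\} \subseteq S$ is also strongly tight, and by \Pref{2RL} this pair is then left-linked.

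For the converse $(1)\Rightarrow(2)$, I would apply the ladder criterion \Tref{ladder}: strong tightness of $S$ will follow once we verify that $\Sigma_{S'} = 0$ for every subset $S' \subseteq S$ with $|S'| \geq 2$. Fix such an $S'$. It inherits the common $(n-1)$-fold quadratic Pfister right divisor $\varphi_0$, and its elements may be written as $\Pf{\beta_i}\otimes\varphi_0$. By \Rref{unique}, distinct Pfister forms in $S'$ project to distinct classes modulo $I_q^{n+1}F$, so the subgroup they generate has order $2^s$ with $s\geq 2$, and we may select $\Pf{\beta_1}\otimes\varphi_0,\dots,\Pf{\beta_s}\otimes\varphi_0 \in S'$ whose images form an $\F_2$-basis of this subgroup.

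By the computation in the proof of \Pref{sep}, $\Sigma_{S'}$ equals the Witt class of the $(n+s-1)$-fold Pfister form $\Pf{\beta_1,\dots,\beta_s}\otimes\varphi_0$. The pair $\Pf{\beta_1}\otimes\varphi_0,\, \Pf{\beta_2}\otimes\varphi_0$ belongs to $S'$ and is thus left-linked by hypothesis, so \Tref{twoforms} forces the $(n+1)$-fold Pfister form $\Pf{\beta_1,\beta_2}\otimes\varphi_0$ to be hyperbolic. Since $I_q^1 F$ is a module over the Witt ring $WF$, the factorisation
\[
\Pf{\beta_1,\dots,\beta_s}\otimes\varphi_0 \;=\; \Pf{\beta_3,\dots,\beta_s}\otimes\bigl(\Pf{\beta_1,\beta_2}\otimes\varphi_0\bigr)
\]
annihilates $\Sigma_{S'}$, with the convention that $\Pf{\beta_3,\dots,\beta_s}$ is the unit form $\Qf{1}$ when $s=2$.

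The main subtlety lies in ensuring that the independent generators appearing in the Pfister description of $\Sigma_{S'}$ are actually drawn from $S'$ itself, so that pairwise left-linkage can be applied to them; this is the step where \Rref{unique} is essential, because it lets us transport an $\F_2$-basis of the image subgroup back to honest elements of $S'$. Once this identification is made, everything else reduces to a routine manipulation of Pfister tensor products in the Witt ring.
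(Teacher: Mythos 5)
Your proof is correct and follows essentially the same route as the paper: the forward direction via \Pref{2RL} applied to two-element subsets (the paper cites \Pref{cpll}, which amounts to the same thing), and the converse via \Tref{ladder}, computing $\Sigma_{S'}$ as a Pfister form by \Pref{sep} and killing it with a hyperbolic $(n+1)$-fold factor $\Pf{\beta_i,\beta_j}\tensor\varphi_0$ supplied by \Tref{twoforms}. Your extra care in selecting generators of the subgroup from $S'$ itself is a welcome clarification of a point the paper leaves implicit.
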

\begin{proof}
$(2) \implies (1)$: by \Pref{cpll}. 
$(1) \implies (2)$: By assumption the elements of $S$ are $\Pf{\beta_i} \tensor \varphi$, $i = 1,\dots,s$, where $\beta_1,\dots,\beta_s \in \mul{F}$ and $\varphi$ is a quadratic $(n-1)$-fold Pfister. By \Tref{twoforms}, the forms $\Pf{\beta_i,\beta_j} \tensor \varphi$ are hyperbolic for every $i \neq j$. Moreover, $S$ is tight. By \Pref{3.2}, if $S' \sub S$ is a subset of size $>1$, then $\Sigma_{S'} = \Pf{\beta_{i_1},\dots,\beta_{i_k}} \tensor \varphi$ for a suitable set $\set{i_1,\dots,i_k} \sub \set{1,\dots,s}$. Every invariant of this form is divisible by one of the $\Pf{\beta_i,\beta_j} \tensor \varphi$, and is thus hyperbolic. By \Tref{ladder}, $S$ is strongly tight.
\end{proof}
We do not know if a right-linked set which is pairwise left-linked, is left-linked.

\begin{cor}\label{7.5}
Let $\alpha_1,\dots,\alpha_s \in \mul{F}$ and let $\psi$ be a quadratic $(n-1)$-fold Pfister form. Assume that the forms $\Pf{\alpha_i}\tensor \psi$ are pairwise left-linked. Then
$$\Pf{\alpha_1} \perp \cdots \perp \Pf{\alpha_s} \perp \Pf{\alpha_1\cdots\alpha_s}$$
annihilates $\psi$ in $I_q^nF$.
\end{cor}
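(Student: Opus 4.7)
The plan is to deduce this directly from Theorem~\ref{maincor++}. Set $S = \set{\Pf{\alpha_1}\tensor\psi,\dots,\Pf{\alpha_s}\tensor\psi}$. By construction $S$ is right-linked with common right $(n-1)$-fold factor $\psi$, and by hypothesis it is pairwise left-linked. Since $\mychar F = 2$ is in force (the notion of left-linkage is the relevant one), \Tref{maincor++} applies and shows that $S$ is strongly tight.

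Now I would exploit strong tightness combined with uniqueness of Pfister representatives (\Rref{unique}). By \Pref{3.2}, the elements of the subgroup generated by $S$ in $I_q^nF/I_q^{n+1}F$ are represented by the Pfister forms $\Pf{\alpha_{i_1}\cdots\alpha_{i_k}}\tensor\psi$. Strong tightness means that the subgroup generated by $S$ already inside $I_q^nF$ consists of Pfister forms; since reduction modulo $I_q^{n+1}F$ is injective on Pfister forms (\Rref{unique}), this subgroup is in bijection with its image in the quotient. In particular, the sum $\Pf{\alpha_1}\tensor\psi+\cdots+\Pf{\alpha_s}\tensor\psi$, computed in $I_q^nF$, must coincide with the unique Pfister representative of its class modulo $I_q^{n+1}F$, which is $\Pf{\alpha_1\cdots\alpha_s}\tensor\psi$ (iterating $\Pf{\beta}\perp\Pf{\beta'}\equiv\Pf{\beta\beta'}\pmod{I^2F}$ as in the proof of \Pref{3.2}).

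Finally, adding $\Pf{\alpha_1\cdots\alpha_s}\tensor\psi$ to both sides of
\[
\Pf{\alpha_1}\tensor\psi+\cdots+\Pf{\alpha_s}\tensor\psi \;=\; \Pf{\alpha_1\cdots\alpha_s}\tensor\psi
\]
and using that every element of $I_q^1F$ has order $2$ (since $\mychar F = 2$) yields
\[
\bigl(\Pf{\alpha_1}\perp\cdots\perp\Pf{\alpha_s}\perp\Pf{\alpha_1\cdots\alpha_s}\bigr)\tensor\psi \;=\; 0
\]
in $I_q^nF$, as required. There is no real obstacle here: the entire weight of the argument rests on \Tref{maincor++} to pass from pairwise left-linkage to strong tightness; once that is in hand, the conclusion is a short formal calculation using only the uniqueness of Pfister representatives and the standard additive identity $\Pf{\beta}+\Pf{\beta'}=\Pf{\beta\beta'}+\Pf{\beta,\beta'}$.
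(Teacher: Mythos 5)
Your proof is correct and follows essentially the same route as the paper: both arguments observe that the set $\set{\Pf{\alpha_i}\tensor\psi}$ is right-linked by construction and pairwise left-linked by hypothesis, invoke \Tref{maincor++} to conclude strong tightness, and then identify the sum in $I_q^nF$ with its unique Pfister representative $\Pf{\alpha_1\cdots\alpha_s}\tensor\psi$ via \Rref{unique} and \Pref{3.2}. The only difference is that you spell out the final rearrangement (adding $\Pf{\alpha_1\cdots\alpha_s}\tensor\psi$ to both sides and using that $I_q^nF$ has exponent $2$), which the paper leaves implicit.
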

\begin{proof}
The set of $n$-fold Pfister forms $\Pf{\alpha_i} \tensor \psi$ is right-linked and pairwise left-linked, by assumption. By \Tref{maincor++}, this set is strongly tight and therefore the sum of the forms is the unique Pfister form representing the sum modulo $I_q^{n+1}(F)$, which is $\Pf{\alpha_1\cdots\alpha_s} \tensor \psi$.
\end{proof}

\section{Quaternion Algebras}\label{quaternion}

Two quaternion algebras are linked if they share a common maximal subfield. The norm form of a quaternion algebra
\begin{equation}\label{standard}
\QA{\alpha}{\beta} = F \ideal{x,y \subjectto x^2+x=\alpha,\, y^2=\beta,\, y xy^{-1}=x+1}
\end{equation}
is $\MPf{\beta,\alpha}$, so left linkage translates to a common inseparable maximal subfield (of the form $F[y]$), and right linkage to a common separable maximal subfield (of the form $F[x]$). The results of \citep{Draxl:1975}, \cite{Lam:2002} and \cite{ElduqueVilla:2005} on the linkage of pairs of quaternion algebras were extended in \cite{Faivre:thesis} to $n$-fold Pfister forms.

Sivatski studies \cite{Sivatski:2014} triplets of quaternion algebras over a field $F$ of characteristic not $2$, containing $\sqrt{-1}$. If $S = \set{A,B,C}$ is (in our terminology) tight, he defines an invariant $\Sigma_S' \in I_q^3 F/I_q^4 F$ (which is the image of our invariant $\Sigma_S \in I_q^3F$). As pointed out by Sivatski, $\Sigma_S' = 0$ is a necessary condition for $A,B,C$ to be linked. Recall that in characteristic not $2$, the symbol notation for quaternion algebras is
$$(\alpha,\beta)_{2,F} = F\ideal{ x,y \subjectto x^2 = \alpha,\, y^2 = \beta,\, yxy^{-1} = -x}.$$
Sivatski shows in \cite[Cor.~3]{Sivatski:2014} that the algebras $(a,b)$, $(b,c)$ and $(c,a)$ over the function field $k(a,b,c)$ are tight (and thus pair-wise linked), but since in this case $\Sigma_S' = \Pf{a,b,c}$, the algebras cannot be linked as a triplet. He then asks for an example with $\Sigma_S' = 0$ which is not linked. Such an example was given in~\cite{QueguinerTignol:2015}.

\subsection{Pair of quaternion algebras}

Let $Q_1,Q_2$ be quaternion algebras over $F$, of characteristic~$2$, and let $\varphi_1,\varphi_2$ be the respective norm forms, which are $2$-fold Pfister forms. Write $Q_i = \QA{\alpha_i}{\beta_i}$, so that $\varphi_i = \MPf{\beta_i,\alpha_i}$. We provide explicit conditions for the existence of left- or right-linkage.
\begin{prop}\label{6.3}
The following conditions are equivalent.
\begin{enumerate}
\item $Q_1,Q_2$ have a common inseparable maximal subfield.
\item $\varphi_1,\varphi_2$ are left-linked.
\item The form $\Alb' = \beta_1[1,\alpha_1] \perp \beta_2[1,\alpha_2] \perp \Qf{1}$ is isotropic.
\end{enumerate}
\end{prop}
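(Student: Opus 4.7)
The plan is to prove the equivalences $(1) \Leftrightarrow (2)$ and $(1) \Leftrightarrow (3)$ separately. For $(1) \Leftrightarrow (2)$, I would invoke the standard correspondence between quaternion algebras and their norm forms: the norm form of $\QA{\alpha}{\beta}$ is the $2$-fold Pfister form $\MPf{\beta,\alpha} = \Pf{\beta} \tensor [1,\alpha]$, and this identification is bijective on isomorphism classes. Consequently, $Q_1$ and $Q_2$ share an inseparable maximal subfield $F[y]$ with $y^2 = b$ exactly when each $Q_i$ admits a presentation $\QA{\alpha_i'}{b}$, equivalently when each norm form factors as $\varphi_i = \Pf{b} \tensor [1,\alpha_i']$, i.e., when $\Pf{b}$ is a common bilinear left divisor of $\varphi_1$ and $\varphi_2$.

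For $(1) \Leftrightarrow (3)$, I would compute the square of a general trace-zero element of $Q_i = \QA{\alpha_i}{\beta_i}$. Such an element has the shape $z = a_0 + a_2 y_i + a_3 x_i y_i$, and using $x_i^2 + x_i = \alpha_i$, $y_i^2 = \beta_i$ and $y_i x_i = (x_i+1) y_i$ one finds
$$z^2 = a_0^2 + \beta_i\bigl(a_2^2 + a_2 a_3 + \alpha_i a_3^2\bigr) = a_0^2 + \beta_i [1,\alpha_i](a_2,a_3).$$
Such a $z$ generates an inseparable quadratic subfield of $Q_i$ iff $(a_2,a_3) \neq (0,0)$ and $z^2 \notin F^2$. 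Hence a common subfield $F[\sqrt{b}]$ of $Q_1,Q_2$ supplies $v_1,v_2 \neq 0$ and $a_0,a_0' \in F$ with $a_0^2 + \beta_1[1,\alpha_1](v_1) = b = a_0'^2 + \beta_2[1,\alpha_2](v_2)$; summing these in characteristic~$2$ produces a non-trivial isotropy of $\Alb'$, proving $(1) \Rightarrow (3)$. Conversely, starting from an isotropy $\beta_1[1,\alpha_1](v_1) + \beta_2[1,\alpha_2](v_2) + t^2 = 0$, I would set $b = \beta_1[1,\alpha_1](v_1)$ and, in the generic case where both $v_i \neq 0$ and $b \notin F^2$, exhibit $z_1 = v_1^{(1)} y_1 + v_1^{(2)} x_1 y_1 \in Q_1$ and $z_2 = t + v_2^{(1)} y_2 + v_2^{(2)} x_2 y_2 \in Q_2$ satisfying $z_1^2 = z_2^2 = b$, realising $F[\sqrt b]$ as the common subfield.

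The main obstacle is the case-analysis for the degenerate possibilities of $(3) \Rightarrow (1)$: if some $v_i$ vanishes, or if the value $b$ ends up lying in $F^2$ (including $b = 0$), the constructed element forces the relevant $Q_i$ to contain a nonzero nilpotent and hence to be split. But a split quaternion algebra is isomorphic to $\M[2](F)$ and embeds every inseparable quadratic extension of $F$, while the other $Q_j$ always contains at least one such extension (since a division quaternion algebra in characteristic~$2$ must have $\beta_j \notin F^2$), so the linkage in $(1)$ holds automatically. Once these exceptional cases are absorbed, all three implications reduce to the single square formula displayed above.
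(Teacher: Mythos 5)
Your proof is correct in substance but follows a genuinely different route from the paper's. The paper also dismisses $(1)\Leftrightarrow(2)$ as the standard dictionary between a quaternion algebra and its norm form, but for the remaining equivalence it does not touch the algebras at all: it invokes Faivre's criterion (\Tref{5.2}) with $n=2$, so that left-linkage becomes $i_W(\varphi_1'\perp\varphi_2')\geq 1$, and then computes the orthogonal sum of pure parts as $\beta_1[1,\alpha_1]\perp\beta_2[1,\alpha_2]\perp\Qf{1,1}\isom \Alb'\perp\Qf{0}$, so that isotropy of $\Alb'$ is exactly the Faivre condition. You instead prove $(1)\Leftrightarrow(3)$ by hand, via the formula $z^2=a_0^2+\beta_i[1,\alpha_i](a_2,a_3)$ for trace-zero elements of $Q_i$; in effect you are re-deriving the $n=2$ case of Faivre's theorem at the level of elements of the algebras. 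What your approach buys is self-containedness and an explicit construction of the common inseparable element from an isotropic vector of $\Alb'$; what it costs is the case analysis for degenerate isotropic vectors, which the Witt-index formalism absorbs silently. Your computation and the generic construction are correct, and your observation that each degenerate case forces one $Q_i$ to contain a nilpotent and hence split is the right way to close them.

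One small caveat, which you should state rather than leave implicit: your resolution of the degenerate cases ("the other $Q_j$ always contains at least one such extension since a division quaternion algebra in characteristic~$2$ must have $\beta_j\notin F^2$") assumes at least one of the two algebras is division, or else that $F$ is imperfect so that $\M[2](F)$ actually contains an inseparable quadratic \emph{field}. Over a perfect field both algebras split, $\Alb'$ is isotropic, yet no inseparable quadratic extension of $F$ exists, so condition $(1)$ fails vacuously. This degeneracy equally afflicts the "trivial" equivalence $(1)\Leftrightarrow(2)$ as used in the paper, so it is not a defect specific to your argument, but since your proof is the one doing explicit case analysis it should name the hypothesis ($F$ imperfect, or one $Q_i$ division) under which the exceptional cases really are absorbed.
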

\begin{proof}
The equivalence of the first two conditions is trivial. Taking $n = 2$ in  \Tref{5.2}, since $2^{n-1}-1 = 1$,
the $2$-fold Pfister forms $\varphi_1$ and $\varphi_2$ are left-linked if and only if $\varphi_1'\perp \varphi_2'$ contains a hyperbolic subplane, but $\varphi_1'\perp \varphi_2' = \beta_1[1,\alpha_1] \perp \beta_2[1,\alpha_2] \perp \Qf{1,1} \isom \beta_1[1,\alpha_1] \perp \beta_2[1,\alpha_2] \perp \Qf{1,0} = \Alb' \perp \Qf{0}$, so $i_W(\varphi_1'\perp \varphi_2') > 0$ if and only if $\Alb'$ is isotropic, as in the proof of \Tref{twoforms}.
\end{proof}

By taking $n = 2$ in \Cref{7.5}, we obtain the following curious fact:
\begin{cor}
Let $\QA{\beta}{\alpha_i}$ be quaternion algebras. If for every $i,j$ the quaternion algebras $\QA{\beta}{\alpha_i}$ and $\QA{\beta}{\alpha_j}$ share an inseparable quadratic subfield, then
$$\MPf{\alpha_1,\beta} \perp \cdots \perp \MPf{\alpha_s,\beta} \sim \MPf{\alpha_1\cdots\alpha_s,\beta}$$
in $I_q^2F$.
\end{cor}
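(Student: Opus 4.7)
The plan is to reduce the statement directly to \Cref{7.5} via the characteristic-$2$ quaternion dictionary.

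First I would specialize \Cref{7.5} to $n=2$ by choosing $\psi$ to be the quadratic $1$-fold Pfister form $\MPf{\beta}$. With this choice, each $n$-fold Pfister form $\Pf{\alpha_i}\tensor \psi$ equals $\MPf{\alpha_i,\beta}$, which by the discussion at the beginning of \Sref{quaternion} is precisely the norm form of the quaternion algebra $\QA{\beta}{\alpha_i}$.

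Next I would translate the geometric hypothesis into a linkage hypothesis on forms. By \Pref{6.3}, two quaternion algebras $\QA{\beta}{\alpha_i}$ and $\QA{\beta}{\alpha_j}$ share a common inseparable maximal subfield if and only if their norm forms $\MPf{\alpha_i,\beta}$ and $\MPf{\alpha_j,\beta}$ are left-linked. Thus the pairwise hypothesis of the corollary is exactly the hypothesis that the family $\{\Pf{\alpha_i}\tensor \psi\}$ is pairwise left-linked, which is what \Cref{7.5} requires.

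Applying \Cref{7.5} then yields that
\[
\bigl(\Pf{\alpha_1}\perp\cdots\perp\Pf{\alpha_s}\perp\Pf{\alpha_1\cdots\alpha_s}\bigr)\tensor \MPf{\beta}=0 \quad\text{in } I_q^2F.
\]
Distributing the tensor product gives $\MPf{\alpha_1,\beta}\perp\cdots\perp\MPf{\alpha_s,\beta}\perp\MPf{\alpha_1\cdots\alpha_s,\beta}\sim 0$, and since every element of $I_q^2F$ has order $2$, this is equivalent to the stated Witt-equivalence. There is no real obstacle here: the corollary is essentially a packaging of \Pref{6.3} with \Cref{7.5}, and the only bookkeeping is to match the notation $\QA{\beta}{\alpha}$ with the convention that its norm form is $\MPf{\alpha,\beta}=\Pf{\alpha}\tensor\MPf{\beta}$.
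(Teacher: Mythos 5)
Your proof is correct and follows exactly the route the paper intends: the paper derives this corollary by taking $n=2$ in \Cref{7.5} with $\psi=\MPf{\beta}$, using the translation between inseparable maximal subfields and left-linkage from \Pref{6.3}. Your write-up merely makes explicit the bookkeeping that the paper leaves implicit.
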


Recall that $\Alb = \beta_1[1,\alpha_1] \perp \beta_2[1,\alpha_2] \perp [1,\alpha_1+\alpha_2]$ is the Albert form of $Q_1 \tensor Q_2$ (see \cite[Section~16]{BOI}).
\begin{prop}\label{Albert}
The following conditions are equivalent.
\begin{enumerate}
\item $Q_1,Q_2$ have a common separable maximal subfield.
\item $\varphi_1,\varphi_2$ are right-linked.
\item The form $\Alb = \beta_1[1,\alpha_1] \perp \beta_2[1,\alpha_2] \perp [1,\alpha_1+\alpha_2]$ is isotropic.
\end{enumerate}
\end{prop}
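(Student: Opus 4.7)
The plan is to mirror the proof of \Pref{6.3}, replacing the Faivre-based left-linkage criterion (\Tref{5.2}) and the five-dimensional form $\Alb'$ used there by a right-linkage criterion expressed through the full six-dimensional Albert form $\Alb$.

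For (1) $\Leftrightarrow$ (2), I would use the presentation \eq{standard}: $Q_i$ admits a separable maximal subfield of the shape $F[x]$ with $x^2+x=\alpha$ iff $Q_i \simeq \QA{\alpha}{\gamma}$ for some $\gamma \in \mul{F}$, and since the norm form is a complete isomorphism invariant of quaternion algebras this is equivalent to $\varphi_i \simeq \MPf{\gamma,\alpha} = \Pf{\gamma}\tensor[1,\alpha]$, i.e., to $[1,\alpha]$ being a common right factor of $\varphi_1$ and $\varphi_2$. For (2) $\Leftrightarrow$ (3) I would first establish the isometry $\varphi_1 \perp \varphi_2 \simeq \Alb \perp \HQ$: in characteristic $2$ the Witt identity $\MPf{\alpha_1}+\MPf{\alpha_2} \sim \MPf{\alpha_1+\alpha_2}$ together with a dimension count gives $[1,\alpha_1]\perp[1,\alpha_2] \simeq [1,\alpha_1+\alpha_2]\perp\HQ$, and substituting into $\varphi_i = [1,\alpha_i] \perp \beta_i[1,\alpha_i]$ produces the claimed isometry. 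Consequently (3) is equivalent to $i_W(\varphi_1 \perp \varphi_2) \geq 2$, and the direction (2) $\Rightarrow$ (3) is immediate: writing $\varphi_i = \Pf{\gamma_i}\tensor[1,\alpha]$, the sum $\varphi_1 \perp \varphi_2$ contains two copies of $[1,\alpha]$ which together are hyperbolic (since $\MPf{\alpha}\perp\MPf{\alpha} \sim \MPf{0} \sim 0$), forcing $i_W \geq 2$.

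The remaining implication (3) $\Rightarrow$ (2) is the main obstacle. I would handle it via the classical Albert--Draxl package: isotropy of $\Alb$ is equivalent to $Q_1 \tensor Q_2$ not being a division algebra, hence to $Q_1, Q_2$ sharing a common quadratic subfield. In characteristic $2$ this subfield might a priori be inseparable, but Draxl's theorem \cite{Draxl:1975} (cited in the Introduction) guarantees that inseparable linkage forces separable linkage, so the subfield may be taken separable, yielding (1) and hence (2). An alternative self-contained route would use the roundness of quadratic Pfister forms to read off a common $1$-fold right factor from an isotropic subspace of $\varphi_1 \perp \varphi_2$, but the characteristic-$2$ bookkeeping there is noticeably more delicate than the Albert--Draxl shortcut.
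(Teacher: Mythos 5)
Your proposal is correct and follows essentially the same route as the paper: the crux, isotropy of $\Alb$ implying a common subfield, is delegated to the classical Albert criterion for $Q_1\tensor Q_2$ (the paper cites \cite{BOI}, Theorem~16.5 and Example~16.4, for exactly this), while $(1)\Leftrightarrow(2)$ is the same standard presentation argument the paper dismisses as trivial. Your only additions are the explicit isometry $\varphi_1\perp\varphi_2\isom \Alb\perp\HQ$ giving a direct proof of $(2)\Rightarrow(3)$, and the explicit invocation of Draxl's theorem to upgrade a possibly inseparable common subfield to a separable one --- a point the paper's one-line citation glosses over.
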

\begin{proof}
Again the equivalence $(1) \Leftrightarrow (2)$ is trivial. The Albert form of $Q_1 \tensor Q_2$ is isotropic if and only if $Q_1,Q_2$ have a common maximal subfield, see \cite{BOI}, Theorem~16.5 and Example~16.4.
\end{proof}

Notice that $\Alb' \sub \Alb$, which proves once more that left-linkage implies right-linkage. We also note that if $\varphi_1,\varphi_2$ are right-linked, then we may assume $\alpha_1 = \alpha_2 = \alpha$, and the non-trivial part $\beta_1[1,\alpha_1] \perp \beta_2[1,\alpha_2]$ of the Albert form $\Alb$ is similar to the Pfister form $\Alb_0 = \MPf{\beta_1\beta_2,\alpha}$, which is a factor of $\MPf{\beta_1,\beta_2,\alpha}$, whose Witt class is $\Sigma_{\set{\varphi_1,\varphi_2}}$. Now \Pref{6.3} and \Tref{twoforms} give two conditions for $\varphi_1$ and $\varphi_2$ to be left-linked, namely that $\beta_1\Alb_0 \perp \Qf{1}$ is isotropic and that $\MPf{\beta_1,\beta_2,\alpha}$ is isotropic, respectively. Indeed, the two conditions are equivalent because $\beta_1\Alb_0 \perp \Qf{1}$ is a Pfister neighbor of $\MPf{\beta_1,\beta_2,\alpha}$.

\begin{lem}\label{five_and_six}
Let $\Alb$ be any anisotropic Albert form over a field $F$ of characteristic~$2$. Any $5$-dimensional anisotropic quadratic form $\varphi$ over $F$ of the form $\varphi = [\alpha,\beta]\perp [\gamma,\delta] \perp \langle \epsilon \rangle$ remains anisotropic over $F(\Alb)$.
\end{lem}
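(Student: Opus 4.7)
The plan is to argue by contradiction. Suppose $\varphi$ becomes isotropic over $T := F(\Alb)$, and write $\varphi = \nu \perp \langle \epsilon \rangle$ with $\nu := [\alpha,\beta] \perp [\gamma,\delta]$ the nonsingular part (of dimension $4$) and $\langle \epsilon \rangle$ the quasi-linear part. A primitive $T$-isotropy vector $(x_1,\ldots,x_5)$ then satisfies $\nu(x_1,x_2,x_3,x_4) = \epsilon\, x_5^2$ (using $\operatorname{char}(F) = 2$). I would split the argument according to whether the quasi-linear coordinate $x_5$ vanishes.

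If $x_5 = 0$, then $\nu$ is isotropic over $T$. But $\nu$ is a nonsingular anisotropic $F$-form of dimension $4 = 2^2$ and $\Alb$ is anisotropic of dimension $6 > 2^2$, so this contradicts the Hoffmann--Laghribi separation theorem in characteristic~$2$: a nonsingular anisotropic form of dimension $\leq 2^n$ remains anisotropic over the function field of any anisotropic form of dimension $> 2^n$. This rules out the $x_5 = 0$ case.

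If $x_5 \neq 0$, then $\epsilon = \nu(x_1/x_5,\ldots,x_4/x_5)$, so $\nu$ newly represents $\epsilon$ over $T$ (it does not over $F$, by anisotropy of $\varphi$). A Cassels--Pfister-style descent---in its characteristic-$2$ formulation for function fields of quadrics---combined with a subform argument, would force $\Alb$ to embed (up to a scalar factor) into $\varphi = \nu \perp \langle \epsilon \rangle$, contradicting the dimension count $\dim \Alb = 6 > 5 = \dim \varphi$.

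The main obstacle will be this second case: in characteristic~$2$, the subform theorem and Cassels--Pfister machinery for a \textit{singular} form newly representing a scalar over the function field of a \textit{non-Pfister} quadric like $\Alb$ are delicate, requiring careful bookkeeping of the interaction between quasi-linear and nonsingular parts. The cleanest implementation will invoke Hoffmann--Laghribi's framework for the isotropy of quadratic forms over function fields of quadrics in characteristic~$2$.
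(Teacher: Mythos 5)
Your first case is sound: if the isotropy vector has $x_5=0$ then the nonsingular $4$-dimensional subform $\nu=[\alpha,\beta]\perp[\gamma,\delta]$ becomes isotropic over $F(\Alb)$, and since $\dim\nu=4=2^2<6=\dim\Alb$ the characteristic-$2$ separation theorem of Hoffmann--Laghribi rules this out. The gap is in your second case. There is no Cassels--Pfister/subform theorem that converts ``$\nu$ newly represents $\epsilon$ over $F(\Alb)$'' (equivalently, $\varphi_{F(\Alb)}$ isotropic with $x_5\neq 0$) into ``$\Alb$ embeds, up to a scalar, into $\varphi$.'' The subform theorem requires the form to become \emph{hyperbolic} over the function field, which one only gets for Pfister forms, or for Pfister neighbors after passing to the ambient Pfister form; mere isotropy, or a new represented value, gives nothing of the sort for a general $5$-dimensional form. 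A symptom that the step cannot work as stated: your argument nowhere uses that $\Alb$ is an \emph{Albert} form (i.e.\ lies in $I_q^2F$, has trivial Arf invariant), and the conclusion is false for a general anisotropic $6$-dimensional form in its place. Indeed, if $\varphi$ is a $5$-dimensional Pfister neighbor of an anisotropic $3$-fold Pfister form $\phi$ and $\tau\sub\phi$ is a $6$-dimensional subform, then $\varphi_{F(\tau)}$ is isotropic even though $\dim\tau=6>5=\dim\varphi$; your dimension-count contradiction would ``disprove'' this. (Such a $\tau$ is never an Albert form, precisely by the Arf-invariant computation below --- which is the content your proof is missing.)

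The paper's dichotomy is different and is what makes the argument close: it splits on whether $\varphi$ is a Pfister neighbor, not on the shape of the isotropy vector. If $\varphi$ is not a Pfister neighbor, anisotropy over $F(\Alb)$ is exactly Laghribi's theorem on $5$-dimensional forms of this type. If $\varphi$ is a neighbor of an anisotropic $\phi\in P_3(F)$, then isotropy of $\varphi_{F(\Alb)}$ forces $\phi_{F(\Alb)}$ to be hyperbolic, and \emph{now} the subform/domination machinery applies to $\phi$ (not to $\varphi$), giving $\phi\isom\Alb\perp[a,b]$; since $\phi$ and $\Alb$ both lie in $I_q^2F$, the Arf invariant gives $ab\in\wp(F)$, so $[a,b]\isom\HQ$ and $\phi$ is isotropic, a contradiction. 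To repair your proof you would have to replace the second case by this Pfister-neighbor analysis (or cite Laghribi's result directly); the quasi-linear-coordinate case split does not by itself isolate the genuinely hard configuration.
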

\begin{proof}
If $\varphi$ is not a Pfister neighbor, then $\varphi_{F(\Alb)}$ is anisotropic by
\cite[Theorem 1.2 (2)]{Laghribi:2002}.
If $\varphi$ is a Pfister neighbor then it must be a Pfister neighbor of some anisotropic $3$-fold Pfister form $\phi$.
By the first paragraph of \cite[Section 1.2]{Laghribi:2002}, if $\varphi_{F(\Alb)}$ is isotropic then $\Alb$ is dominated by $\phi$, and since both $\Alb$ and $\phi$ represent $1$, this forces $\phi \isom \Alb \perp [a,b]$ for some $a,b \in F$ because $\Alb$ is nonsingular.
Recall \cite[Section~13]{EKM} that the Arf invariant $\Delta \co I_q^1 F \ra F/\wp(F)$, defined by $\Delta(\perp [a_i,b_i]) = \sum a_i b_i$ (where $\wp(F) = \set{\alpha^2+\alpha \suchthat \alpha \in F}$), has $\ker(\Delta) = I_q^2F$. Since $\Alb,\varphi \in I_q^2F$, we have that $\Delta([a,b]) = \Delta(\Alb)+\Delta([a,b]) = \Delta(\varphi) = 0$, so $[a,b] \isom \HQ$, contrary to the assumption that $\varphi$ is anisotropic.
\end{proof}

\begin{cor}\label{six and down}
Assume $\Alb$ is an anisotropic Albert form over a field $F$ of characteristic~$2$. Then any proper subform of $\Alb$ remains anisotropic over $F({\Alb})$.
\end{cor}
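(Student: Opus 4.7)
\smallskip

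\noindent\textbf{Proof plan.} The strategy is to reduce the corollary to the previous lemma by showing that every $5$-dimensional subform of $\Alb$ has the precise shape $[\alpha,\beta]\perp[\gamma,\delta]\perp\langle\epsilon\rangle$ required by \Lref{five_and_six}.

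First I would make the reduction to dimension $5$: any proper subform $\psi$ of $\Alb$ is the restriction of $\Alb$ to a subspace $W$ of the $6$-dimensional ambient space $V$ with $\dim W \leq 5$, and we may extend $W$ to a $5$-dimensional subspace $W'$. Since the restriction $\Alb|_{W'}$ contains $\psi$ as a subform, it suffices to prove anisotropy over $F(\Alb)$ for the $5$-dimensional restriction $\Alb|_{W'}$.

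The main step is the structural claim: for any $5$-dimensional subspace $W\subset V$, the restriction $\Alb|_W$ decomposes as $[\alpha,\beta]\perp[\gamma,\delta]\perp\langle\epsilon\rangle$ with $\epsilon \neq 0$. Because $\Alb$ is nonsingular, its polar form $b$ is non-degenerate on $V$; because $\mychar F = 2$, $b$ is alternating. A non-degenerate alternating bilinear form cannot exist on a space of odd dimension, so $b|_W$ must be degenerate. Since $\dim W^{\perp} = 1$ inside $V$ and the radical $W\cap W^{\perp}$ is non-zero, we have $W^{\perp}\subseteq W$, i.e.\ the radical of $b|_W$ is a $1$-dimensional subspace $R$. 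Choosing any $4$-dimensional complement $W_0$ of $R$ in $W$, the fact that $R$ is orthogonal to all of $W$ gives an orthogonal decomposition $\Alb|_W = \Alb|_{W_0}\perp\Alb|_R$. The first summand is nonsingular and $4$-dimensional, hence isometric to $[\alpha,\beta]\perp[\gamma,\delta]$ by the structure theory recalled in \Sref{sec:2}. The second summand is $\langle q(r)\rangle$ for any generator $r$ of $R$, and $q(r)\neq 0$ because $r\neq 0$ and $\Alb$ is anisotropic.

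With the shape established, an application of \Lref{five_and_six} shows that $\Alb|_{W'}$ remains anisotropic over $F(\Alb)$, which gives the same conclusion for its subform $\psi$. The only subtle point in the argument is the bilinear-form analysis that forces $W^{\perp}\subseteq W$ and produces the specific decomposition; everything else is a routine extension followed by an invocation of the previous lemma.
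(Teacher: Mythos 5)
Your proposal is correct and follows exactly the route the paper intends: the corollary is stated without proof as an immediate consequence of \Lref{five_and_six}, and your argument supplies the structural detail the paper leaves implicit, namely that every $5$-dimensional restriction of the nonsingular $6$-dimensional form $\Alb$ has a $1$-dimensional radical (since an alternating nondegenerate form cannot restrict nondegenerately to odd dimension) and hence splits as $[\alpha,\beta]\perp[\gamma,\delta]\perp\langle\epsilon\rangle$ with $\epsilon\neq 0$ by anisotropy. The reduction from subforms of dimension $\le 5$ to exactly $5$-dimensional restrictions is also sound, so nothing is missing.
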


\begin{prop}\label{Adam}
If $\varphi_1,\varphi_2$ are left-linked over the function field $F(\Alb)$, then they are right-linked over $F$.
\end{prop}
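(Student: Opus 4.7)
The plan is to argue by contrapositive: assuming $\varphi_1,\varphi_2$ are \emph{not} right-linked over $F$, I will show they cannot be left-linked over $F(\Alb)$ either.

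First, observe how the two relevant forms fit together. By \Pref{Albert}, the failure of right-linkage of $\varphi_1,\varphi_2$ over $F$ is equivalent to the Albert form
\[
\Alb = \beta_1[1,\alpha_1] \perp \beta_2[1,\alpha_2] \perp [1,\alpha_1+\alpha_2]
\]
being anisotropic. On the other hand, by \Pref{6.3}, left-linkage of $\varphi_1,\varphi_2$ over $F(\Alb)$ is equivalent to the $5$-dimensional form
\[
\Alb' = \beta_1[1,\alpha_1] \perp \beta_2[1,\alpha_2] \perp \Qf{1}
\]
being isotropic over $F(\Alb)$. The crucial observation is that $\Alb'$ is (up to isometry) a proper subform of $\Alb$: stripping the second coordinate of the $[1,\alpha_1+\alpha_2]$-block leaves precisely $\Alb'$.

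Now assume $\Alb$ is anisotropic over $F$. Then $\Alb'$, as a subform, is also anisotropic over $F$, and \Cref{six and down} applies (or directly, \Lref{five_and_six}, since $\Alb'$ has the precise shape $[\alpha,\beta]\perp[\gamma,\delta]\perp\Qf{\epsilon}$). Hence $\Alb'$ remains anisotropic over the function field $F(\Alb)$. By \Pref{6.3} applied over $F(\Alb)$, this means $\varphi_1,\varphi_2$ are not left-linked over $F(\Alb)$, contradicting the hypothesis.

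I do not foresee a serious obstacle — the whole argument is essentially bookkeeping, matching the two criteria from Propositions~\ref{6.3} and \ref{Albert} and then invoking \Cref{six and down}. The only subtle point to state carefully is the identification of $\Alb'$ as a subform of $\Alb$ of the form considered in \Lref{five_and_six}, so that the function-field anisotropy result applies verbatim.
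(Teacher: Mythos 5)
Your proof is correct and follows essentially the same route as the paper: argue by contrapositive, use \Pref{Albert} to get anisotropy of $\Alb$ over $F$, identify $\Alb'$ as a proper subform of $\Alb$ so that \Cref{six and down} (via \Lref{five_and_six}) keeps it anisotropic over $F(\Alb)$, and conclude via \Pref{6.3}. No gaps.
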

\begin{proof}
Otherwise, $\Alb$ is anisotropic over $F$ by \Pref{Albert}, and $\Alb'$ will remain anisotropic over $F(\Alb)$ by \Cref{six and down}, contrary to \Pref{6.3}.
\end{proof}

\subsection{The invariant $\Sigma$ for $2$-fold Pfister forms}

Let $S = \set{\varphi_1,\dots,\varphi_s}$ be a tight set of quadratic $2$-fold Pfister forms over the field $F$ of characteristic~$2$. Denote the corresponding quaternion algebras by $Q_1,\dots,Q_s$. By throwing out elements, we may assume the group generated by $S$ in $I_q^2F/I_q^3F \isom \Brp[2](F)$ has order $2^s$. For $S' \sub S$, let $Q_{S'}$ denote the quaternion algebra similar to $\bigotimes_{Q \in S'} Q$ (which exists by tightness). Let $\varphi_{S'}$ denote the associated norm form, so by definition $\Sigma = \Sigma_S = \sum_{\emptyset \neq S' \sub S} \varphi_{S'}$ in $I^2_qF$.
Each of the summands is a $2$-fold Pfister form, so $\dim(\perp_{\emptyset \neq S' \sub S} \varphi_{S'}) = 4(2^s-1)$.
\begin{prop}\label{updim}
$\dim(\an{\Sigma}) \leq 2^{s+1}$.
\end{prop}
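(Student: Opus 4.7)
The plan is to exploit the fact that every $2$-fold quadratic Pfister form represents $1$, and that in characteristic $2$ the identity $\MPf{a}\perp\MPf{b}\sim\MPf{a+b}$ holds in the Witt group (this is the equality $\MPf{a}+\MPf{b}=\MPf{a+b}$ in $I_q^1F$ used already in \Pref{3.5}).

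The first step is to split off a $1$-fold Pfister subform from each summand. Since each $\varphi_{S'}$ is a nonsingular quadratic form representing $1$, the standard decomposition for nonsingular quadratic forms in characteristic~$2$ gives an isometry
$$\varphi_{S'} \isom [1,c_{S'}] \perp \phi_{S'} = \MPf{c_{S'}} \perp \phi_{S'}$$
for some $c_{S'}\in F$ and some $2$-dimensional nonsingular form $\phi_{S'}$.

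Summing over the $2^s-1$ nonempty subsets of $S$ gives
$$\Sigma \isom \Bigl(\perp_{\emptyset\neq S'\sub S}\MPf{c_{S'}}\Bigr) \perp \Bigl(\perp_{\emptyset\neq S'\sub S}\phi_{S'}\Bigr).$$
The second step is to collapse the $2^s-1$ summands of the first block via the identity $\MPf{a}\perp\MPf{b}\sim\MPf{a+b}$, applied iteratively. Each application of this identity removes one hyperbolic plane, so after $2^s-2$ reductions the first block becomes Witt equivalent to a single $2$-dimensional form $\MPf{c}$ (where $c=\sum_{S'}c_{S'}$), contributing $2^s-2$ hyperbolic planes to the Witt decomposition of $\Sigma$.

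The conclusion follows by arithmetic: the total dimension of $\Sigma$ is $4(2^s-1)$, and we have exhibited Witt index at least $2^s-2$, so
$$\dim(\an{\Sigma}) \leq 4(2^s-1) - 2(2^s-2) = 2^{s+1}.$$
There is no real obstacle here; the only point worth double-checking is that the splitting $\varphi_{S'}\isom[1,c_{S'}]\perp\phi_{S'}$ is legitimate for nonsingular quadratic forms in characteristic~$2$, which is immediate from the standard structure theory recalled in \Sref{sec:2}.
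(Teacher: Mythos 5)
Your proof is correct and is essentially the paper's own argument: split a subform $[1,c_{S'}]$ off each of the $2^s-1$ Pfister summands, collapse these via $[1,t]\perp[1,t']\sim[1,t+t']$ to a single binary form, and count dimensions (the paper writes the bound directly as $2(2^s-1)+2$ rather than subtracting hyperbolic planes from $4(2^s-1)$, but the computation is the same).
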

\begin{proof}
Indeed, each of the $2^s-1$ summands has a subform of the form $[1,*]$, and $[1,t]\perp [1,t'] \sim [1,t+t']$, so $\dim(\an{\Sigma}) \leq 2 (2^s-1) + 2 = 2^{s+1}$. 
\end{proof}

\begin{prop}\label{1ll+}
Let $S = \set{\varphi_1,\dots,\varphi_s}$ be a tight set. Assume that some pair of forms in the group generated by $S$ is left-linked. Then $\dim(\an{\Sigma}) < 2^{s+1}$.
\end{prop}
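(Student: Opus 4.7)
The plan is to leverage the left-linkage hypothesis to force three of the $2^s-1$ Pfister summands that comprise $\Sigma_S$ to cancel in $I_q^2F$, and then to rerun the dimension count of \Pref{updim} on the shortened sum.

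Let $G$ denote the subgroup of order $2^s$ in $I_q^2F/I_q^3F$ generated by the image of $S$, and for each $h \in G$ write $\varphi_h$ for its unique Pfister representative, so that $\Sigma_S = \sum_{h \in G \setminus \set{0}} \varphi_h$ in $I_q^2F$. By hypothesis, I may pick two distinct nonzero elements $g, g' \in G$ whose Pfister representatives $\varphi_g$ and $\varphi_{g'}$ are left-linked. By \Pref{3.5} the pair $\set{\varphi_g, \varphi_{g'}}$ is then strongly tight, so $\varphi_g + \varphi_{g'}$ is (Witt equivalent to) a quadratic $2$-fold Pfister form inside $I_q^2F$; its image in $I_q^2F/I_q^3F$ is $g+g'$, so by \Rref{unique} it must equal $\varphi_{g+g'}$. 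Since $I_q^2F$ is $2$-torsion in characteristic~$2$, rearranging gives
$$\varphi_g + \varphi_{g'} + \varphi_{g+g'} = 0 \qquad \text{in } I_q^2F.$$

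Consequently the three summands indexed by $g$, $g'$, and $g+g'$ drop out of $\Sigma_S$, leaving only the $2^s - 4$ Pfister summands indexed by $h \in G \setminus \set{0, g, g', g+g'}$. Each remaining $\varphi_h$ still contains a subform of the form $[1,*]$, and these collapse into a single $[1,*]$ of dimension~$2$; rerunning the argument of \Pref{updim} on the shortened sum therefore yields
$$\dim(\an{\Sigma}) \leq 2(2^s - 4) + 2 = 2^{s+1} - 6 < 2^{s+1}.$$
In the degenerate case $s = 2$, the subgroup $\set{0, g, g', g+g'}$ already exhausts $G$, so $\Sigma_S = 0$ and the inequality is trivial.

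The only real content is the upgrade from $\varphi_g + \varphi_{g'} \equiv \varphi_{g+g'} \pmod{I_q^3F}$ (which is just uniqueness of the Pfister representative) to the genuine equality $\varphi_g + \varphi_{g'} = \varphi_{g+g'}$ inside $I_q^2F$, which is exactly what the strong tightness of the left-linked pair provides via \Pref{3.5}. There is no serious combinatorial obstacle beyond noting that $\set{0, g, g', g+g'}$ really has four distinct elements, which follows from $g, g' \neq 0$ and $g \neq g'$.
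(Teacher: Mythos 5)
Your proof is correct and follows essentially the same route as the paper's: both cancel the three summands $\varphi_g, \varphi_{g'}, \varphi_{g+g'}$ using the strong tightness of the left-linked pair (you cite \Pref{3.5} plus \Rref{unique}; the paper packages the same content as \Pref{2RL}) and then rerun the dimension count of \Pref{updim} on the remaining $2^s-4$ Pfister summands to get the bound $2^{s+1}-6$.
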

\begin{proof}
By assumption there are distinct nonempty subsets $S',S'' \sub S$ such that $\varphi_{S'}$ and $\varphi_{S''}$ are left-linked. Obviously $Q_{S'\Delta S''} \sim Q_{S'} \tensor Q_{S''}$ (where $\Delta$ is the symmetric difference), but moreover $\varphi_{S' \Delta S''} = \varphi_{S'} + \varphi_{S''}$ in the Witt module by \Pref{2RL}. Canceling these three summands, $\Sigma$ is a sum of $2^s-4$ Pfister forms, so by the argument above, $\dim(\an{\Sigma}) \leq 2(2^s-4)+2 = 2^{s+1}-6 < 2^{s+1}$.
\end{proof}

\Tref{maincor++} shows that if $S$ is right-linked and pairwise left-linked, then it is strongly tight, so in particular $\Sigma = 0$. We can prove more:
\begin{cor}\label{1ll+?}
Assume $S = \set{\varphi_1,\dots,\varphi_s}$ is right-linked, and that some pair of forms in the group generated by $S$ is left-linked. Then $\Sigma_S = 0$.
\end{cor}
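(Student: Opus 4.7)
The plan is to play \Pref{sep} against \Pref{1ll+} and then invoke the rigidity of Pfister forms. Under the running assumption of this subsection, the group generated by $S$ in $I_q^2F/I_q^3F$ has order $2^s$. Because $S$ is right-linked (and hence tight by \Cref{rl->t}), \Pref{sep} applies with $n=2$ and identifies $\Sigma_S$ in the Witt group with an $(s+1)$-fold quadratic Pfister form $\pi$.

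Next I would invoke the Pfister dichotomy recorded in \Sref{sec:2}: a quadratic Pfister form is either anisotropic or hyperbolic. Hence $\dim(\an{\Sigma_S})$ is either $0$ or $2^{s+1}$. On the other hand, the hypothesis that some pair of forms in the group generated by $S$ is left-linked is exactly what is needed to feed \Pref{1ll+}, which produces the strict inequality $\dim(\an{\Sigma_S}) < 2^{s+1}$.

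Combining the two, the only surviving option is $\dim(\an{\Sigma_S}) = 0$, that is, $\pi$ is hyperbolic and $\Sigma_S = 0$. There is essentially no obstacle to overcome: the substantive work has already been done in \Pref{1ll+}, where the cancellation of the three Pfister summands indexed by $S'$, $S''$ and $S'\Delta S''$ (via \Pref{2RL}) creates the dimensional slack that the Pfister dichotomy then amplifies into full vanishing. The only care needed is to verify that \Pref{sep} and \Pref{1ll+} can be applied simultaneously, which follows because right-linkage implies tightness.
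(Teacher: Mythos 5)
Your argument is correct and is essentially identical to the paper's own proof: both combine \Pref{sep} (which exhibits $\Sigma_S$ as an $(s+1)$-fold Pfister form) with the dimension bound of \Pref{1ll+} and the fact that a Pfister form is either anisotropic or hyperbolic. No issues.
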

\begin{proof}
By \Pref{1ll+} $\dim(\an{\Sigma}) < 2^{s+1}$, but by \Pref{sep}, $\Sigma$ is represented by an $(s+1)$-fold Pfister form, which must therefore be hyperbolic.
\end{proof}

\section{Strong tightness does not imply linkage}\label{stnolink}

Fix $n \geq 2$, and let $k$ be any field of characteristic~$2$. Let $E = k(\alpha_0,\dots,\alpha_n)$ be the function field in $n+1$ algebraically independent variables over~$k$.
\begin{thm}\label{main8}
There is a strongly tight set of $n+1$ quadratic $n$-fold Pfister forms over $E$ with no common quadratic of bilinear $1$-fold Pfister subform (in particular the set is neither left- nor right-linked).
\end{thm}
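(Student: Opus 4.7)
My plan is to exhibit an explicit family of $n+1$ quadratic $n$-fold Pfister forms over $E = k(\alpha_0,\dots,\alpha_n)$, verify strong tightness via the pairwise left-linkage criterion of \Pref{cpll}, and then rule out the existence of any common $1$-fold Pfister subform (bilinear or quadratic) by exploiting the genericity of the $\alpha_i$.

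\textbf{Step 1 (construction).} Using the Jacobson-style additive notation for quadratic Pfister forms in characteristic~$2$ promised in the introduction, I would write down $n+1$ explicit forms $\varphi_0,\dots,\varphi_n$ designed so that every pair $(\varphi_i,\varphi_j)$ shares an $(n-1)$-fold bilinear Pfister divisor, while this sharing divisor genuinely depends on the pair. A natural arrangement is to place all $\varphi_i$ inside the ``big'' generic $(n+1)$-fold Pfister form built from $\alpha_0,\dots,\alpha_n$: this provides a reservoir of pairwise bilinear divisors coming from the subsets of $\{\alpha_0,\dots,\alpha_n\}$, without furnishing a single common $(n-1)$-fold divisor.

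\textbf{Step 2 (strong tightness).} By \Pref{cpll}, a tight set is strongly tight iff every two Pfister representatives of elements of the group $G \sub I_q^n E/I_q^{n+1}E$ it generates are left-linked. I would verify this in two parts. First, using the identity $\MPf{\alpha}+\MPf{\beta}=\MPf{\alpha+\beta}$ of \Pref{3.5} together with the bilinear relation $\Pf{\beta_1}+\Pf{\beta_2}\equiv \Pf{\beta_1\beta_2}\pmod{I^2F}$, compute, for each nonempty subset $T\sub\{0,\dots,n\}$, an explicit Pfister representative $\varphi_T$ of $\sum_{i\in T}\varphi_i$ modulo $I_q^{n+1}E$; this both confirms tightness and shows $|G|=2^{n+1}$. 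Second, check pairwise left-linkage of the $\varphi_T$ using \Tref{twoforms}: two right-linked $n$-fold forms are left-linked iff a certain $(n+1)$-fold Pfister form is hyperbolic, and this vanishing is automatic here because every $\varphi_T$ is a divisor of the common generic $(n+1)$-fold form $\MPf{\alpha_0,\dots,\alpha_n}$.

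\textbf{Step 3 (no common $1$-fold divisor).} Suppose, for contradiction, that either a bilinear $\Pf{\beta}$ or a quadratic $\MPf{\gamma}$ divides every $\varphi_i$. Then every $\varphi_i$ becomes hyperbolic over $E'=E(\sqrt{\beta})$ or $E'=E(\wp^{-1}(\gamma))$, respectively. Using the $n+1$ independent $\alpha_i$-adic valuations of $E$ (which extend to $E'$), the residues of the $\varphi_i$ produce nontrivial pairwise-distinct lower-dimensional classes that a single quadratic extension cannot simultaneously annihilate without forcing an algebraic relation among the $\alpha_i$, contradicting their algebraic independence over $k$. Equivalently, via the Kato identification of $I_q^n E/I_q^{n+1}E$ with the quotient $\Omega^{n-1}_E/(\mathrm{d}\Omega^{n-2}_E+\wp\,\Omega^{n-1}_E)$, common divisibility becomes an explicit symbolic identity in differential forms that a direct logarithmic-residue computation along the $\alpha_i$-divisors refutes.

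The principal difficulty is Step~3: ruling out an \emph{abstract} common divisor that is invisible from the chosen presentation. Symbolic manipulation of Pfister forms alone cannot detect genericity, and the crux is to input a genuine obstruction coming from valuation theory (or from Kato's cohomological description of $I_q^n/I_q^{n+1}$). Once the residue calculation is in place, algebraic independence of the $\alpha_i$ does the rest, and the conjunction of Steps~1--3 yields a strongly tight but unlinked set as required.
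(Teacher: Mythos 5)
Your overall architecture (an explicit family written in Jacobson-style additive notation, strong tightness by symbol manipulation, and a valuation-theoretic obstruction to common subforms) matches the paper's, but two of your three steps have genuine problems. In Step~2, the claim that the criterion of \Tref{twoforms} is satisfied ``automatically because every $\varphi_T$ is a divisor of the common generic $(n+1)$-fold form $\MPf{\alpha_0,\dots,\alpha_n}$'' is a non sequitur: over the purely transcendental field $E$ that generic form is \emph{anisotropic}, so dividing it yields no hyperbolicity at all, and the $(n+1)$-fold Pfister forms produced by \Tref{twoforms} need not vanish. (Note also that \Pref{cpll} demands left-linkage of \emph{all} pairs of representatives of the generated group, not merely of the chosen generators.) The paper sidesteps this entirely: it proves that the symbol $((\alpha_1,\dots,\alpha_{n-1},\beta))$ is multi-additive and alternating, takes $\psi_i=((\alpha_0,\dots,\widehat{\alpha_i},\dots,\alpha_n))$, and exhibits for each nonempty $I$ a single symbol $\psi'$ with $\psi'\sim\sum_{i\in I}\psi_i$ as an identity in $I_q^nE$ itself; strong tightness is then immediate from the definition, with no appeal to \Pref{cpll} or \Tref{twoforms}.

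Step~3 is where you correctly locate the crux but do not supply it. Asserting that the residues ``produce nontrivial pairwise-distinct classes that a single quadratic extension cannot simultaneously annihilate'' is the desired conclusion, not an argument, and your reformulation via splitting over $E(\sqrt{\beta})$ or $E(\wp^{-1}(\gamma))$ only addresses common \emph{divisors}, whereas the theorem rules out common $1$-fold Pfister \emph{subforms} (the paper in fact excludes any common $2$-dimensional subform). The missing input is \Lref{monice}: for the $(\alpha_0^{-1},\dots,\alpha_n^{-1})$-adic valuation $\nu$ with value group $\Gamma=\Z^{n+1}$, the value of $\psi_i$ at any anisotropic vector is attained at a single monomial basis vector, whence $\bar\nu(D(\psi_i))=\ker\pi_i$, and (\Cref{monice+}) every $2$-dimensional subspace contains a vector whose value lies outside $2\Gamma$. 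Since $\bigcap_i\ker\pi_i=(2\Z)^{n+1}$, a common $2$-dimensional subform would take all its values in $2\Gamma$, a contradiction. Without this lemma (or a genuinely worked-out substitute in the Kato picture, which you only name), Step~3 does not close, and hence neither does the proof.
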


After developing an additive notation for quadratic Pfister forms, we present the forms $\psi_i$ in \eq{formain8} below, and prove that $S = \set{\psi_0,\dots,\psi_n}$ is strongly tight.  The proof that the $\psi_i$ are not linked is given in \Sref{higherintersect} using valuations.


\subsection{Additive notation for quadratic Pfister forms}

Following Jacobson's additive and symmetric notation for quaternion algebras in characteristic~$2$, we denote $$((\alpha,\beta)) = \MPf{\alpha,\alpha\beta}$$ where $\alpha \in \mul{F}$ and $\beta \in F$. These symbols obviously generate $I_q^2F$ (\cite{Arason06} gives a presentation of $I_q^2F$ in these terms). For the sake of completeness, we set $((0,\beta))$, for any $\beta \in F$, to be equal to the hyperbolic $2$-fold Pfister form.

\begin{prop}\label{rels}
The symbol $((\cdot,\cdot))$ is a biadditive alternating form in~$I_q^2$.
\forget
\begin{enumerate}
\item $((\beta,\alpha)) = ((\alpha,\beta))$;
\item $((\alpha,\beta))+((\alpha,\beta')) = ((\alpha,\beta+\beta'))$;
\item $((\alpha,\alpha)) = 0$.
\end{enumerate}
\forgotten
\end{prop}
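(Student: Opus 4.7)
My plan is to verify that $((\cdot,\cdot))$ is biadditive and alternating by establishing three facts in turn: (i) symmetry $((\beta,\alpha))=((\alpha,\beta))$, (ii) additivity in the second argument, $((\alpha,\beta))+((\alpha,\beta'))=((\alpha,\beta+\beta'))$, and (iii) the alternating identity $((\alpha,\alpha))=0$. Biadditivity then follows from (i) combined with (ii), and the convention $((0,\beta))=0$ (which is consistent with the hyperbolicity of $\MPf{\alpha,0}=\Pf{\alpha}\tensor\MPf{0}$) covers any case in which one of the arguments vanishes.

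I would first dispatch (ii) and (iii), both of which reduce to the characteristic-$2$ identity $\MPf{a}+\MPf{a'}=\MPf{a+a'}$ in $I_q^1F$ invoked in the proof of \Pref{3.5}. For (ii), factor out $\Pf{\alpha}$: $((\alpha,\beta))+((\alpha,\beta'))=\Pf{\alpha}\tensor(\MPf{\alpha\beta}+\MPf{\alpha\beta'})=\Pf{\alpha}\tensor\MPf{\alpha(\beta+\beta')}=((\alpha,\beta+\beta'))$. For (iii), note that $((\alpha,\alpha))=\Pf{\alpha}\tensor\MPf{\alpha^2}$, and that $\MPf{\alpha^2}$ represents $\alpha$: the form $u_1^2+u_1u_2+\alpha^2u_2^2$ takes the value $\alpha$ at $(u_1,u_2)=(\alpha,1)$. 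Multiplicativity of Pfister forms then gives $\alpha\MPf{\alpha^2}\simeq\MPf{\alpha^2}$, so $\Pf{\alpha}\tensor\MPf{\alpha^2}=\MPf{\alpha^2}\perp\alpha\MPf{\alpha^2}\simeq\MPf{\alpha^2}\perp\MPf{\alpha^2}$, which by (ii) is Witt-equivalent to $\MPf{0}$ and hence hyperbolic.

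Next I would prove (i) by exhibiting the symmetry at the level of quaternion algebras. By definition, $((\alpha,\beta))=\MPf{\alpha,\alpha\beta}$ is the norm form of $\QA{\alpha\beta}{\alpha}=F\ideal{x,y \suchthat x^2+x=\alpha\beta,\ y^2=\alpha,\ yxy^{-1}=x+1}$. From $yxy^{-1}=x+1$ one derives $y^{-1}xy=x+1$ in characteristic~$2$, whence $(xy^{-1})^2=x(y^{-1}xy)y^{-2}=x(x+1)\alpha^{-1}=(\alpha\beta)\alpha^{-1}=\beta$. Thus $F[xy^{-1}]$ is an inseparable quadratic subfield, and setting $x'=x$, $y'=xy^{-1}$ yields a presentation of the same algebra as $\QA{\alpha\beta}{\beta}$, whose norm form is $((\beta,\alpha))$; hence $((\alpha,\beta))=((\beta,\alpha))$.

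The main obstacle is step (i): one must spot the right element of $\QA{\alpha\beta}{\alpha}$ whose square is $\beta$, thereby exposing the algebra's hidden symmetry between its two generators. Steps (ii) and (iii) are essentially mechanical once one has the characteristic-$2$ additivity of quadratic $1$-fold Pfister forms together with Pfister-form multiplicativity.
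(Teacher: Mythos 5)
Your proof is correct, and two of its three steps diverge from the paper's argument in instructive ways. The additivity in the second slot is identical to the paper's computation. For the alternating identity, the paper first replaces $[1,\alpha^2]$ by $[1,\alpha]$ via the Artin--Schreier relation $\alpha^2+\alpha\in\wp(F)$ and then observes that $\MPf{\alpha,\alpha}$ is hyperbolic, whereas you stay with $\MPf{\alpha^2}$ and use roundness (the value $\alpha=\MPf{\alpha^2}(\alpha,1)$ is a similarity factor) to collapse $\Pf{\alpha}\tensor\MPf{\alpha^2}$ to $\MPf{\alpha^2}\perp\MPf{\alpha^2}\sim 0$; both are two-line arguments of comparable weight. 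The real difference is in the symmetry step: the paper works entirely at the level of forms, computing $((\beta,\alpha))-((\alpha,\beta))=\Qf{\beta,\alpha}\tensor[1,\alpha\beta]\isom[\beta,\alpha]\perp[\alpha,\beta]\sim 0$ via the isometry $\gamma[1,\delta]\isom[\gamma,\gamma^{-1}\delta]$, while you pass to the quaternion algebra $\QA{\alpha\beta}{\alpha}$ and exhibit the inseparable element $xy^{-1}$ with $(xy^{-1})^2=\beta$, re-presenting the algebra as $\QA{\alpha\beta}{\beta}$ and invoking the fact that isomorphic quaternion algebras have isometric norm forms. Your route is the classical symbol manipulation and fits the quaternion-algebra viewpoint of Section~\ref{quaternion}, but it is specific to $n=2$ and imports the (standard, but external) invariance of the norm form; the paper's direct Witt-group computation is self-contained and is the one that generalizes to the multi-additive symbol $((\alpha_1,\dots,\alpha_{n-1},\beta))$ in the theorem that follows. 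Two small points you should make explicit: verify the relation $y'x'y'^{-1}=x'+1$ for $y'=xy^{-1}$ (it follows from $x(x+1)x^{-1}=x+1$), and note that the degenerate symmetry $((\beta,0))=((0,\beta))$ needs the separate observation $\MPf{\beta,0}\isom\HQ\perp\HQ$, since the algebra presentation breaks down when $\beta=0$.
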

\begin{proof}
%
Additivity in the right slot follows from
\begin{eqnarray*}
((\alpha,\beta))+((\alpha,\beta')) & = & \MPf{\alpha,\alpha\beta}+\MPf{\alpha,\alpha\beta'} \\
& = & \MPf{\alpha,\alpha(\beta+\beta')} \\
& = & ((\alpha,\beta+\beta'))
 \end{eqnarray*}
 when $\alpha \neq 0$, and trivially holds for $\alpha = 0$.
The symbol is symmetric because $((\beta,\alpha)) - ((\alpha,\beta)) = \MPf{\beta,\alpha\beta} -\MPf{\alpha,\alpha\beta} = \Qf{\beta,\alpha}\MPf{\alpha\beta} = \Qf{\beta,\alpha}[1,\alpha\beta] = [\beta,\alpha]\perp [\alpha,\beta] = 0$ when $\alpha,\beta \neq 0$; and $((\beta,0)) = \MPf{\beta,0} = \HQ\perp \HQ = ((0,\beta))$. This shows additivity in the left slot as well.

Finally, $((\alpha,\alpha)) = \MPf{\alpha,\alpha^2} = \MPf{\alpha,\alpha} = 0\in I_q^2F$, so the symbol is alternating.
\end{proof}



More generally,
we denote
$$((\alpha_1,\dots,\alpha_{n-1},\beta)) = \MPf{\alpha_1,\dots,\alpha_{n-1},\alpha_1\cdots\alpha_{n-1}\beta},$$
where $\alpha_1,\dots,\alpha_{n-1} \in \mul{F}$ and $\beta \in F$, and again set $((\alpha_1,\dots,\alpha_{n-1},\beta))$ to be the hyperbolic $n$-fold Pfister form when $\alpha_1\cdots\alpha_{n-1} =0$. Since $\MPf{0} \isom \HQ$, $((\alpha_1,\dots,\alpha_{n-1},\beta)) = 0$ if one of the entries is zero. %
Clearly, every quadratic $n$-fold Pfister form has this presentation.

\begin{thm}
The function $((\cdot,\dots,\cdot)) \co F\times \cdots \times F \rightarrow I_q^nF$ is multi-additive and alternating.
\end{thm}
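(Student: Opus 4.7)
The plan is to reduce both claims to the $n=2$ case (\Pref{rels}) via the peel-off identity
\begin{equation}\label{peeloff}
((\alpha_1,\dots,\alpha_{n-1},\beta)) = \Pf{\alpha_1,\dots,\hat\alpha_i,\dots,\alpha_{n-1}}\tensor((\alpha_i,\alpha_1\cdots\hat\alpha_i\cdots\alpha_{n-1}\beta)),
\end{equation}
valid for every $i\in\set{1,\dots,n-1}$. This follows at once by unfolding each side to $\Pf{\alpha_1,\dots,\alpha_{n-1}}\tensor[1,\alpha_1\cdots\alpha_{n-1}\beta]$. When some entry vanishes, both sides are hyperbolic by the extension convention, so edge cases need no special treatment.

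For multi-additivity in the $i$-th slot with $i\leq n-1$, I would fix the remaining entries and apply \eq{peeloff}; biadditivity of $((\cdot,\cdot))$ in its first slot (\Pref{rels}) transfers directly to additivity of the $n$-fold symbol in slot $i$. For additivity in the $\beta$-slot, the unfolded formula $\Pf{\alpha_1,\dots,\alpha_{n-1}}\tensor[1,\alpha_1\cdots\alpha_{n-1}\beta]$ together with $[1,t]+[1,t']=[1,t+t']$ in $I_q^1F$ gives the claim at once.

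For alternating there are two cases. If $\alpha_i=\alpha_j$ for distinct $i,j\in\set{1,\dots,n-1}$, the bilinear factor $\Pf{\alpha_1,\dots,\alpha_{n-1}}$ contains $\Pf{\alpha_i,\alpha_i}$; in characteristic~$2$ this decomposes as $\inn{1,\alpha_i^2}\perp\inn{\alpha_i,\alpha_i}$, both planes being metabolic (with isotropic vectors $(\alpha_i,1)$ and $(1,1)$), so the full $n$-fold form is Witt-trivial. If instead $\beta=\alpha_i$ for some $i$, the symbol expands as
\begin{equation*}
((\alpha_1,\dots,\alpha_{n-1},\alpha_i)) = \Pf{\alpha_1,\dots,\alpha_{n-1}}\tensor[1,c\alpha_i], \qquad c=\alpha_1\cdots\alpha_{n-1},
\end{equation*}
and each subset $J\sub\set{1,\dots,n-1}$ contributes the summand $\alpha_J[1,c\alpha_i]\isom[\alpha_J,\alpha_i\alpha_{\bar J}]$, where $\bar J$ is the complement. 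Its mate $\alpha_{\bar J}[1,c\alpha_i]\isom[\alpha_{\bar J},\alpha_i\alpha_J]$ has the same Arf invariant $\alpha_ic$; since binary nonsingular forms in characteristic~$2$ are classified by their Arf invariant, the two summands are isometric and their sum is $2q=0$ in $I_q^1F$. For $n\geq 2$ the involution $J\mapsto\bar J$ has no fixed points, so the pairing exhausts all $2^{n-1}$ subsets and the total form is hyperbolic.

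The main obstacle is this last subset-complement cancellation, which hinges on the characteristic-$2$ identity $2q=0$ for $q\in I_q^1F$; every other step is a routine transfer of \Pref{rels} through \eq{peeloff}. As a byproduct, multi-additivity together with the alternating property forces the symbol to be symmetric in all $n$ arguments, since in characteristic~$2$ antisymmetry coincides with symmetry.
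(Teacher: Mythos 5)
Your peel-off reduction of multi-additivity to \Pref{rels} is correct, and it is a genuinely different (and slightly slicker) route than the paper's, which instead proves symmetry of the last two slots directly via a similarity-factor computation and then combines symmetry with additivity in the final slot. The first alternating case ($\alpha_i=\alpha_j$ with $i,j\leq n-1$, metabolic bilinear factor) is also fine. The gap is in the case $\beta=\alpha_i$.

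There you assert that binary nonsingular quadratic forms in characteristic~$2$ are classified by their Arf invariant. This is false: $[a,b]\isom c[1,ab]$ for any represented value $c$, and $c[1,\delta]\isom c'[1,\delta]$ only when $c'/c$ is a norm from $F[\wp^{-1}(\delta)]$ --- if the Arf invariant alone sufficed, every quaternion algebra in characteristic~$2$ would split. Concretely, take $n=3$, $i=1$, $J=\set{1}$ over $\F_2(\alpha_1,\alpha_2)$: your mates are $[\alpha_1,\alpha_1\alpha_2]$ and $[\alpha_2,\alpha_1^2]$, whose orthogonal sum is $\Qf{\alpha_1,\alpha_2}\tensor[1,\alpha_1^2\alpha_2]\sim\alpha_1\MPf{\alpha_1\alpha_2,\alpha_1^2\alpha_2}\sim\alpha_1\MPf{\alpha_1,\alpha_1\alpha_2}$, a scalar multiple of the anisotropic norm form of a generic quaternion algebra. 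So your pairs do not cancel individually; only the total vanishes. The involution you want is not $J\mapsto\bar J$ but $J\mapsto J\,\Delta\,(\set{1,\dots,n-1}\setminus\set{i})$: since $c\alpha_i$ is a represented value, hence a similarity factor, of the Pfister form $[1,c\alpha_i]$, and $c\alpha_i\equiv\alpha_{\set{1,\dots,n-1}\setminus\set{i}}$ modulo squares, the summands indexed by $J$ and $J\,\Delta\,(\set{1,\dots,n-1}\setminus\set{i})$ really are isometric, and this involution is fixed-point free for $n>2$. Alternatively, do as the paper does: establish symmetry between the last two slots first, then (using $n>2$) move both equal entries into the bilinear part. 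Note that your closing remark, deriving symmetry from multi-additivity plus the alternating property, depends on exactly the case that is broken, so it cannot be invoked to repair the argument without circularity.
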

\begin{proof}
For $n = 1$, namely $((\alpha)) = \MPf{\alpha}$, there is nothing to prove. For $n = 2$  we proved the relations in \Pref{rels}. So we may assume $n > 2$.
The symbol is clearly symmetric in the first $n-1$ entries. To show that the symbol is symmetric, it suffices to check for symmetry of the final two entries. Indeed, when $\alpha_1,\dots,\alpha_{n-1},\beta \neq 0$, set $\alpha = \alpha_1\cdots\alpha_{n-2}$ and $b = \Pf{\alpha_1,\dots,\alpha_{n-2}}$. Since $\alpha$ is a value of $b$, it is a similarity factor, so $b[\alpha_{n-1},\alpha\beta] = b\alpha[\alpha_{n-1},\alpha\beta] = b[\alpha\alpha_{n-1},\beta]$; therefore
\begin{eqnarray*}
&& ((\alpha_1,\dots,\alpha_{n-2},\alpha_{n-1},\beta)) - ((\alpha_1,\dots,\alpha_{n-2},\beta,\alpha_{n-1})) \\
&    = & \MPf{\alpha_1,\dots,\alpha_{n-2},\alpha_{n-1},\alpha_1\cdots\alpha_{n-1}\beta} - \MPf{\alpha_1,\dots,\alpha_{n-2},\beta,\alpha_1\cdots\alpha_{n-1}\beta} \\
& = & b (\Qf{\alpha_{n-1}}[1,\alpha \alpha_{n-1}\beta] - \Qf{\beta}[1,\alpha\alpha_{n-1}\beta]) \\
& = & b ([\alpha_{n-1},\alpha \beta] - [\beta,\alpha\alpha_{n-1}]) = 0;
\end{eqnarray*}

\forget
\begin{eqnarray*}
&& ((\alpha_1,\dots,\alpha_{n-2},\alpha_{n-1},\beta)) - ((\alpha_1,\dots,\alpha_{n-2},\beta,\alpha_{n-1})) \\
&    = & \MPf{\alpha_1,\dots,\alpha_{n-2},\alpha_{n-1},\alpha_1\cdots\alpha_{n-1}\beta} - \MPf{\alpha_1,\dots,\alpha_{n-2},\beta,\alpha_1\cdots\alpha_{n-1}\beta} \\
& = & \Qf{\alpha_{n-1},-\beta}\MPf{\alpha_1,\dots,\alpha_{n-2},\alpha_1\cdots\alpha_{n-1}\beta} \\
& = & \Pf{\alpha_1,\dots,\alpha_{n-2}}\Qf{\alpha_{n-1},-\beta}\MPf{\alpha_1\cdots\alpha_{n-1}\beta} \\
& = & \Pf{\alpha_1,\dots,\alpha_{n-2}}([\alpha_{n-1},\alpha_1\cdots\alpha_{n-2}\beta] + [\beta,\alpha_1\cdots\alpha_{n-1}]) \\
& = & \Pf{\alpha_1,\dots,\alpha_{n-2}}([\alpha_{n-1},\alpha\beta] + [\beta,\alpha\alpha_{n-1}]) \\
& = & \sum_{S\sub \set{1,\dots,n-2}} ([\alpha_S\alpha_{n-1},\alpha_{S^c}\beta] + [\alpha_S\beta,\alpha_{S^c}\alpha_{n-1}]) \\
& = & \sum [\alpha_S\alpha_{n-1},\alpha_{S^c}\beta] + \sum [\alpha_{S^c}\beta,\alpha_{S}\alpha_{n-1}] = 0;
\end{eqnarray*}
\forgotten
and if $\alpha_1\cdots \alpha_{n-1} \beta = 0$ then $((\alpha_1,\dots,\alpha_{n-2},\alpha_{n-1},\beta))$ and $((\alpha_1,\dots,\alpha_{n-2},\beta,\alpha_{n-1}))$ are both hyperbolic. Since the symbol is additive in the final entry, symmetry implies multi-additivity.

It remains to prove that the form is alternating, and since $n>2$ it suffices to notice that $\Pf{\alpha,\alpha} = 0$, which is the case because $\Qf{\alpha,\alpha} \sub \Pf{\alpha,\alpha}$ and $\Qf{\alpha,\alpha}$ is isotropic.
\end{proof}

We now prove the first claim of \Tref{main8}. We take the forms
\begin{equation}\label{formain8}
\psi_i = ((\alpha_0,\dots,\alpha_{i-1},\alpha_{i+1},\dots,\alpha_{n})),
\end{equation}
where $i = 0,\dots,n$. To show that $\set{\psi_0,\dots,\psi_n}$ is strongly tight,
we need to show that the sum of every subset of the generators $\psi_0,\dots,\psi_{n+1}$ is represented in the Witt module by a single Pfister form. Let $I$ be a nonempty subset of the index set $\set{0,\dots,n}$. Choose $i_0 \in I$. Consider the $n$-fold Pfister form $\psi' = ((\cdots))$, whose entries are the $\alpha_i$ for $i \not \in I$, and the sums $\alpha_{i_0}+\alpha_i$ for $i \in I \setminus\set{i_0}$. For example if $n = 5$ and $I = \set{0,1,3}$, we may choose $i_0 = 0$, and then $\psi' = ((\alpha_2,\alpha_4,\alpha_0+\alpha_1,\alpha_0+\alpha_3))$. By additivity and the fact that when $\alpha_{i_0}$ appears at least twice the form cancels, we see that $\psi' \sim  \sum_{i \in I} \psi_i$.

\section{Valuations and Pfister forms}\label{higherintersect}

In this section we complete the proof of \Tref{main8} by showing that the forms $\psi_0,\dots,\psi_n$ of \Eq{formain8} do not have a $2$-dimensional common subform.

We begin with a useful lemma on monomial values.
Let $F$ be a field of characteristic~$2$, with a valuation $\nu \co F \ra \Gamma$, where $\Gamma$ is a totally ordered abelian group.
For a quadratic form $\varphi$ defined on an $F$-vector space $W$, we let $\bar{\nu}(D(\varphi)) \sub \Gamma/2\Gamma$ denote the set of values under $\nu$, modulo $2$, of the elements $\varphi(w) \in \mul{F}$, ranging over the anisotropic vectors $w \in W$.

Fix $k \geq 1$. Consider the $2$-dimensional space $W_1 = Fe_0+Fe_1$, and inductively set $W_j = W_1 \tensor W_{j-1}$, where $j = 2,\dots,k$. For each $j$, $W_j$ has the natural monomial basis composed of the vectors $e_{\bf{i}} = e_{i_j} \tensor \cdots \tensor e_{i_1}$ for the $2^j$ index functions ${\bf{i}} \in \set{0,1}^j$.

Fix $\beta_1,\dots,\beta_k \in \mul{F}$. For $1 \leq j \leq k$, we define the forms $\varphi_j$ on $W_j$ by induction: for $j = 1$ we take $\varphi_1(w_0 e_0+ w_1 e_1) = w_0^2+w_0w_1+\beta_1 w_1^2$ ($w_0,w_1 \in F$), and for $j \geq 2$ we set $\varphi_j(e_0 \tensor w_0 + e_1 \tensor w_1) = \varphi_{j-1}(w_0) + \beta_j \varphi_{j-1}(w_1)$ ($w_0,w_1 \in W_{j-1})$. Thus $\varphi_j  \isom \MPf{\beta_j,\dots,\beta_1}$.


\begin{lem}\label{monice}
Assume $\nu(\beta_1) < 0$ and the images of ${\nu(\beta_1)},{\nu(\beta_2)},\dots,{\nu(\beta_k)}$ in the quotient group $\Gamma/2\Gamma$ are linearly independent (over $\F_2$). Let $\varphi = \MPf{\beta_k,\dots,\beta_1}$. Then:
\begin{enumerate}
\item For every nonzero $w = \sum_{{\bf{i}}\in \set{0,1}^k}  w_{\bf{i}} e_{\bf{i}} \in W$ (where $w_{\bf{i}} \in F$), $\nu(\varphi(w)) = \nu(\varphi(w_{\bf{i}} e_{\bf{i}}))$ for some ${\bf{i}}$ for which $w_{\bf{i}} \neq 0$;
\item $\varphi$ is anisotropic;
\item $\bar{\nu}(D(\varphi))$ is the subspace of $\Gamma/2\Gamma$ spanned by $\nu(\beta_1),\dots,\nu(\beta_k)$.
\end{enumerate}
\end{lem}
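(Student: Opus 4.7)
The plan is to prove the three claims simultaneously by induction on $k$, where claim~(1) is the main engine and claims~(2) and~(3) follow easily from~(1).

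For the base case $k = 1$, I would directly analyze $\varphi_1(w_0 e_0 + w_1 e_1) = w_0^2 + w_0 w_1 + \beta_1 w_1^2$. Setting $a = 2\nu(w_0)$ and $b = 2\nu(w_1) + \nu(\beta_1)$, these two valuations lie in different cosets of $2\Gamma$ (since $\nu(\beta_1) \notin 2\Gamma$), so in particular $a \neq b$. The cross term has valuation $\nu(w_0) + \nu(w_1) = (a+b-\nu(\beta_1))/2$, and the inequality $|a-b| > \nu(\beta_1)$ (which is immediate because $\nu(\beta_1) < 0$) rearranges to $(a+b-\nu(\beta_1))/2 > \min(a,b)$. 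Hence the cross term is strictly dominated, and $\nu(\varphi_1(w)) = \min(a,b)$, which is the valuation of one of the monomial contributions.

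For the inductive step, I would use the fact that the recursion $\varphi_j(e_0 \tensor u_0 + e_1 \tensor u_1) = \varphi_{j-1}(u_0) + \beta_j \varphi_{j-1}(u_1)$ is an \emph{orthogonal} decomposition with no cross term. Writing $w = e_0\otimes u_0 + e_1 \otimes u_1$ with $u_0, u_1 \in W_{k-1}$, the induction hypothesis says that when $u_i \neq 0$, the valuation of $\varphi_{k-1}(u_i)$ lies (modulo $2\Gamma$) in the span of $\nu(\beta_1),\dots,\nu(\beta_{k-1})$. Because $\nu(\beta_k)$ is linearly independent of these modulo $2\Gamma$, the valuations of $\varphi_{k-1}(u_0)$ and $\beta_k\varphi_{k-1}(u_1)$ lie in distinct cosets of $2\Gamma$ and are therefore unequal, so their sum has valuation equal to the minimum of the two. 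Unwinding the induction hypothesis for whichever term achieves the minimum exhibits the monomial index ${\bf i} \in \{0,1\}^k$ (by prepending $0$ or $1$ to the index given by the hypothesis) that realizes $\nu(\varphi_k(w))$; the boundary cases $u_0=0$ or $u_1=0$ are handled the same way but trivially. This completes~(1).

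Claim~(2) is then immediate: a nonzero $w$ has $\nu(\varphi(w))$ well defined, so $\varphi(w) \neq 0$. For~(3), the inclusion $\bar\nu(D(\varphi)) \subseteq \langle\nu(\beta_1),\dots,\nu(\beta_k)\rangle$ follows from~(1) together with $\nu(\varphi(w_{\bf i}e_{\bf i})) = 2\nu(w_{\bf i}) + \sum_{j:i_j=1}\nu(\beta_j)$, and the reverse inclusion comes from evaluating on the basis vectors $e_{\bf i}$, which realize every such sum.

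The only delicate step is the base case: it is the single point at which a quadratic cross term survives (the higher steps are clean orthogonal sums), and the hypothesis $\nu(\beta_1) < 0$ is used precisely to push this cross term above the two squared terms. After that the induction is purely formal and relies only on the linear independence hypothesis modulo $2\Gamma$.
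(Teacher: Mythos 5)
Your proposal is correct and follows essentially the same route as the paper: a simultaneous induction on $k$ in which claim (1) is the engine, the base case is settled by showing the cross term $w_0w_1$ cannot attain the (unique, since $\nu(\beta_1)\notin 2\Gamma$) minimum of $\nu(w_0^2)$ and $\nu(\beta_1 w_1^2)$ because $\nu(\beta_1)<0$, and the inductive step uses the orthogonal recursion together with linear independence modulo $2\Gamma$ to see that the two summands lie in distinct cosets. The only (harmless) difference is cosmetic: the paper normalizes to $w_1^2(t^2+t+\beta_1)$ and splits into cases on $\nu(t)$, whereas you compare the three monomial valuations directly; just record explicitly, as the paper does, that the base case with $w_0w_1=0$ is immediate.
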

\begin{proof}
It suffices to prove the first claim, which we do by induction. For $k = 1$, $\varphi_1(w_0e_0+w_1e_1) = w_0^2+w_0w_1+\beta_1 w_1^2$. If $w_0w_1= 0$ the first claim is immediate, so let $t = w_1^{-1}w_0$ and write $\varphi_1(w_0e_0+w_1e_1) = w_1^2(t^2+t+\beta_1)$. Let $\delta = \nu(t)$. If $0 \leq \delta$ then clearly $\nu(\beta_1) < 0 \leq \nu(t) \leq \nu(t^2)$, so $\beta_1$ is the monomial with least value in $t^2+t+\beta_1$ and $\nu(\varphi_1(w_0e_0+w_1e_1)) = \nu(\varphi_1(w_1e_1))$. Otherwise $\delta < 0$, so $\nu(t^2)< \nu(t)$, but $\nu(\beta_1) \neq \nu(t^2) \in 2\Gamma$ and the  minimum of $\nu(t^2)$, $\nu(t)$ and $\nu(\beta_1)$ is obtained only once, so $\nu(\varphi_1(w_0e_0+w_1e_1)) = \nu(\varphi_1(w_1e_1))$ or $\nu(\varphi_1(w_0e_0+w_1e_1)) = \nu(w_1^2t^2) = \nu(w_0^2) = \nu(\varphi_1(w_0e_0))$. 

Assume the claim holds for $\varphi_{k-1} = \MPf{\beta_{k-1},\dots,\beta_1}$. Recall that for $w_0,w_1 \in W_{k-1}$ and $w = e_0 \tensor w_0 + e_1 \tensor w_1$, we have that $\varphi_k(w) = \varphi_{k-1}(w_0) + \beta_k \varphi_{k-1}(w_1)$. If $\varphi_0(w_0) = 0$ or $\varphi(w_1) = 0$, then since $\varphi_{k-1}$ is isotropic, we have that $w = e_1 \tensor w_1$ or $w = e_0\tensor w_0$, respectively, and we are done by induction. Otherwise the induction hypothesis shows that $\bar{\nu}(\varphi_{k-1}(w_0)), \bar{\nu}(\varphi_{k-1})(w_1)$ are distinct by linear independence, so $\bar{\nu}(\varphi_{k}(w))$ equals one of them.
\end{proof}


\begin{cor}\label{monice+}
Under the assumptions of \Lref{monice}, for every $2$-dimensional subspace $U \sub W$, $\bar{\nu}(D(\varphi|_U)) \neq 0$.
\end{cor}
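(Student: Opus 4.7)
The plan is to argue by contradiction: assume that every nonzero $u \in U$ satisfies $\nu(\varphi(u)) \in 2\Gamma$ (equivalently $\bar{\nu}(\varphi(u)) = 0$), and then derive a linear-algebraic impossibility.

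First I would unwind the inductive definition of $\varphi = \varphi_k$ to see that $\varphi(e_{\bf{i}}) = \prod_{j\,:\, i_j = 1} \beta_j$ for every monomial basis vector $e_{\bf{i}}$. By the standing linear independence of $\bar{\nu}(\beta_1),\dots,\bar{\nu}(\beta_k)$ in $\Gamma/2\Gamma$, the $2^k$ residues $\bar{\nu}(\varphi(e_{\bf{i}})) = \sum_{j\,:\, i_j = 1} \bar{\nu}(\beta_j)$ are pairwise distinct, and the only one that vanishes is the one with $\bf{i} = \mathbf{0}$.

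Next, for any nonzero $u = \sum_{\bf{i}} u_{\bf{i}} e_{\bf{i}}$ in $U$, \Lref{monice}(1) supplies an index $\bf{i}$ with $u_{\bf{i}} \neq 0$ and $\nu(\varphi(u)) = 2\nu(u_{\bf{i}}) + \nu(\varphi(e_{\bf{i}}))$, hence $\bar{\nu}(\varphi(u)) = \bar{\nu}(\varphi(e_{\bf{i}}))$ in $\Gamma/2\Gamma$. The contradictory assumption forces this residue to be $0$, and by the previous paragraph this pins down $\bf{i} = \mathbf{0}$, so $u_{\mathbf{0}} \neq 0$. (Here I am using \Lref{monice}(2), which guarantees $\varphi(u) \neq 0$ so that $\nu(\varphi(u))$ is defined in the first place.) Consequently the coordinate projection $U \to F$, $u \mapsto u_{\mathbf{0}}$, would be an $F$-linear map with trivial kernel, which is impossible since $\dim_F U = 2 > 1 = \dim_F F$.

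The only real work is to combine \Lref{monice}(1) with the linear independence hypothesis so as to pin down the dominant monomial as $e_{\mathbf{0}}$ for every supposed counterexample; after that is established, the contradiction is the trivial linear-algebra observation that a $2$-dimensional space cannot embed injectively into a $1$-dimensional one.
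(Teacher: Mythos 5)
Your proof is correct and is essentially the paper's argument in contrapositive form: the paper directly picks a nonzero $w \in U$ with $w_{\mathbf{0}} = 0$ (possible since $\dim U = 2$) and applies \Lref{monice}(1) plus linear independence to get $\bar{\nu}(\varphi(w)) = \bar{\nu}(\varphi(e_{\bf{t}})) \neq 0$, which is exactly the same combination of facts you use to show the coordinate projection would have to be injective. No gap; the two write-ups differ only in packaging.
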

\begin{proof}
Since $U$ is 2-dimensional, it contains a nonzero element $w$ in which the coefficient of $e_{\bf{0}} = e_0 \tensor \cdots \tensor e_0$ is zero.
Then by the previous lemma, $\nu(\varphi(w))=\nu(\varphi(w_{\bf{t}} e_{\bf{t}}))=\nu(w_{\bf{t}}^2) + \nu(e_{\bf{t}}) \in \nu(e_{\bf{t}}) + 2\Gamma$ for some ${\bf{t}} \neq {\bf{0}}$. But $\nu(e_{\bf{t}})$ is a nonempty partial sum of $\nu(\beta_1),\dots,\nu(\beta_k)$ which are linearly independent, so it is not in~$2\Gamma$.
\end{proof}

To complete the proof of \Tref{main8}, it remains to show that the $n$-fold Pfister forms $\psi_i$ of \Eq{formain8} do not share a common two-dimensional subform.
\begin{proof}[Completion of the proof of \Tref{main8}]
\sloppy We endow $E = k(\alpha_0,\dots,\alpha_n)$ with the $(\alpha_0^{-1},\dots,\alpha_n^{-1})$-adic valuation whose value set is
$\Gamma = \Z^{n+1}$, with some total ordering. Let $\pi_i \co \Gamma \ra \Z/2\Z$ be the projection on the $i$th component, modulo $2$.
For each $i$, the form
$$\psi_i = \MPf{\alpha_0,\dots,\widehat{\alpha_i},\dots,\alpha_{n-2},\alpha_0\cdots\widehat{\alpha_i}\cdots\alpha_{n-1}}$$
satisfies the conditions of \Lref{monice}, so by \Lref{monice}.3, $\nu(D(\psi_i)) = \ker(\pi_i)$. Therefore, $\bigcap \nu(D(\psi_i)) = \bigcap \ker \pi_i = (2\Z)^{n+1}$.  Let $U \sub W$ be a two-dimensional subspace, and let $\rho_i$ be the restriction of $\psi_i$ to $U$. If we assume all the $\rho_i$ are isometric to a form $\rho$, then $D(\rho) \sub D(\psi_i)$ forces $\nu(D(\rho)) \sub (2\Z)^{n+1}$, which contradicts \Cref{monice+}.
\end{proof}

This proves the second claim in \Tref{main8}.

\section{Triplets of quaternion algebras}\label{quaternionCE}

In this section we consider triplets of quaternion algebras. We show how the invariant can be used to prove the `additivity' of left-linkage from the right-linkage of a triplet, and provide an example where the invariant of a tight triplet is nonzero in $I_q^3F/I_q^4F$.

\subsection{Pairwise right-linked triplets}


Let $Q_i = \QA{\alpha_i}{\beta_i}$, $i = 1,2,3$, be three quaternion algebras over a field $F$ of characteristic~$2$, and $\varphi_i = \MPf{\beta_i,\alpha_i}$ the respective norm forms. In order for the invariant $\Sigma_{\varphi_1,\varphi_2,\varphi_3}$ to be defined, the triplet must be tight. In this case, there are quaternion algebras $Q_{ij}$ and $Q_{123}$ such that $Q_{ij} \sim Q_i \tensor Q_j$ and $Q_{123} \sim Q_1 \tensor Q_2 \tensor Q_3$. By definition the invariant is
\begin{equation}\label{S3}
\Sigma = \varphi_1 \perp \varphi_2 \perp \varphi_3 \perp \varphi_{12} \perp \varphi_{13} \perp \varphi_{23} \perp \varphi_{123},
\end{equation}
where each of the summands is the norm form of the respective algebra, namely a $2$-fold Pfister form.
Recall that by \Pref{updim}, $\dim(\an{\Sigma}) \leq 16$.

\begin{rem}
Assume $\varphi_1,\varphi_2,\varphi_3$ are pairwise right-linked (without assuming tightness).

The algebras $Q_{ij}$ are defined, but $Q_{123}$ may not be. The triplet is tight if and only if $\ind(Q_1 \tensor Q_2 \tensor Q_3) \leq 2$, or equivalently $\ind(Q_i \tensor Q_{jk}) \leq 2$ (where $\set{i,j,k} = \set{1,2,3}$). This is equivalent to the Albert form $\Alb_i$ of $Q_i \tensor Q_{jk}$ being isotropic (see \Pref{Albert}).
\end{rem}

\forget
\begin{prop}\label{1ll}
Assume $Q_1,Q_2,Q_3$ is right-linked and that the pair $Q_1,Q_2$ is left-linked. Then $\Sigma = 0$.
\end{prop}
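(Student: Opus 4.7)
The plan is to combine the explicit formula for $\Sigma$ that \Pref{sep} supplies for a right-linked set with the hyperbolicity criterion for left-linkage provided by \Tref{twoforms}. Right-linkage already guarantees tightness by \Cref{rl->t}, so $\Sigma$ is well defined, and it allows us to present all three norm forms with a common quadratic $1$-fold factor, $\varphi_i = \Pf{\beta_i} \tensor \MPf{\alpha}$; equivalently $Q_i = \QA{\alpha}{\beta_i}$ for a fixed $\alpha$ and scalars $\beta_1,\beta_2,\beta_3 \in \mul{F}$.

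Applying \Pref{sep} with $n=2$ and $s\le 3$ then gives, as an equality in the Witt module $I_q^2F$,
\[
\Sigma \;=\; \Pf{\beta_1,\beta_2,\beta_3}\tensor \MPf{\alpha} \;=\; \Pf{\beta_3}\tensor\bigl(\Pf{\beta_1,\beta_2}\tensor \MPf{\alpha}\bigr) \;=\; \Pf{\beta_3}\tensor \MPf{\beta_1,\beta_2,\alpha}.
\]
The pair $\varphi_1,\varphi_2$ is itself right-linked via the very same slot $\alpha$ (any common right divisor of the triplet is a fortiori a common right divisor of any subpair), so \Tref{twoforms} applies to it and the hypothesis that $Q_1,Q_2$ are left-linked translates directly into hyperbolicity of the $3$-fold Pfister form $\MPf{\beta_1,\beta_2,\alpha}$. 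Plugging this back into the factorization above collapses $\Sigma$ to zero.

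There is no substantive obstacle: the one point that warrants a sentence is the compatibility of the chosen right-linking witness $\alpha$ between the triplet and the subpair, which is immediate. We remark in passing that this proposition is a special case of \Cref{1ll+?}, whose proof in the paper follows a completely different route, deducing $\Sigma=0$ from the dimension bound $\dim(\an{\Sigma}) < 2^{s+1}$ together with the fact that an $(s+1)$-fold Pfister form of strictly smaller anisotropic dimension than $2^{s+1}$ must be hyperbolic.
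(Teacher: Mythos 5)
Your proof is correct, but it takes a genuinely different route from the paper's. The paper argues by dimension count: since $Q_1,Q_2$ are left-linked, the pair $\varphi_1,\varphi_2$ is strongly tight (\Pref{2RL}), so $\varphi_{12}=\varphi_1\perp\varphi_2$ in the Witt module and three of the seven summands of $\Sigma$ cancel, leaving a sum of four $2$-fold Pfister forms with $\dim(\an{\Sigma})\leq 4\cdot 2+2=10<16$; as \Pref{sep} makes $\Sigma$ the class of a $4$-fold Pfister form, it must be hyperbolic. You instead exploit the explicit factorization $\Sigma=\Pf{\beta_1,\beta_2,\beta_3}\tensor\MPf{\alpha}=\Pf{\beta_3}\tensor\MPf{\beta_1,\beta_2,\alpha}$ coming from the proof of \Pref{sep}, and kill the factor $\MPf{\beta_1,\beta_2,\alpha}$ by \Tref{twoforms}; this is essentially the divisibility argument the paper uses later for \Tref{maincor++}, and your identification of the right-linking slot $\alpha$ for the subpair is indeed unproblematic. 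Each approach has its advantages: yours is more explicit and avoids any anisotropy estimate, but it uses the global right-linkage presentation and the characteristic-$2$ criterion of \Tref{twoforms} applied to a pair of \emph{generators}; the paper's count is softer and scales directly to \Cref{1ll+?} (an arbitrary left-linked pair anywhere in the generated group, arbitrary $s$) and to \Pref{1ll+}, where right-linkage is dropped and one still extracts the bound $\dim(\an{\Sigma})<2^{s+1}$. The only caveat worth a clause in either write-up is that the formula $\Sigma=\Pf{\beta_1,\beta_2,\beta_3}\tensor\MPf{\alpha}$ presumes the three classes are independent modulo $I_q^{3}F$ (as does the paper's seven-term expression for $\Sigma$); in the degenerate cases \Pref{sep} still yields a form divisible by $\MPf{\beta_1,\beta_2,\alpha}$, so your conclusion persists.
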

\begin{proof}
By assumption $\varphi_{12} = \varphi_1 + \varphi_2$ in the Witt module. Therefore,
$$\Sigma = \varphi_3 \perp \varphi_{13} \perp \varphi_{23} \perp \varphi_{123},$$
so by the argument above, $\dim(\an{\Sigma}) \leq 4 \cdot 2 + 2 \leq 10$. But $\Sigma$ is a $4$-fold Pfister form by \Pref{sep}, which proves that $\Sigma = 0$.
\end{proof}
(\Tref{maincor++} shows that if $Q_1,Q_2,Q_3$ is right-linked and pairwise left-linked, then it is strongly tight, so in particular $\Sigma = 0$).
((Stronger version: a right-linked set of quaternions, with one pair of generators which is left-linked, has $\Sigma = 0$, by the argument of \Pref{1ll})).
\forgotten

\forget 
\begin{rem}
Assume $Q_1,Q_2,Q_3$ is tight, so in particular pairwise right-linked. Assume furthermore that $Q_1,Q_2$ are left-linked. It follows that $\varphi_{12} = \varphi_1 + \varphi_2$ in the Witt module. Therefore,
$$\Sigma = \varphi_3 \perp \varphi_{13} \perp \varphi_{23} \perp \varphi_{123},$$
so by the argument above, $\dim(\an{\Sigma}) \leq 4 \cdot 2 + 2 \leq 10$. If we assume further that $Q_1,Q_2,Q_3$ is right-linked, then $\Sigma$ is a $4$-fold Pfister form by \Pref{sep}, which proves that in this case $\Sigma = 0$.
\end{rem}
\forgotten

\begin{lem}\label{lem}
Assume $\varphi_1,\varphi_2,\varphi_3$ is a tight triplet of $2$-fold Pfister forms.
If the pairs $\varphi_1,\varphi_2$ and $\varphi_1,\varphi_3$ are left-linked, then $\Sigma_{\varphi_1,\varphi_2,\varphi_3} = \Sigma_{\varphi_1,\varphi_{23}}$.
\end{lem}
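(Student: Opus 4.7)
The plan is to unwrap both sides of the claimed equality as sums in $I_q^2F$ and match them term by term, using that left-linkage upgrades a tight pair to a strongly tight pair.

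First I would expand the left-hand side using \eq{S3}:
\[
\Sigma_{\varphi_1,\varphi_2,\varphi_3} \;=\; \varphi_1 \perp \varphi_2 \perp \varphi_3 \perp \varphi_{12} \perp \varphi_{13} \perp \varphi_{23} \perp \varphi_{123}.
\]
Since the pair $\varphi_1,\varphi_2$ is left-linked, \Pref{2RL} gives that $\{\varphi_1,\varphi_2\}$ is strongly tight, i.e.\ $\varphi_1+\varphi_2 \in P_2(F)$; by the uniqueness of Pfister representatives (\Rref{unique}), the only candidate is $\varphi_{12}$, so $\varphi_{12}=\varphi_1+\varphi_2$ in $I_q^2F$. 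The same reasoning applied to the pair $\varphi_1,\varphi_3$ gives $\varphi_{13}=\varphi_1+\varphi_3$ in $I_q^2F$.

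Next I would substitute these identities and collapse using the fact that every element of $I_q^1F$ is $2$-torsion in characteristic~$2$ (which is what drives \Pref{3.5}: $\MPf{\alpha}+\MPf{\alpha}=\MPf{0}$ is hyperbolic, and tensoring preserves this). A direct computation yields
\[
\Sigma_{\varphi_1,\varphi_2,\varphi_3} \;=\; 3\varphi_1 + 2\varphi_2 + 2\varphi_3 + \varphi_{23} + \varphi_{123} \;=\; \varphi_1 + \varphi_{23} + \varphi_{123}
\]
in $I_q^2F$.

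Finally I would unwrap the right-hand side. The pair $\{\varphi_1,\varphi_{23}\}$ is tight because the Pfister representative of $\varphi_1+\varphi_{23}$ modulo $I_q^3F$ exists, namely the norm form $\varphi_{123}$ of $Q_1\otimes Q_{23}\sim Q_{123}$ (this is available by tightness of the original triplet). Hence
\[
\Sigma_{\varphi_1,\varphi_{23}} \;=\; \varphi_1 + \varphi_{23} + \varphi_{123},
\]
matching the simplified left-hand side. There is no real obstacle here: the only subtlety is recording that the existence of the Pfister representatives being summed is guaranteed by tightness of $\{\varphi_1,\varphi_2,\varphi_3\}$, and that substituting Witt-module equalities (not merely mod-$I_q^3F$ equalities) is legitimate because left-linkage promotes the tightness of the two relevant subpairs to strong tightness.
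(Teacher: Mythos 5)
Your proposal is correct and follows essentially the same route as the paper's own proof: substitute $\varphi_{12}=\varphi_1+\varphi_2$ and $\varphi_{13}=\varphi_1+\varphi_3$ (justified, as you note, by the fact that a left-linked pair is strongly tight) into the expansion \eq{S3} and cancel by $2$-torsion to obtain $\varphi_1\perp\varphi_{23}\perp\varphi_{123}=\Sigma_{\varphi_1,\varphi_{23}}$. The extra care you take in justifying the Witt-module (rather than merely mod-$I_q^3F$) equalities is exactly what the paper leaves implicit.
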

\begin{proof}
By assumption $\varphi_1 + \varphi_2 = \varphi_{12}$ and $\varphi_1 + \varphi_3 = \varphi_{13}$, so by \Eq{S3} we have that $\Sigma_{\varphi_1,\varphi_2,\varphi_3} = \varphi_1  \perp \varphi_{23} \perp \varphi_{123} = \Sigma_{\varphi_1,\varphi_{23}}$.
\end{proof}

\begin{prop}\label{8.7}
Assume $\varphi_1,\varphi_2,\varphi_3$ is right-linked, and that the pairs $\varphi_1,\varphi_2$ and $\varphi_1,\varphi_3$ are left-linked. Then $\varphi_1,\varphi_{23}$ are left-linked as well.
\end{prop}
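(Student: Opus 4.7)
The plan is to exploit the invariant machinery established earlier: combine \Cref{1ll+?} with \Lref{lem} to get $\Sigma_{\varphi_1,\varphi_{23}}=0$, then invoke the pair-level criterion \Pref{2RL}.

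First I would observe that because $\varphi_1,\varphi_2,\varphi_3$ is right-linked, it is tight by \Cref{rl->t}, so the invariants $\Sigma_{\varphi_1,\varphi_2,\varphi_3}$ and the Pfister representatives $\varphi_{12},\varphi_{13},\varphi_{23},\varphi_{123}$ are all well defined. Since one of the pairs inside the triplet (in fact two of them) is left-linked, \Cref{1ll+?} applies and gives $\Sigma_{\varphi_1,\varphi_2,\varphi_3}=0$ in $I_q^2F$.

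Next I would apply \Lref{lem}: under the hypothesis that $\varphi_1,\varphi_2$ and $\varphi_1,\varphi_3$ are both left-linked, that lemma identifies $\Sigma_{\varphi_1,\varphi_{23}}$ with $\Sigma_{\varphi_1,\varphi_2,\varphi_3}$, hence $\Sigma_{\varphi_1,\varphi_{23}}=0$.

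Finally I would verify that the pair $\{\varphi_1,\varphi_{23}\}$ is itself tight, and then conclude by \Pref{2RL}. Tightness is immediate: the only nontrivial class in the group generated by this pair modulo $I_q^3F$ other than $\varphi_1$ and $\varphi_{23}$ themselves is $\varphi_1+\varphi_{23}\equiv \varphi_{123}\pmod{I_q^3F}$, and $\varphi_{123}$ is a Pfister form because $\{\varphi_1,\varphi_2,\varphi_3\}$ was assumed tight. With both tightness and vanishing of the invariant in hand, \Pref{2RL} yields that $\varphi_1$ and $\varphi_{23}$ are left-linked, as required.

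There is no real obstacle here: the work was already done in \Cref{1ll+?} (which bounds the anisotropic dimension of $\Sigma$ when a left-linked pair is present, forcing the Pfister class $\Sigma$ to be hyperbolic) and in \Lref{lem} (which telescopes the seven-term sum \eq{S3} into three terms using the Witt-level identities $\varphi_{1i}=\varphi_1+\varphi_i$ provided by left-linkage of $(\varphi_1,\varphi_i)$, $i=2,3$). The present proposition is essentially the two-form criterion \Pref{2RL} applied to the pair $\{\varphi_1,\varphi_{23}\}$ after these two facts are combined.
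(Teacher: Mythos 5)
Your proposal is correct and follows exactly the paper's own argument: the paper's proof of this proposition is the one-line chain $\Sigma_{\varphi_1,\varphi_{23}} = \Sigma_{\varphi_1,\varphi_2,\varphi_3} = 0$ via \Lref{lem} and \Cref{1ll+?}, followed by \Pref{2RL}. Your additional explicit check that the pair $\set{\varphi_1,\varphi_{23}}$ is tight is a detail the paper leaves implicit, but it is the same route.
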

\begin{proof}
By \Lref{lem} and \Cref{1ll+?}, $\Sigma_{\varphi_1,\varphi_{23}} = \Sigma_{\varphi_1,\varphi_2,\varphi_3} = 0$, so the claim follows from \Pref{2RL}.
\end{proof}

\forget 
\begin{rem}
Assume $\varphi_1,\varphi_2,\varphi_3$ is tight. Assume furthermore that $\varphi_1,\varphi_2$ are left-linked and that $\varphi_1,\varphi_3$ are left-linked as well. Now $\varphi_1 + \varphi_3 = \varphi_{13}$, so as before,
$$\Sigma = \varphi_1  \perp \varphi_{23} \perp \varphi_{123}.$$
By the argument same $\dim(\an{\Sigma}) \leq 3 \cdot 2 + 2 \leq 8$. If we assume further that $\varphi_1,\varphi_2,\varphi_3$ is right-linked, then $\Sigma$ is a $4$-fold Pfister form by \Pref{sep}, which proves that in this case $\Sigma = \Sigma_{\varphi_1,\varphi_2,\varphi_3} = 0$. But we also have that $\Sigma_{\varphi_1,\varphi_{23}} = \Sigma = 0$, which proves by \Pref{2RL} that $\varphi_1$ and $\varphi_{23}$ are left-linked.
\end{rem}
\forgotten

\subsection{A triplet with $\Sigma = 0$ and no common subfields}

Taking $n = 2$ in \Tref{main8} and phrasing the result in terms of quaternion algebras, we conclude:
\begin{cor}\label{main7Q}
There is a triplet of quaternion division algebras $Q_1,Q_2,Q_3$ over the field~$k(\alpha_0,\alpha_1,\alpha_2)$, with quaternion algebras $Q_{ij} \sim Q_i \tensor Q_j$ and $Q_{123} \sim Q_1 \tensor Q_2 \tensor Q_3$, such that every two algebras in $Q_1,Q_2,Q_3,Q_{12},Q_{13},Q_{23},Q_{123}$ have a common inseparable maximal subfield, but $Q_1,Q_2,Q_3$ have no common maximal subfield.
\end{cor}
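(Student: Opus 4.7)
The plan is to realize the corollary as a direct translation of \Tref{main8} with $n=2$ into the language of quaternion algebras, using the bijection (in characteristic~$2$) between $2$-fold quadratic Pfister forms and norm forms of quaternion algebras described at the start of \Sref{quaternion}. Apply \Tref{main8} over $E = k(\alpha_0,\alpha_1,\alpha_2)$: it produces a strongly tight set of three quadratic $2$-fold Pfister forms $\psi_0,\psi_1,\psi_2$ with no common bilinear or quadratic $1$-fold Pfister subform. For $i=1,2,3$, define $Q_i$ to be the (unique) quaternion algebra whose norm form is $\psi_{i-1}$; since these are $2$-fold Pfister forms over a rational function field, a standard transcendence/residue argument shows the $\psi_{i-1}$ are anisotropic, so the $Q_i$ are division algebras.

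Next I would extract the auxiliary algebras. Strong tightness means the subgroup generated in $I_q^2 E$ consists of $2$-fold Pfister forms, so for each nonempty $T \subseteq \{1,2,3\}$ the sum $\sum_{i\in T}\psi_{i-1}$ is the norm form of a unique quaternion algebra $Q_T$; define $Q_{ij} = Q_{\{i,j\}}$ and $Q_{123} = Q_{\{1,2,3\}}$. Comparison of Witt classes modulo $I_q^3 E$ (equivalently, comparison in $\Brp[2](E)$) gives $Q_{ij} \sim Q_i\tensor Q_j$ and $Q_{123}\sim Q_1\tensor Q_2\tensor Q_3$.

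For pairwise inseparable linkage among the seven algebras, I would invoke \Pref{cpll}: strong tightness of $\{\psi_0,\psi_1,\psi_2\}$ implies that every two Pfister forms representing elements of the generated group are left-linked. Translating through \Pref{6.3} (left-linkage of $2$-fold Pfister forms is the same as having a common inseparable quadratic subfield of the corresponding quaternion algebras), every two of $Q_1,Q_2,Q_3,Q_{12},Q_{13},Q_{23},Q_{123}$ share an inseparable maximal subfield.

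Finally, suppose $Q_1,Q_2,Q_3$ had a common maximal subfield $K$. In characteristic~$2$, $K/E$ is either separable or purely inseparable; in the first case $K$ corresponds to a common quadratic $1$-fold Pfister right divisor of $\psi_0,\psi_1,\psi_2$, and in the second to a common bilinear $1$-fold Pfister left divisor (cf.\ the identification \eq{standard} and the discussion at the start of \Sref{quaternion}). Both possibilities are ruled out by \Tref{main8}, giving the contradiction. The only substantive work is this last translation; everything else is a direct restatement, so there is no serious obstacle beyond bookkeeping.
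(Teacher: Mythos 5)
Your proposal is correct and is essentially the paper's own argument: the paper derives \Cref{main7Q} precisely by taking $n=2$ in \Tref{main8} and translating strong tightness, left-linkage, and the absence of a common $1$-fold Pfister divisor into the quaternion-algebra language via the dictionary at the start of \Sref{quaternion} (and \Pref{6.3}, \Pref{cpll}). Your write-up just makes the bookkeeping of that translation explicit, including the anisotropy of the $\psi_i$, which is already supplied by \Lref{monice} in the proof of \Tref{main8}.
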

Notice that in this case $\Sigma_{\varphi_1,\varphi_2,\varphi_3} = 0$, where $\varphi_i$ is the norm form of~$Q_i$.

\subsection{A triplet with nonzero invariant}

We give an example of a tight triplet of $2$-fold Pfister forms which cannot be right-linked because its invariant in $I^3E/I^4E$ is nonzero.

\begin{prop}\label{X}
Assume each of the pairs $\varphi_1,\varphi_2$ and $\varphi_1,\varphi_3$ is left-linked;
the pair $\varphi_2,\varphi_3$ is right-linked; but $\varphi_1$ is not right-linked to $\varphi_{23}$.

Then, letting $\Alb$ be the Albert form of $Q_1 \tensor Q_{23}$ (which is anisotropic by \Pref{Albert}) and taking $E = F(\Alb)$,
the set $S = \set{\varphi_1,\varphi_2,\varphi_3}$ is tight over $E = F(\Alb)$, but $\Sigma_S \not \in I_q^4E$.
\end{prop}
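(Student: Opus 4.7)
The plan splits into two tasks: verifying that $S$ is tight over $E$, and then showing that the invariant $\Sigma_S$ is represented over $E$ by an anisotropic $3$-fold Pfister form, so that it cannot lie in $I_q^4 E$.

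First, I would verify tightness of $S$ over $E$. The pairs $\set{\varphi_1,\varphi_2}$ and $\set{\varphi_1,\varphi_3}$ are left-linked over $F$, hence right-linked, and $\set{\varphi_2,\varphi_3}$ is right-linked by hypothesis; by \Pref{2LL} each of these pairs is tight, producing quaternion representatives $Q_{12},Q_{13},Q_{23}$ over $F$ that persist over $E$. To complete the tightness of $S$ over $E$, what is missing is a quaternion representative of the Brauer class of $Q_1\otimes Q_{23}$, and by construction $\Alb$ is isotropic over $E=F(\Alb)$; so \Pref{Albert} applied over $E$ yields that $\varphi_1$ and $\varphi_{23}$ are right-linked over $E$, producing $Q_{123}$ and completing the tightness of $S$ over $E$.

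Second, since left-linkage is preserved under field extensions, $\set{\varphi_1,\varphi_2}$ and $\set{\varphi_1,\varphi_3}$ remain left-linked over $E$. Combined with the tightness of $S$ over $E$ just established, \Lref{lem} gives
\[
\Sigma_S \;=\; \Sigma_{\varphi_1,\varphi_{23}} \qquad \text{in } I_q^2 E.
\]
Since the pair $\varphi_1,\varphi_{23}$ is right-linked over $E$, \Pref{sep} identifies $\Sigma_{\varphi_1,\varphi_{23}}$ with the Witt class of a quadratic $3$-fold Pfister form $\pi$ over $E$.

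The crux of the argument is to show $\pi$ is not hyperbolic over $E$. Suppose toward a contradiction that $\pi\sim 0$. Then \Tref{twoforms} forces $\varphi_1$ and $\varphi_{23}$ to be left-linked over $E=F(\Alb)$, and \Pref{Adam} would then conclude that $\varphi_1,\varphi_{23}$ are right-linked over $F$, contradicting the hypothesis. Hence $\pi$ is anisotropic of dimension $8$. If $\Sigma_S\in I_q^4 E$, then the nonzero anisotropic form $\pi$ representing $\Sigma_S$ would have dimension at least $2^4=16$ by the Arason--Pfister Hauptsatz, a contradiction. Therefore $\Sigma_S\notin I_q^4 E$. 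The hard part is this final contrapositive application of \Pref{Adam}: the failure of right-linkage of $(\varphi_1,\varphi_{23})$ over $F$ is precisely what prevents linkage from improving after passing to the generic splitting field $E$, and this is what keeps $\Sigma_S$ outside $I_q^4 E$.
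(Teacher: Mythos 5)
Your proposal is correct and follows essentially the same route as the paper's proof: reduce to the pair $\varphi_1,\varphi_{23}$ via \Lref{lem}, invoke \Pref{Adam} in contrapositive form to rule out left-linkage over $E$, and conclude by a dimension count on the $8$-dimensional Pfister representative of $\Sigma_S$ supplied by \Pref{sep}. You merely spell out a few steps the paper leaves implicit (the verification of tightness over $E$ and the appeal to the Arason--Pfister Hauptsatz), so no further comparison is needed.
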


\begin{proof}
We work over $E$, so the triplet is tight. Denote $\Sigma = \Sigma_S$. By \Lref{lem}, $\Sigma =  \Sigma_{\varphi_1,\varphi_{23}}$ so $\dim(\an{\Sigma}) \leq 8$.
By assumption $\varphi_1,\varphi_{23}$ are not right-linked over $F$, so by \Pref{Adam} $\varphi_1,\varphi_{23}$ are not left-linked over $E$, and therefore $\Sigma \not \in I_q^4E$.
\end{proof}

We conclude with an example which realizes the conditions of \Pref{X}. We need the following variant of \Lref{monice}:
\begin{lem}\label{monice2} 
Let $K$ be a field of characteristic~$2$ with a discrete valuation $\nu \co K \ra \Gamma$, and $\alpha,\beta,\gamma \in K$. Assume $\nu(\beta) = \nu(\gamma) < 0$ and the images of $\nu(\alpha), \nu(\beta)$ are linearly independent in $\Gamma/2\Gamma$.
Then $\alpha[1,\beta]\perp [1,\gamma]$ is anisotropic.
\end{lem}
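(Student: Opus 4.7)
The plan is to adapt the valuation-theoretic argument of \Lref{monice} to the sum of two scaled binary forms. First I would verify that each summand is anisotropic on its own. If $[1,\gamma]$ were isotropic we could find $t\in K$ with $t^2+t+\gamma=0$; the case split on the sign of $\nu(t)$ forces either $\nu(\gamma)\geq 0$ or $\nu(\gamma)=2\nu(t)\in 2\Gamma$, both contradicting $\nu(\gamma)=\nu(\beta)<0$ together with $\bar{\nu}(\gamma)=\bar{\nu}(\beta)\neq 0$ in $\Gamma/2\Gamma$ (nonzero by the linear independence hypothesis). The same argument handles $[1,\beta]$, hence also $\alpha[1,\beta]$.

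Next I would compute the possible residues modulo $2\Gamma$ of the nonzero values of each summand, following the $k=1$ case of \Lref{monice}. For $B=z^2+zw+\gamma w^2\neq 0$: if $w=0$ then $\bar{\nu}(B)=0$; if $z=0$ then $\bar{\nu}(B)=\bar{\nu}(\gamma)=\bar{\nu}(\beta)$; if both are nonzero, write $t=z/w$ so $B=w^2(t^2+t+\gamma)$ and compare $\nu(t^2)$, $\nu(t)$, $\nu(\gamma)$, using $\nu(\gamma)<0$ and $\bar{\nu}(\gamma)\neq 0$ to conclude that the minimum is attained once by either $\nu(t^2)\in 2\Gamma$ or $\nu(\gamma)$. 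In all cases $\bar{\nu}(B)\in\{0,\bar{\nu}(\beta)\}$. Scaling by $\alpha$, the same analysis applied to $A=\alpha(u_1^2+u_1u_2+\beta u_2^2)\neq 0$ gives $\bar{\nu}(A)\in\{\bar{\nu}(\alpha),\bar{\nu}(\alpha)+\bar{\nu}(\beta)\}$.

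Finally I would combine: suppose for contradiction that $(u_1,u_2,u_3,u_4)$ is an isotropic vector for $\alpha[1,\beta]\perp[1,\gamma]$, and set $A,B$ as above. In characteristic~$2$, the relation $A+B=0$ reads $A=B$. Individual anisotropy rules out the case where exactly one of $A,B$ vanishes, and if both vanish all four coordinates are zero. So $A=B\neq 0$, whence $\bar{\nu}(A)=\bar{\nu}(B)$ in $\Gamma/2\Gamma$. But every element of $\{\bar{\nu}(\alpha),\bar{\nu}(\alpha)+\bar{\nu}(\beta)\}$ being equal to an element of $\{0,\bar{\nu}(\beta)\}$ forces $\bar{\nu}(\alpha)\in\{0,\bar{\nu}(\beta)\}$, contradicting the $\F_2$-linear independence of $\bar{\nu}(\alpha),\bar{\nu}(\beta)$. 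The one genuinely delicate point is the residue computation in step two, but it is no more subtle than the $k=1$ base case in \Lref{monice}; the rest is a residue-disjointness bookkeeping.
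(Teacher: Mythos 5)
Your proposal is correct and follows essentially the same route as the paper's proof: the paper likewise computes that the nonzero values of $[1,\gamma]$ (and $[1,\beta]$) have valuations in $\{0,\nu(\beta)\}$ modulo $2\Gamma$, hence those of $\alpha[1,\beta]$ lie in $\{\nu(\alpha),\nu(\alpha)+\nu(\beta)\}$, and concludes by disjointness of these two sets. You merely spell out the residue computation (the $k=1$ case of the earlier lemma) and the final case analysis, which the paper leaves implicit.
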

\begin{proof}
Since $\nu(\beta) = \nu(\gamma)  \neq 0$ in $\Gamma/2\Gamma$, if $d$ is a nonzero value of $\MPf{\beta}$ or of $\MPf{\gamma}$, then $\nu(d) \in \set{0,\nu(\beta)}$, where we consider values modulo $2\Gamma$. Therefore, a nonzero value $d$ of $\alpha\MPf{\beta}$ has $\nu(d) \in \set{\nu(\alpha),\nu(\beta)+\nu(\alpha)}$, which is disjoint from the set of possible values for $\MPf{\gamma}$.
\end{proof}

\begin{exmpl}[Realization of \Pref{X}]
Let $k$ be a field of characteristic $2$. Let $F=k(\alpha,\beta,\gamma)$ be the function field in three algebraically independent variables over $k$.
Let $\phi_1=\MPf{\gamma,\alpha}$, $\phi_2=\MPf{\gamma, \beta}$ and $\phi_3=\MPf{\alpha\gamma, \beta}$.
Clearly $\phi_2$ is right-linked to $\phi_3$ and $\phi_1$ is left-linked to $\phi_2$. Since $\phi_1 \isom \MPf{\alpha \gamma,\alpha}$, $\phi_1$ is also left-linked to $\phi_3$.
Let $\phi_{23}$ denote the form $\MPf{\alpha,\beta}$ which is equivalent modulo $I_q^3F$ to $\phi_2 \perp \phi_3$.

We need to show that $\phi_1$ is not right-linked to $\phi_{23}$. By \Pref{Albert}, we need to show that the Albert form $\Alb = \gamma [1,\alpha] \perp \alpha [1,\beta] \perp [1,\alpha+\beta]$ is anisotropic.
Since $F=k(\alpha,\beta)(\gamma)$, $\Alb$ is anisotropic if and only if $[1,\alpha]$ and $\alpha [1,\beta] \perp [1,\alpha+\beta]$ are anisotropic over $k(\alpha,\beta)$. But $[1,\alpha]$ is clearly anisotropic, and $\alpha [1,\beta] \perp [1,\alpha+\beta]$ is anisotropic by \Lref{monice2} once we equip $K = k(\alpha,\beta)$ with the $(\alpha^{-1},\beta^{-1})$-valuation, with $\nu(\alpha) > \nu(\beta)$.
\end{exmpl}

\section*{Acknowledgements}
We thank Jean-Pierre Tignol, Adrian Wadsworth, and an anonymous referee for their comments on the manuscript.

\iffurther
\section{Further ideas}

\subsection{strongly tight set with no linkage -- higher dim}

((Done! See \Sref{higherintersect}))

{\tiny{This will be a great justification to the example of \Ssref{sec:genJ}. With the notation of this section,
\begin{ques}
Show that the intersection of the generic splitting fields $\bigcap F(\psi_i)$ is $F$, and conclude that $\psi_0,\dots,\psi_n$ are not linked.
\end{ques}

Adam: This might actually work. The issue is that I don't know of any valuation theory for Pfister forms, but perhaps we can invent such a thing. We can consider the extension of a Henselian valuation on the base field to the underlying vector space of a Pfister-form. The extension will be uniquely determined by the quadratic form - the value of an element $v$ will be one half the value of $f(v)$. Then we can talk about subforms which are lower case Pfister forms and the restriction of the valuation to that subform. If we can prove the fundamental inequality then we can probably prove that the set of forms in \Ssref{sec:genJ} does not have a common $1$-fold Pfister subform by valuation arguments.

Adam:
See [http://www.garibaldibros.com/linked-files/coalbare.pdf].
This paper might just be what we need in order to prove that our strongly tight set is not linked.
Please look at Part II, especially Proposition 7.13.
This proposition is the equivalent of the fundamental inequality we need here.}}

\subsection{Attempting to show that Left- does not imply right-linkage}\label{11.2}

We want an example for three quaternion algebras with a common inseparable field, that do not have a common separable field. This is a counterexample to ``left-linkage implies right-linkage'' for sets of size $s=3$.

The generic example for three $2$-fold Pfister forms which are left-linked is
$$\MPf{\alpha,\beta_1}, \quad  \MPf{\alpha,\beta_2}, \quad \MPf{\alpha,\beta_3};$$
these correspond to the algebras
$$\QA{\beta_i}{\alpha},$$
which we could consider over the valued field $k\db{\beta_1^{-1}}\db{\beta_2^{-1}}\db{\beta_3^{-1}}\db{\alpha^{-1}}$, with a valuation of rank $4$.
The same argument as above shows that for every common maximal subfield $L$, $\Gamma_L = \Z \times \Z \times \Z \times \frac{1}{2}\Z$, and therefore it has a generator satisfying $z^2+z \in F$ with a non-integral negative value.

Adam thinks this is ``beyond our reach at the moment'', as ``It involves wild ramification, something that even Tignol and Wadsworth avoided in their valuation theory book''.

\subsection{An explicit \Eref{twoforms:n=2}}\label{twoforms.explicit}

\Eref{twoforms:n=2} tells us that if $\MPf{\beta,\beta',\alpha} \sim 0$, then
$$\MPf{\beta,\alpha}, \MPf{\beta',\alpha}$$
are left-linked. Let us make this explicit. We may assume $\MPf{\beta,\alpha} \not \sim 0$. Thus $\beta'$ is a value of the form $\MPf{\beta,\alpha}$, so we can write $$\beta' = \theta + \theta'\beta$$ where $\theta,\theta'$ are values of the form $\MPf{\alpha}$. We will show that the common left-factor is $\Pf{1+\beta\theta'\theta^{-1}}$.

\begin{rem}
\begin{enumerate}
\item If $\theta$ is a value of $\MPf{\alpha}$ then $\MPf{\beta,\alpha} \isom \MPf{\beta\theta, \alpha}$.
\item $$\MPf{\beta,\alpha} \isom \MPf{1+\beta, \alpha + \alpha \beta^{-1}}$$ (which can be verified by noting the isomorphism of the Clifford algebras $\QA{\alpha}{\beta} \isom \QA{\alpha+\alpha\beta^{-1}}{1+\beta}$, obtained by replacing the standard pair of generators $x,y$ for $\QA{\alpha}{\beta}$ by $x+\beta^{-1}xy, y+1$).
    \end{enumerate}
\end{rem}
Now,
$$\MPf{\beta,\alpha} \isom \MPf{\beta\theta'\theta^{-1},\alpha} \isom \MPf{1+\beta\theta'\theta^{-1},\alpha+\alpha\beta^{-1}\theta'^{-1}\theta},$$
$$\MPf{\beta',\alpha} = \MPf{\theta+\theta'\beta,\alpha} \isom \MPf{1+\beta\theta'\theta^{-1},\alpha};$$
and the left-linkage is evident.

\subsection{Does Right- and pairwise-left imply left-linkage?}\label{11.3}

The generic example for three $2$-fold Pfister forms which are right-linked is
$$\MPf{\alpha_1,\beta}, \quad  \MPf{\alpha_2,\beta}, \quad \MPf{\alpha_3,\beta}.$$

As in \Eref{counter3.6}, this set is right-linked, therefore tight, with $\Sigma_S = \MPf{\alpha_1,\alpha_2,\alpha_3,\beta}$.

The generic example for three $2$-fold Pfister forms which are right-linked and pairwise left-linked is the above, over the generic splitting field of the three forms $\MPf{\alpha_i,\alpha_j,\beta}$ (this translates the assumption of being pairwise left-linked). Equivalently, the assumption is that every $\alpha_i$ is a value of the form $\MPf{\alpha_j,\beta}$.

The question: is the set left-linked? Equivalently, do the algebras $\QA{\beta}{\alpha_i}$ share a common inseparable maximal subfield, given that the Octonion algebras $[\beta,\alpha_i,\alpha_j)$ all split?

(An observation. The sum of two norms can be anything: $N(t+\beta) + N(t+\beta+ {\bf{b}}) = t$.)

Here we go. We assume $\QA{\beta}{\alpha_i}$ are distinct and nonsplit, and that $\MPf{\alpha_i,\alpha_j,\beta} \sim 0$. The latter condition implies that for $i \neq j$ there are norms $\theta,\theta'$ in the extension $k[{\bf{b}}]/k$ such that $\alpha_j = \theta + \alpha_i \theta'$, where ${\bf{b}}^2+{\bf{b}} = \beta$. More explicitly, for every $1\leq i<j \leq 3$ there are $\theta_{ij}, \theta_{ij}'$ such that $$\alpha_j = \theta_{ij} + \alpha_i \theta_{ij}';$$
so
$$\alpha_2 = \theta_{12} + \alpha_1 \theta_{12}';$$
$$\alpha_3 = \theta_{13} + \alpha_1 \theta_{13}';$$
$$\alpha_3 = \theta_{23} + \alpha_2 \theta_{23}'.$$
Since the algebras do not split, the $\theta_{ij}' \neq 0$. Since $\alpha_j$ only matter up to norms,
we may replace each $\alpha_i$ by $\lam_i \alpha_i$, and obtain the equalities
$$\lam_2\alpha_2 = \theta_{12} + \alpha_1 \lam_1\theta_{12}';$$
$$\lam_3\alpha_3 = \theta_{13} + \alpha_1 \lam_1\theta_{13}';$$
$$\lam_3\alpha_3 = \theta_{23} + \alpha_2 \lam_2\theta_{23}'.$$
Namely, we have the condition
$$\alpha_1  = \frac{\theta_{23} + \theta_{13} + \theta_{23}'\theta_{12}}{\lam_1(\theta_{13}' + \theta_{12}'\theta_{23}')}$$
plus the definitions
$$\alpha_2 = \frac{\theta_{12} + \alpha_1 \lam_1\theta_{12}'}{\lam_2};$$
$$\alpha_3 = \frac{\theta_{13} + \alpha_1 \lam_1\theta_{13}'}{\lam_3}.$$
We choose $\lam_1 = \theta_{23}\theta_{13}'^{-1}$, $\lam_2 = \theta_{23}\theta_{13}'^{-1}\theta_{12}'$ and $\lam_3 = \theta_{23}$,
and also replace
$\theta_{12} = \theta_{12}' \theta_{23}\theta_{13}'^{-1}\pi_2$, $\theta_{13} = \theta_{23} \pi_3$ and $\theta_{23}' = \theta_{12}'^{-1}\theta_{13}'\pi$, to get
the condition
$$\alpha_1  = \frac{1+ \pi_3 + \pi \pi_2}{1 + \pi}$$
plus the definitions
$$\alpha_2 = \pi_2 + \alpha_1;$$
$$\alpha_3 = \pi_3 + \alpha_1 .$$

This is a good point to change variables and summarize.

\begin{prop}
For every triplet of right-linked pairwise-left-linked algebras there are $\beta \in k$ and $\pi, \pi_2, \pi_3 \in N(k[{\bf{b}}]/k)$ such that the algebras are
$$\QA{\beta}{\frac{1+ \pi_3 + \pi \pi_2}{1 + \pi}};\qquad \QA{\beta}{\frac{1+ \pi_2 +  \pi_3 }{1 + \pi}}; \qquad \QA{\beta}{\frac{1 + \pi (\pi_2+\pi_3)}{1 + \pi}}.$$

As we show below, the three algebras have an inseparable maximal subfield (=left-linked) if and only if there are $a_2=0,1$, $a_3 \in F$ and $\mu_i \in N()$ such that
$$(1+ \pi_3 + \pi \pi_2) \mu_1 = a_2^2(1+\pi) + (1+ \pi_2 +  \pi_3 )\mu_2 = a_3^2(1+\pi) + (1 + \pi\pi_2+\pi\pi_3) \mu_3.$$
\end{prop}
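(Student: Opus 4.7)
The proposition splits into two independent parts: a canonical parametrization of every right-linked, pairwise-left-linked triplet of nonsplit quaternion algebras, and the explicit translation of the common-inseparable-subfield condition for this parametrization into the displayed norm equation. I would carry out the substitution chain sketched in the paragraphs preceding the proposition for the first part, and perform a direct computation of the pure part of a quaternion algebra in characteristic~$2$ for the second.

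For the first part, the starting point is that pairwise left-linkage of $\MPf{\alpha_i,\beta}$ and $\MPf{\alpha_j,\beta}$ (all right-linked via the common factor $\MPf{\beta}$) is equivalent, by \Tref{twoforms}, to the hyperbolicity of $\MPf{\alpha_i,\alpha_j,\beta}$. Unpacking this via common values yields $\theta_{ij},\theta_{ij}' \in N(k[{\bf{b}}]/k)$ (with ${\bf{b}}^2+{\bf{b}} = \beta$) satisfying $\alpha_j = \theta_{ij} + \alpha_i \theta_{ij}'$, where $\theta_{ij}' \neq 0$ because no $Q_i$ splits. I would then execute the displayed substitution chain preceding the proposition, scaling each $\alpha_i$ by a norm $\lambda_i$ (which preserves the isomorphism class of $\QA{\beta}{\alpha_i}$) and reparameterizing the $\theta$'s by norm-valued quantities $\pi, \pi_2, \pi_3$. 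With the specific choices $\lambda_1 = \theta_{23}\theta_{13}'^{-1}$, $\lambda_2 = \theta_{23}\theta_{13}'^{-1}\theta_{12}'$, $\lambda_3 = \theta_{23}$, the three pairwise equations reduce to $\alpha_1 = (1+\pi_3+\pi\pi_2)/(1+\pi)$, $\alpha_2 = \alpha_1 + \pi_2$, $\alpha_3 = \alpha_1 + \pi_3$, and simplifying $\alpha_2, \alpha_3$ over the common denominator yields the three fractions in the proposition.

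For the second part, I would compute the trace-zero subspace of $\QA{\beta}{\alpha}$ directly from the presentation $x^2+x=\beta$, $y^2=\alpha$, $yxy^{-1}=x+1$. The reduced trace satisfies $\operatorname{Trd}(a+bx+cy+dxy) = b$, so trace-zero elements are of the form $z = a + (c+dx)y$. Expanding $z^2$ using $y(c+dx) = (c+d+dx)y$ and $x^2 = x+\beta$ gives $z^2 = a^2 + \alpha(c^2+cd+d^2\beta) = a^2 + \alpha\, N(c+d{\bf{b}})$. Hence $F[\sqrt{\gamma}]$ embeds in $\QA{\beta}{\alpha}$ if and only if $\gamma = a^2 + \alpha\mu$ for some $a \in F$ and $\mu \in N(k[{\bf{b}}]/k)$. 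Applying this to the three values $\alpha = \alpha_i$ from the first part and requiring a common $\gamma$ yields the three equations $\gamma = a_i^2 + \alpha_i\mu_i$; the characteristic-$2$ identity $F[\sqrt{\gamma}] = F[\sqrt{\gamma+a_1^2}]$ lets us normalize $a_1 = 0$, and the scaling freedom $\gamma \to c^2\gamma$ (together with $a_i \to c a_i$, $\mu_i \to c^2\mu_i$) further normalizes $a_2 \in \{0,1\}$. Multiplying through by the denominator $1+\pi$ produces the displayed equation.

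The main obstacle is the bookkeeping in the first part: the three pairwise equations must be made simultaneously consistent by a single choice of scalings $(\lambda_1,\lambda_2,\lambda_3)$ and a single reparameterization $(\pi,\pi_2,\pi_3)$. One must verify both that the $\lambda_i$ are actual norms of $k[{\bf{b}}]/k$ (so that the algebra classes are preserved) and that combining the three relations introduces no hidden compatibility constraint on the original $\theta$'s. The displayed calculation indicates these checks succeed, but a careful line-by-line verification is required before one can assert the parametrization is canonical for the entire class of right-linked, pairwise-left-linked triplets.
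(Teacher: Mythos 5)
Your proposal is correct and takes essentially the same route as the paper's own derivation: the same unpacking of pairwise left-linkage via \Tref{twoforms} into relations $\alpha_j = \theta_{ij} + \alpha_i\theta_{ij}'$ with $\theta_{ij},\theta_{ij}' \in N(k[{\bf{b}}]/k)$ (nonzero since the algebras are nonsplit), the same norm rescalings $\lambda_1 = \theta_{23}\theta_{13}'^{-1}$, $\lambda_2 = \theta_{23}\theta_{13}'^{-1}\theta_{12}'$, $\lambda_3 = \theta_{23}$ with reparametrization by $\pi,\pi_2,\pi_3$, and the same normalizations $a_1 = 0$ and $a_2 \in \{0,1\}$ after multiplying through by $1+\pi$. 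Your only addition is the explicit reduced-trace computation showing that squares of inseparable elements of $\QA{\beta}{\alpha}$ are exactly $a^2 + \alpha\mu$ with $\mu \in N(k[{\bf{b}}]/k)$ — a fact the paper merely recalls — which is a correct and harmless elaboration.
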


Recall that the general square of an inseparable element in $[\beta,\alpha)$ has the form $a^2 + \alpha \mu$ where $\mu \in \Norm(k[{\bf{b}}]/k)$ and $a \in F$.

Thus, a common inseparable maximal subfield will be of the form $F[\sqrt{\cdot}]$ taking the square root of
$$a_1^2 + \frac{1+ \pi_3 + \pi \pi_2}{1 + \pi} \mu_1 = a_2^2 + \frac{1+ \pi_2 +  \pi_3 }{1 + \pi}\mu_2 = a_3^2 + \frac{1 + \pi (\pi_2+\pi_3)}{1 + \pi} \mu_3.$$

We may assume one of the $a_i = 0$, so we take $a_1 = 0$. Then multiply by $1+\pi$ to get:
$$(1+ \pi_3 + \pi \pi_2) \mu_1 = a_2^2(1+\pi) + (1+ \pi_2 +  \pi_3 )\mu_2 = a_3^2(1+\pi) + (1 + \pi\pi_2+\pi\pi_3) \mu_3.$$
Dividing up, we can also assume $a_2 = 0,1$.

\begin{cor}
Every right-linked pairwise-left-linked triplet (in $n = 2$) is left-linked iff for every
$\beta \in k$ and $\pi, \pi_2, \pi_3 \in N(k[{\bf{b}}]/k)$ there are $a_2=0,1$, $a_3 \in F$ and $\mu_i \in N()$ such that
$$(1+ \pi_3 + \pi \pi_2) \mu_1 = a_2^2(1+\pi) + (1+ \pi_2 +  \pi_3 )\mu_2 = a_3^2(1+\pi) + (1 + \pi\pi_2+\pi\pi_3) \mu_3.$$
\end{cor}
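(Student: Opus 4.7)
The plan is to derive the Corollary by combining the preceding Proposition (which parameterizes every right-linked pairwise-left-linked triplet) with the explicit description, recalled just before the Corollary, of the squares of inseparable elements in a quaternion algebra of the form $\QA{\beta}{\alpha}$.

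First, invoke the Proposition to assume, without loss of generality, that the triplet is
$$\QA{\beta}{\alpha_1},\ \QA{\beta}{\alpha_2},\ \QA{\beta}{\alpha_3} \quad \text{with} \quad \alpha_1=\tfrac{1+\pi_3+\pi\pi_2}{1+\pi},\ \alpha_2=\tfrac{1+\pi_2+\pi_3}{1+\pi},\ \alpha_3=\tfrac{1+\pi(\pi_2+\pi_3)}{1+\pi}$$
for some $\beta\in k$ and $\pi,\pi_2,\pi_3\in N(k[\mathbf{b}]/k)$, where $\mathbf{b}^2+\mathbf{b}=\beta$. The triplet is left-linked precisely when the three algebras share a common inseparable maximal subfield $F[z]$ with $z\notin F$ and $z^2\in F$. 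Using the recalled form for the square of an inseparable element, $z^2\in \QA{\beta}{\alpha_i}$ means $z^2 = a_i^2 + \alpha_i\mu_i$ for some $a_i\in F$ and $\mu_i\in N(k[\mathbf{b}]/k)$, and $z\notin F$ forces some $\mu_i\neq 0$. Thus the triplet is left-linked iff there exist such $a_i,\mu_i$ with
$$a_1^2+\alpha_1\mu_1=a_2^2+\alpha_2\mu_2=a_3^2+\alpha_3\mu_3,$$
the common value not a square.

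Next, reduce to the normal form in the Corollary by three elementary moves. First, replacing $z$ by $z+a_1$ (which preserves the inseparability and just shifts each $a_i$ by $a_1$) lets us assume $a_1=0$. Second, if $a_2\neq 0$, replacing $z$ by $z/a_2$ rescales $a_i^2\mapsto a_i^2/a_2^2$ and $\mu_i\mapsto \mu_i/a_2^2$; since $N(k[\mathbf{b}]/k)$ is closed under multiplication by squares of $F^{\times}$, the $\mu_i$ remain norms, and we may take $a_2\in\{0,1\}$. Third, multiplying the equality throughout by $1+\pi$ (which itself lies in $N(k[\mathbf{b}]/k)$, being the norm of $1+\mathbf{b}$ up to adjusting by the norm of~$\mathbf{b}$) absorbs the common denominator of the $\alpha_i$ into the $\mu_i$ without leaving the norm group. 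These moves are all reversible, so they produce the exact equation
$$(1+\pi_3+\pi\pi_2)\mu_1 = a_2^2(1+\pi)+(1+\pi_2+\pi_3)\mu_2 = a_3^2(1+\pi)+(1+\pi\pi_2+\pi\pi_3)\mu_3,$$
as required.

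The main obstacle is verifying that each normalization step genuinely preserves membership in $N(k[\mathbf{b}]/k)$ and that the degenerate possibility $\mu_1=\mu_2=\mu_3=0$ (which would give $z\in F$, hence no inseparable extension) is ruled out by the standing hypothesis that the three algebras are nonsplit and pairwise nonisomorphic. The multiplicativity of norms, together with $F^{\times 2}\subseteq N(k[\mathbf{b}]/k)$ and $1+\pi\in N(k[\mathbf{b}]/k)$, will resolve the first concern; the second is a direct check that if all $\mu_i$ vanish then $z^2=a_1^2=a_2^2=a_3^2$ forces $z\in F$, contrary to the choice of $z$.
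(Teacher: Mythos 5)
Your argument is the paper's own derivation: invoke the preceding Proposition to put the triplet in the form $\QA{\beta}{\alpha_i}$ with $\alpha_i = N_i/(1+\pi)$, use the recalled fact that squares of inseparable elements of $\QA{\beta}{\alpha}$ are $a^2+\alpha\mu$ with $\mu \in N(k[\mathbf{b}]/k)$, then normalize $a_1=0$, clear the denominator by $1+\pi$, and rescale to get $a_2\in\{0,1\}$ — exactly the steps the paper sketches, so the proposal is essentially correct. One justification you give is false, though harmlessly so: $1+\pi$ does \emph{not} in general lie in $N(k[\mathbf{b}]/k)$ — note $N(1+\mathbf{b})=(1+\mathbf{b})\mathbf{b}=\mathbf{b}+\mathbf{b}^2=\beta$, and the norm group, being merely multiplicative, is not closed under $x\mapsto 1+x$. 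Fortunately the step never needs this: multiplying the chain of equalities by the nonzero scalar $1+\pi$ sends $a_i^2+\alpha_i\mu_i$ to $a_i^2(1+\pi)+N_i\mu_i$ with the \emph{same} $\mu_i$, so nothing has to be ``absorbed into the norm group,'' and reversibility is just division by $1+\pi$. Also, the degenerate case $\mu_1=\mu_2=\mu_3=0$ that you spend a paragraph excluding cannot occur: $N(k[\mathbf{b}]/k)$ is the image of $k[\mathbf{b}]^{\times}$ under the norm and so consists of nonzero elements. The genuine nontriviality point, which you do identify correctly, is that the common value must avoid $F^2$; since $F^{\times 2}\subseteq N(k[\mathbf{b}]/k)$, having $\alpha_1\mu_1\in F^2$ would make $\alpha_1$ a norm and split $\QA{\beta}{\alpha_1}$, so for nonsplit algebras this is automatic, as in the paper's (implicit) treatment.
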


\subsection{Conditions for triviality of $((\cdot,\cdot))$}

Recall Jacobson's notation for $2$-fold forms.
\begin{rem}
It is convenient to have a criterion for $((a,b)) = 0$. Since the form is zero (in $I_q^2$) if and only if it is isotropic, it is easy to verify that $((a,b)) = 0$ if any only if $a = 0$ or $b \in F^2+ a^{-1}\wp(F)$, where $\wp(F) = \set{\wp(u) = u^2+u \suchthat u \in F}$.  In particular $((a,t^2)) = 0$ and $((a,at^4))=0$ for all $t \in F$ (since $at^4 = a^{-1}\wp(at^2)+t^2$).
\end{rem}

\subsection{Other questions on maximal subfields of quaternions}

Do $\QA{\alpha}{\beta}$ and $\QA{1}{\alpha+\beta}$ share a maximal subfield?

What about $\QA{1}{\alpha}$ and $\QA{\alpha}{\beta}$?

\subsection{The case $n = 2$ from \Sref{stnolink}}

{}From now on in this section, $\mychar F = 2$.
\begin{prop}\label{div}
Let $F$ be a field of characteristic~$2$ equipped with a rank two Henselian valuation, $\nu \co F \rightarrow \Z \times \Z$. If $\nu(a) = (-1,0)$ and $\nu(b) = (0,-1)$ then $[ab,b)$ is a division algebra.
\end{prop}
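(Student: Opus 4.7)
The plan is to identify $[ab,b)$ with the quaternion algebra $\QA{ab}{b}$ of \eq{standard}, whose reduced norm form is the quadratic $2$-fold Pfister form $\varphi=\MPf{b,ab}$. Since a quaternion algebra is a division algebra if and only if its norm form is anisotropic (equivalently, not hyperbolic, for Pfister forms), the proposition reduces to showing that $\varphi$ is anisotropic over $F$.

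To prove anisotropy, I would apply \Lref{monice} directly, with $k=2$, $\beta_1=ab$, and $\beta_2=b$, so that $\MPf{\beta_2,\beta_1}=\MPf{b,ab}=\varphi$. Fix any total ordering of $\Gamma=\Z\times\Z$ extending the coordinatewise order (so that $(-1,-1)<0$). Then $\nu(\beta_1)=\nu(ab)=(-1,-1)<0$, satisfying the first hypothesis of the lemma. For the second, the images of $\nu(\beta_1)=(-1,-1)$ and $\nu(\beta_2)=(0,-1)$ in $\Gamma/2\Gamma=(\F_2)^2$ are $(1,1)$ and $(0,1)$, which form an $\F_2$-basis and are in particular linearly independent. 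Part~(2) of \Lref{monice} now yields that $\varphi$ is anisotropic, whence $[ab,b)$ is a division algebra.

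There is essentially no obstacle here beyond matching notation: \Lref{monice} is tailor-made for Pfister forms whose slots have ``generic'' valuations, and the hypotheses $\nu(a)=(-1,0)$, $\nu(b)=(0,-1)$ ensure by an elementary linear-algebra observation (replacing $\{\nu(a),\nu(b)\}$ by $\{\nu(ab),\nu(b)\}$) that the relevant independence condition holds. In particular, the Henselian hypothesis is not actually used in this argument: any valuation on $F$ with the stated values suffices to apply \Lref{monice}. The Henselian hypothesis would only be essential if one instead wanted to extend the valuation to the quaternion algebra itself and conclude division-ness from valuation-theoretic properties of the extension, which is a viable but more elaborate route.
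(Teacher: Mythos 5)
Your proof is correct, but it takes a genuinely different route from the paper's. The paper argues directly on the algebra: it extends the Henselian valuation to the separable quadratic subfield $L=F[x]$ with $x^2+x=ab$, computes $\nu(x)=(-\tfrac12,-\tfrac12)$, and shows that $b$ cannot be a norm from $L/F$ because otherwise $\Gamma_L$ would contain $\tfrac12\Z\times\tfrac12\Z$, violating the fundamental inequality $\dimcol{\Gamma_L}{\Gamma_F}\leq\dimcol{L}{F}$; Wedderburn's criterion then gives division-ness. You instead pass to the norm form $\MPf{b,ab}$ and invoke \Lref{monice} with $\beta_1=ab$, $\beta_2=b$, whose hypotheses you verify correctly ($\nu(ab)=(-1,-1)$ and $\nu(b)=(0,-1)$ reduce to the independent vectors $(1,1)$ and $(0,1)$ in $\Gamma/2\Gamma$). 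Your route is arguably more economical given that the paper already proves \Lref{monice} for its main theorem, and your observation that Henselianity is then superfluous is accurate --- the paper's argument needs it precisely because it extends the valuation to $L$. One small caveat: you write ``fix any total ordering of $\Gamma$,'' but the ordering is part of the data of the given rank-two valuation, not a free choice; the hypothesis $\nu(\beta_1)=(-1,-1)<0$ that \Lref{monice} genuinely uses does hold under the (right-to-left) lexicographic ordering the paper intends for such valuations, but strictly speaking the proposition as stated leaves the ordering implicit, so you should say that you are using that intended ordering rather than choosing one.
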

\begin{proof}
%
Take generators $x,y$ following the presentation in \eq{standard}. Then $x^2 +  x = ab$, but $\lam^2+\lam + ab$ cannot decompose over $F$, so $L= F[x]$ is a separable field extension of $F$, and the valuation extends to $F[x]$ with $\nu(x) = (-\frac{1}{2},-\frac{1}{2})$. Now, if $b$ was a norm in the extension $F[x]/F$, there would be an element whose value is $(0,-\frac{1}{2})$ forcing $\nu(L)$ to contain $\frac{1}{2}\Z \times \frac{1}{2}\Z$, contrary to the basic inequality
$\dimcol{\Gamma_L}{\Gamma_F} \leq \dimcol{L}{F}$ (\cite[Chapter 1, Proposition 1.3]{TignolWadsworth:2015}). By Wedderburn's criterion, it follows that $[ab,b)$ is a division algebra.
\end{proof}

We pass to Jacobsons' symmetric notation
\begin{equation}\label{Jac1}
(a,b) = F \ideal{x,y \subjectto x^2=\alpha,\, y^2=\beta,\, y x - xy = 1},
\end{equation}
where both $x$ and $y$ are inseparable, which is related to the more standard notation by the relation $(\alpha,\beta) = \QA{\alpha\beta}{\beta}$. Jacobson's symbols are well known to be bi-additive and alternating. Notice that a common slot for two algebras in Jacobson's notation corresponds to a common inseparable maximal subfield, namely a left-linkage of the norm forms.

Let $k$ be a field of characteristic 2 and let $F=k\db{\alpha^{-1}}\db{\beta^{-1}}\db{\gamma^{-1}}$ be the field of iterated Laurent series (see e.g. defined in \cite[Section 1.1.3]{TignolWadsworth:2015}). This field is equipped with the Henselian $(\alpha^{-1},\beta^{-1},\gamma^{-1})$-adic valuation with value group $\Gamma_F=\mathbb{Z} \times \mathbb{Z} \times \mathbb{Z}$ with right-to-left lexicographic ordering.
We consider the quaternion $F$-algebras $A=(\beta,\gamma)$, $B = (\alpha,\beta)$ and $C = (\beta,\gamma)$.

Since the $(\alpha,\beta,\gamma)$-adic valuation on $F$ is Henselian and $A,B$ and $C$ are division algebras by \Pref{div}, this valuation extends uniquely to each one of those algebras (\cite[Corollary 1.7]{TignolWadsworth:2015}).

\begin{prop}
The algebras $A$, $B$ and $C$ do not share a common quadratic field extension of $F$.
\end{prop}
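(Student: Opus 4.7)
The strategy is to extend the Henselian rank-three valuation $\nu$ uniquely to each of the three division algebras and compare the resulting value groups. (We interpret the statement with $C = (\alpha,\gamma)$, since otherwise $A$ and $C$ trivially share subfields.) Each of $A$, $B$, $C$ is a division algebra by the evident three-variable version of \Pref{div}, because $\bar\nu(\alpha), \bar\nu(\beta), \bar\nu(\gamma)$ are linearly independent in $\Gamma_F/2\Gamma_F \isom (\Z/2)^3$. Jacobson's relations $x^2 = a, y^2 = b$ give $\nu(x) = \tfrac12\nu(a)$ and $\nu(y) = \tfrac12\nu(b)$, so
$$\Gamma_A = \Z \times \tfrac{1}{2}\Z \times \tfrac{1}{2}\Z, \quad \Gamma_B = \tfrac{1}{2}\Z \times \tfrac{1}{2}\Z \times \Z, \quad \Gamma_C = \tfrac{1}{2}\Z \times \Z \times \tfrac{1}{2}\Z.$$
Each $[\Gamma_X : \Gamma_F] = 4 = [X:F]$, so each algebra is totally ramified with residue field $k$, and $\Gamma_A \cap \Gamma_B \cap \Gamma_C = \Z^3 = \Gamma_F$.

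If $L$ is a common quadratic subfield, then $\Gamma_L \sub \Gamma_A \cap \Gamma_B \cap \Gamma_C = \Gamma_F$ forces $\Gamma_L = \Gamma_F$, and $\bar L \sub \bar A = k = \bar F$ forces $\bar L = \bar F$. Thus $L/F$ would have to be a defectful quadratic extension sitting inside $A$.

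The main step is to rule this out. Any $z \in A \setminus F$ generating a quadratic subfield $L = F[z]$ is, after subtracting an $F$-scalar and (in the separable case) dividing by its reduced trace, of the form $z = \lambda x + \mu y$ (trace zero, inseparable) or $z = ax + by + xy$ (trace one, separable), with $\lambda, \mu, a, b \in F$. The basis elements $x, y, xy$ have valuations $(0,-\tfrac12,0)$, $(0,0,-\tfrac12)$, $(0,-\tfrac12,-\tfrac12)$, representing the three distinct non-trivial cosets of $\Gamma_F$ in $\Gamma_A$ (whose quotient is $(\Z/2)^2$). Hence the valuations of the non-zero summands of $z$ lie in distinct non-trivial cosets of $\Gamma_F$, are pairwise unequal, and the minimum is attained uniquely; so $\nu(z)$ equals that minimum and is not in $\Gamma_F$. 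Therefore $\Gamma_L = \Gamma_F + \Z\nu(z) \supsetneq \Gamma_F$, contradicting $\Gamma_L = \Gamma_F$. Symmetric arguments apply to $B$ and $C$. The hard part of the proof is precisely this defect step---defectful quadratic extensions do exist over $F$ in characteristic~$2$, e.g.~$F[\sqrt{1+\alpha^{-1}\beta^{-1}}]$---but it is dispatched by the coset structure of the Jacobson basis of $A$.
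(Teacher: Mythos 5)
Your proof is correct and follows the same global strategy as the paper's: extend the Henselian valuation uniquely to each of the three division algebras, compute $\Gamma_A\cap\Gamma_B\cap\Gamma_C=\Gamma_F$, and conclude that a common quadratic subfield $L$ would have to satisfy $\Gamma_L=\Gamma_F$, which is then shown to be impossible. Where the two arguments part ways is in how they rule out $\Gamma_L=\Gamma_F$. The paper observes that $[A:F]=[\overline{A}:\overline{F}]\cdot[\Gamma_A:\Gamma_F]$, i.e.\ that the valuation on $A$ is defectless, and invokes the (nontrivial, citable) fact that defectlessness is inherited by subfields of a valued division algebra over a Henselian field, which forces $[\Gamma_L:\Gamma_F]=2$. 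You instead prove the same conclusion by hand: normalizing a generator $z$ of $L$ to $\lambda x+\mu y$ or $ax+by+xy$ via the reduced trace, you note that the nonzero summands have values lying in pairwise distinct nontrivial cosets of $\Gamma_F$ in $\Gamma_A$, so the minimum of their values is attained exactly once, $\nu(z)\notin\Gamma_F$, and $L/F$ is ramified. This is more elementary and self-contained---it uses only that $\nu(u+v)=\min\{\nu(u),\nu(v)\}$ when $\nu(u)\neq\nu(v)$---at the cost of a small case analysis, whereas the paper's route is shorter but leans on the structure theory of valued division algebras. Two minor remarks. First, you correctly repaired the paper's typo by reading $C=(\alpha,\gamma)$. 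Second, your parenthetical claim that $F$ admits defectful quadratic extensions, with the example $F[\sqrt{1+\alpha^{-1}\beta^{-1}}]$, is false: in characteristic $2$ one has $\sqrt{1+\alpha^{-1}\beta^{-1}}=1+\sqrt{\alpha^{-1}\beta^{-1}}$, so that extension is totally ramified, and in fact $F$, being a field of iterated Laurent series, is maximally complete and hence admits no defect at all. This does not affect your argument, which never relies on the claim.
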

\begin{proof}
We have the fundamental inequality
$$[A:F] \geq [\overline{A}:\overline{F}] \cdot [\Gamma_A:\Gamma_F]$$
where $\Gamma_A$ is the value group of $A$, and $\overline{A}$ and $\overline{F}$ are the residue division algebras of $A$ and $F$ respectively.

Since $[A:F]=4$ and $(-\frac{1}{2},0,0),(0,-\frac{1}{2},0) \in \Gamma_A$, we have $[\Gamma_A:\Gamma_F]=4$, $[\overline{A}:\overline{F}]=1$ and $[A:F]=[\overline{A}:\overline{F}] \cdot [\Gamma_A:\Gamma_F].$
This means the valuation on $A$ is defectless.
This property is inherited by any quadratic field extension $L$ of $F$ inside $A$, and since $\overline{L} \subseteq \overline{A}$, we have $[\Gamma_L:\Gamma_F]=2$.
The same thing holds for $B$ and $C$ as well.

Assume the algebras have a common maximal subfield $L$.
Then its value group should be a subgroup of the value groups of $A$, $B$ and $C$.
The intersection of those value groups is $\mathbb{Z} \times \mathbb{Z} \times \mathbb{Z}$, contradictory to $[\Gamma_L : \Gamma_F]=2$.
\end{proof}

\begin{prop}
The norm forms of $A,B,C$ compose a strongly tight set.
\end{prop}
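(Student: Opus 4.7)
The plan is to verify strong tightness directly via Jacobson's additive notation from Section~\ref{sec:7}. Writing the third algebra as $C = (\alpha,\gamma)$, the norm forms of $A$, $B$, $C$ are the $2$-fold Pfister forms $\varphi_A = ((\beta,\gamma))$, $\varphi_B = ((\alpha,\beta))$, $\varphi_C = ((\alpha,\gamma))$ in $I_q^2F$. The task is to show that every element of the subgroup they generate in $I_q^2F$ is represented by a single $2$-fold Pfister form.

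First I would handle the three two-element sums. By the bi-additivity and symmetry of $((\cdot,\cdot))$ established in \Pref{rels}, and the fact that each pair of generators shares a slot,
\[
\varphi_A + \varphi_B = ((\alpha+\gamma,\beta)), \quad \varphi_A + \varphi_C = ((\alpha+\beta,\gamma)), \quad \varphi_B + \varphi_C = ((\alpha,\beta+\gamma)),
\]
each a single Jacobson symbol, hence a $2$-fold Pfister form.

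The main step is the triple sum, where no slot is visibly shared among the three generators. The identity to target is
\[
((\alpha+\beta,\beta+\gamma)) = ((\alpha,\beta)) + ((\alpha,\gamma)) + ((\beta,\beta)) + ((\beta,\gamma)) = \varphi_B + \varphi_C + \varphi_A,
\]
obtained by expanding bi-additively in both slots and using the alternating relation $((\beta,\beta)) = 0$. Therefore $\varphi_A+\varphi_B+\varphi_C = ((\alpha+\beta,\beta+\gamma))$ is again a $2$-fold Pfister form.

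With all seven non-empty subset sums expressed as Jacobson symbols, the set $\{\varphi_A,\varphi_B,\varphi_C\}$ is strongly tight. The main obstacle is spotting the compact Pfister representative of the triple sum; once the identity above is in hand, no invariant computation or valuation-theoretic argument is required for this proposition, and everything follows from the formal calculus of Jacobson symbols developed in \Pref{rels}.
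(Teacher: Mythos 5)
Your proof is correct and follows essentially the same route as the paper: both express every nonempty subset sum of the three norm forms as a single Jacobson symbol via the bi-additive, symmetric, alternating calculus of \Pref{rels}, with the only (immaterial) difference being that your triple-sum representative $((\alpha+\beta,\beta+\gamma))$ and the paper's $((\alpha+\beta,\alpha+\gamma))$ differ by the vanishing term $((\alpha+\beta,\alpha+\beta))$. (Minor note: the additive symbol notation lives in \Sref{stnolink}, not \Sref{sec:7}.)
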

\begin{proof}
Left-linkage can be verified in the level of the algebras. The subgroup generated by $A,B,C$ are
$$(a,b),\, (a,c),\, (b,c),\, (a,b+c),\, (b,a+c),\, (c,a+b),\, (a+b,a+c).$$
To see that every two algebras have a common maximal subfield, notice that $(a,b) = (a+b,b)$ and $(a,b+c) = (a,a+b+c)$, and apply symmetry.
\end{proof}
This completes the proof of \Tref{main7}.


...
(After presenting Jacobson's notation for $n =2$)

Furthermore, recall Jacobson's notation for quaternion algebras in \eq{Jac1}:
\begin{rem}
The norm form of the algebra $(\alpha,\beta)$ is $((\alpha,\beta))$. Indeed $(\alpha,\beta) = \QA{\alpha\beta}{\beta}$ whose norm form is $\MPf{\beta,\alpha\beta} = ((\alpha,\beta))$.
\end{rem}

Let $n_A, n_B, n_C$ be the norm forms of $A,B,C$, respectively, which are the unique $2$-fold Pfister forms in $I_q^2$ representing $A,B,C \in \Brp[2](F) = I_q^2/I_q^3$. Thus $n_A = ((\beta,\gamma))$, $n_B = ((\alpha,\gamma))$ and $n_C = ((\alpha,\beta))$. Using the relations in \Pref{rels}, we see that the subgroup of $I_q^2$ generated by $n_A, n_B, n_C$ contains the Pfister forms
$$((\alpha+\beta,\gamma)),\, ((\alpha+\gamma,\beta)),\, ((\beta+\gamma,\alpha))$$
and
$$((\alpha,\beta))+((\beta,\gamma))+((\gamma,\alpha)) = ((\alpha+\beta,\alpha+\gamma)),$$
showing that $S$ is strongly tight, so $\Sigma_S = 0$. This completes the proof of \Tref{main7}.


\subsection{Basic properties}

Recall that $P_n$ and $GP_n$ denote the sets of Pfister forms, and Pfister forms up to scalar multiples (up to Witt equivalence of quadratic forms).

Adam's comment: We should be careful here. In characteristic 2 there is a significant difference between quadratic and bilinear forms.
If you want, we can use $P_n$ for both.

Here are some potential properties of pairs of $n$-fold Pfister forms. If you know more connections between them, this will help us improve the exposition.
\begin{enumerate}
\item\label{P1} They are left-linked (all but one slots).
\item\label{P2} They are right-linked (all but one slots).
\item\label{P3} They are ``close'' = $q_1 - q_2 \in GP_{n}$.
\item\label{P4} They are ``very close'' = $q_1 - q_2 \in P_n$.
\item\label{P5} They are EKM-linked: $q_1 - q_2$ contains $2^{n-1}\HQ$.
\item\label{P6+} There is a common subform of $q_1,q_2$ which is in $P_{n-1}$
\item\label{P6} There is a common subform of $q_1,q_2$ which is in $GP_{n-1}$
\item\label{P2+} $q_1,q_2$ have a common right-factor from $P_n$.
\item\label{P7} $q_1,q_2$ have a common right-factor from $GP_n$.
\item\label{P8} $q_1 - q_2$ is equivalent to an element of $P_n$ modulo $I^{n+1}_q$.
\item\label{P8-} $q_1 - q_2$ is equivalent to an element of $GP_n$ modulo $I^{n+1}_q$.
\end{enumerate}

What is rather obvious:
$\eq{P1}\Ra \eq{P4} \Ra \eq{P2} \Ra \eq{P3}$, $\eq{P2} \Ra \eq{P8}$, $\eq{P5} = \eq{P6} = \eq{P7}$, $\eq{P2} = \eq{P2+} \Ra \eq{P7}$, $\eq{P8} \Ra \eq{P8-}$, $\eq{P6+} \Ra \eq{P6}$.

(Here $\eq{P4} \Ra \eq{P2}$ is Prop 24.5 in the book, $\eq{P5} = \eq{P6} = \eq{P7}$ is essentially shown in p.~102).

Adam's comments:
I'm referring here only to the case of $\operatorname{char}(F)=2$.
In $\eq{P1}$ and $\eq{P2}$ you must specify the number of slots you want two forms to share, from the left or from the right.
Let us focus at the moment on $n-1$ slots, because this is what we do in the paper.
Theorem \ref{twoforms} implies that $\eq{P4}=\eq{P1}$.
The equivalence $\eq{P5}=\eq{P2}$ is an immediate result of \cite[Theorem 2.3.1]{Faivre:thesis}.
Since every form is equivalent to its scalar multiple modulo $I_q^{n+1} F$, we have $\eq{P8}=\eq{P8-}$.
I think $\eq{P3},\eq{P6},\eq{P7}$ are of no importance in this paper.

\subsection{Higher dimensional linkage properties}

Properties of a set $S$ of $s$ Pfister forms. A set of $(n+1)$-Pfister forms is {\bf{tight}} if every element of the group it generates in $I_q^{n+1}/I_q^{n+2}$ is represented by a Pfister form (which is unique, by \Rref{unique}).

Again, any comments will help motivate the paper. What inverse implications are our main questions?
$$\xymatrix@C=5pt@R=20pt{
{} & {} & \LAY{right-linked}{left-linked} \ar@{->}[dr] \ar@{->}[lldd] & {} & {} \\
{} & {} & {} & \LAY{right-linked}{pw left} \ar@{.>}[llld]|(0.3){??}|(0.4){ss.~\ref{11.3}} \ar@{->}[d] \ar@{<->}[rd]|(0.5){Thm.~\ref{maincor++}} & {}\\
 \mbox{left-linked} \ar@{->}[ddrr] \ar@{.>}[rrrd]|(0.2){??}|(0.3){ss.~\ref{11.2}} & {}  & {} & \LAY{right-linked}{$\Sigma=0$} \ar@{->}[d] \ar@{->}[lddd] \ar@/_8ex/@<-0ex>@{:>}[llldddd]|(0.4){NOT}|(0.45){Ex~\ref{counter3.6}} & \LAY{right-linked}{strongly tight} \ar@{->}[ld]  \ar@/^5ex/@{->}[lldd]\\
 {} & {} & {} & \mbox{right-linked} \ar@{->}[lddd]  & {}\\
{} & {} & \mbox{strongly tight}  \ar@/^6ex/@{:>}[luu]|(0.45){NOT}^(0.7){Ex~\ref{main8}} \ar@{->}[d] \ar@{->}[ddll]|(0.6){Prop.~\ref{2RL}} & {} & {}\\
{} & {} & \LAY{tight}{$\Sigma=0$} \ar@{->}[d]  & {}  & {}\\
\LAY{tight}{pw left} \ar@{->}[d] \ar@{->}[rrd] & {} & \LAY{tight}{$\Sigma\in I^{n+s-1}$} \ar@{->}[d]  & {} & {}& {}\\
\mbox{pw left} \ar@{->}[rrd] &{} &  \LAY[!]{tight}{($\rightarrow$ comp pw right)} \ar@{->}[d] & {} & {}\\
 {} & {} &  \mbox{pw right} &  {} & {}\\
{} & {}& {}& {} & {}
}$$

(pw = pairwise)
(Build the same diagram, when there are $s = 2$ forms; and for $s = 3$ quaternion algebras ($n = 2$)).

\section{Higher dimensional linkage properties}

Adam's comments on this section:
groups of Pfister forms are not interesting!!!
We are interested in sets of $n$-fold Pfister forms satisfying the following:
the sum of every two forms in the set is equivalent to another form in the set modulo $I_q^{n+1} F$.
Given this fact, much of what's written in these diagrams is irrelevant.
For example, every set of $(n-1)$-right linked Pfister forms satisfies this property!
These diagrams are also quite terrifying. They give the impression that we're dealing with very complicated things, which is not true. Everything we do in this paper is neat.

Properties of a set $S$ of $s$ Pfister forms. ``gen" means the set generates a subgroup whose elements are represented by Pfister forms.
Again, any comments will help motivate the paper. What inverse implications are our main questions?

\subsection{Properties of partial linkage}

\begin{cor}(\cite[Corollary 2.5.6]{Faivre:thesis})
Let $\phi$ and $\psi$ be two Pfister forms.
Then one of the following holds:
\begin{enumerate}
\item $i_W(\phi' \perp \psi')=0$ and then $\phi$ and $\psi$ are not $1$-left-linked, but they may still be $1$-right-linked.
\item $i_W(\phi' \perp \psi')=2^k-1$ for some $k \geq 1$ and then $\phi$ and $\psi$ are $k$-left-linked but are not $(k+1)$-right-linked.
\item $i_W(\phi' \perp \psi')=2^k-2$ for some $k \geq 2$ and then $\phi$ and $\psi$ are $k$-right-linked but are not $k$-left-linked.
\end{enumerate}
\end{cor}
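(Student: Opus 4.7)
The strategy is to extend Faivre's criterion (Theorem 5.2) to a family of criteria governing $k$-left- and $k$-right-linkage simultaneously, and then observe that the three disjoint intervals in which $i_W(\phi' \perp \psi')$ is permitted to lie are arranged along a step function determined by these criteria.

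Step 1 (Generalized left-linkage criterion). First I would establish, for each $k \geq 1$, the equivalence: $\phi$ and $\psi$ are $k$-left-linked iff $i_W(\phi' \perp \psi') \geq 2^k - 1$. Theorem 5.2 covers $k = n-1$; the general case follows the same recipe. If $\phi = b \tensor \chi_1$ and $\psi = b \tensor \chi_2$ for some bilinear $k$-fold Pfister $b$, then using the decomposition $\varphi' = b' \tensor (\ldots) \perp b \tensor (\ldots) \perp \Qf{1}$ as in Section~\ref{LeftRight}, the sum $\phi' \perp \psi'$ reveals $(2^k - 1)$ hyperbolic planes coming from the doubled $b' \tensor \tau$ piece (where $\tau = \MPf{\alpha_{n-1}}$ as in the proof of \Tref{twoforms}), plus the $\Qf{1} \perp \Qf{1} \isom \Qf{1,0}$ identification. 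Conversely, by the rigidity of Pfister forms (and neighbor arguments), a Witt index of that size forces the existence of such a bilinear $k$-fold Pfister common left factor.

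Step 2 (Generalized right-linkage criterion). Similarly I would establish: $\phi$ and $\psi$ are $k$-right-linked iff $i_W(\phi' \perp \psi') \geq 2^k - 2$. Writing $\phi = b_1 \tensor \tau$, $\psi = b_2 \tensor \tau$ with $\tau$ a quadratic $k$-fold Pfister right factor, the shared subform $\tau$ inside each of $\phi'$ and $\psi'$ produces only $2^k - 2$ hyperbolic planes after summation (rather than $2^k - 1$): the asymmetry is because $\tau'$ contains the trailing $\Qf{1}$ and hence one pair of $\Qf{1}$'s combines into $\Qf{1,0}$ in the Witt decomposition rather than a full hyperbolic plane, exactly as in the proof of \Pref{6.3}. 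The converse again follows by dominating the excess isotropy by a Pfister right divisor.

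Step 3 (Assembling the trichotomy). Set $i = i_W(\phi' \perp \psi')$. If $i=0$ then Step~1 rules out $1$-left-linkage but $i \geq 0 = 2^1 - 2$ allows $1$-right-linkage by Step~2, giving case~(1). If $i \geq 1$, let $k$ be maximal with $i \geq 2^k - 2$; by Step~2 this $k$ is the precise level of right-linkage, so $\phi,\psi$ are $k$-right-linked but not $(k+1)$-right-linked. Either $i \geq 2^k - 1$, in which case Step~1 gives $k$-left-linkage (case (2), with $i = 2^k-1$ since otherwise $i \geq 2^{k+1}-2$ would contradict maximality of $k$); or $i = 2^k - 2$ exactly, in which case Step~1 denies $k$-left-linkage, placing us in case~(3) (where $k \geq 2$ is forced, as $k=1$ collapses into $i=0$, already covered).

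\textbf{Main obstacle.} The essential work is Step~1 and Step~2: extending Theorem 5.2 from the top level $k=n-1$ to arbitrary $k$. The computation of the pure-subform decomposition $\varphi' = b' \tensor (\cdots) \perp b \tensor (\cdots) \perp \Qf{1}$ in terms of a $k$-fold common factor, and the careful tracking of the one-dimensional discrepancy between the bilinear and quadratic cases, is the delicate part; it is precisely this discrepancy that forces the possible values of $i_W$ into the two arithmetic progressions $2^k - 1$ and $2^k - 2$ rather than filling in all intermediate integers.
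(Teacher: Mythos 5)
There is a genuine gap, and it sits at the heart of the statement. First, note that the paper offers no proof of this corollary at all: it is imported verbatim from \cite[Corollary 2.5.6]{Faivre:thesis}, so your argument has to stand alone. Its main content is not the two monotone linkage criteria you formulate but the \emph{quantization} of the Witt index: that $i_W(\phi'\perp\psi')$ can take \emph{only} the values $0$, $2^k-1$ and $2^k-2$, with all intermediate integers excluded. Your Steps 1 and 2, even granted in full, are inequalities and say nothing about which values actually occur, and your attempt to extract the quantization in Step 3 rests on an arithmetic non sequitur: maximality of $k$ with $i\geq 2^k-2$ only yields $2^k-2\leq i\leq 2^{k+1}-3$, so the inference ``$i=2^k-1$ since otherwise $i\geq 2^{k+1}-2$'' is simply false --- for $k=3$, every $i\in\{6,7,\dots,13\}$ is compatible with your two criteria, while the corollary asserts that only $6$ and $7$ occur. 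Excluding $i=2^k,\dots,2^{k+1}-3$ is precisely the characteristic-$2$ analogue of the Elman--Lam linkage theorem that the Witt index attached to a pair of Pfister forms is quantized; it needs its own argument (split off a maximal common factor $b$, write $\phi=b\tensor\varphi_1$, $\psi=b\tensor\varphi_2$, and show any residual isotropy forces a strictly larger common factor), and no bookkeeping with the inequalities of Steps 1--2 will produce it.

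Second, Step 2 is false as stated at $k=1$: the inequality $i_W(\phi'\perp\psi')\geq 2^1-2=0$ holds for every pair, so your ``iff'' would make every two quadratic Pfister forms $1$-right-linked. Case (1) of the corollary says the opposite: at $i_W(\phi'\perp\psi')=0$ the forms ``may still be'' $1$-right-linked, i.e.\ both possibilities occur, so $1$-right-linkage is not detected by $i_W(\phi'\perp\psi')$ at all, and no criterion of your uniform shape can exist at the bottom level. This asymmetry is a sign that the clean two-parameter framework of Steps 1--2 is structurally off, not merely unproven. What is salvageable is the easy half: linkage implies the stated lower bounds on $i_W(\phi'\perp\psi')$, by the pure-subform computation you sketch, exactly in the style of the proofs of \Tref{twoforms} and \Pref{6.3}. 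But the converses --- \Tref{5.2} is Faivre's theorem only at the top level $k=n-1$, and pushing it to all $k$ is nontrivial --- and above all the gap theorem are the real work, and they are missing from the proposal.
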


\subsection{Are strongly tight sets necessarily left-linked?}

We could study \Eref{counter3.6} over the splitting field of all the forms $\MPf{\beta,\beta',\alpha_1,\dots,\alpha_{n-1}}$. The set would become strongly tight, right-linked, pairwise left-linked; but is it left-linked?

\subsection{A lemma that makes the Albert form anisotropic}

The Albert form is $\Alb = \beta_1[1,\alpha_1] \perp \beta_2[1,\alpha_2] \perp [1,\alpha_1+\alpha_2]$.  We generalize \Lref{monice2} and give a condition for it to be anisotropic.
\begin{lem}\label{monice2+}
If $\nu(\alpha_1) \neq \nu(\alpha)$ in $\Gamma$ and If $\nu(\alpha_1),\nu(\alpha_2),\nu(\beta_1),\nu(\beta_2)$ are linearly independent in $\Gamma/2\Gamma$, then $\Alb$ is anisotropic.
\end{lem}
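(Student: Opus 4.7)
The plan is to imitate the valuation-theoretic analysis of \Lref{monice2}: decompose $\Alb$ into its three orthogonal binary summands, compute the set of nonzero values of each one modulo $2\Gamma$, and then use linear independence to force the possible residue classes to be pairwise disjoint, so that the non-archimedean triangle inequality rules out any cancellation between summands.

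First I would read the printed hypothesis ``$\nu(\alpha_1) \neq \nu(\alpha)$'' as the intended $\nu(\alpha_1) \neq \nu(\alpha_2)$, since then the ultrametric inequality gives $\nu(\alpha_1+\alpha_2) = \min\{\nu(\alpha_1),\nu(\alpha_2)\}$, which after swapping indices we may take to equal $\nu(\alpha_1)$. I would also assume the implicit negativity $\nu(\alpha_i),\nu(\beta_i)<0$ needed to invoke \Lref{monice} (it is forced in spirit by the linear independence over $\F_2$, up to choosing generators). Applying \Lref{monice}(2)--(3) with $k=1$ to each of $[1,\alpha_1]$, $[1,\alpha_2]$, and $[1,\alpha_1+\alpha_2]$, each binary form is anisotropic and its set of nonzero values modulo $2\Gamma$ is $\{0,\nu(\alpha_1)\}$, $\{0,\nu(\alpha_2)\}$, and $\{0,\nu(\alpha_1)\}$, respectively. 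Scaling by the prefactors, the nonzero values of $\beta_i[1,\alpha_i]$ lie in the cosets $\nu(\beta_i)+\{0,\nu(\alpha_i)\}$ mod $2\Gamma$.

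Writing a general element of $\Alb$ as
\[
\Alb(w_1,w_2,w_3) = \beta_1[1,\alpha_1](w_1) + \beta_2[1,\alpha_2](w_2) + [1,\alpha_1+\alpha_2](w_3),
\]
the six possible residues that can be attained by the valuations of the nonzero summands are
\[
\nu(\beta_1),\ \nu(\beta_1)+\nu(\alpha_1),\ \nu(\beta_2),\ \nu(\beta_2)+\nu(\alpha_2),\ 0,\ \nu(\alpha_1)
\]
in $\Gamma/2\Gamma$. The central verification is that these six residues are pairwise distinct: each pairwise difference is a nontrivial $\F_2$-linear combination of $\nu(\alpha_1),\nu(\alpha_2),\nu(\beta_1),\nu(\beta_2)$, hence nonzero in $\Gamma/2\Gamma$ by the independence hypothesis. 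This is a routine finite case-check but is genuinely the heart of the proof.

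With distinctness in hand, the remainder is the standard ``minimum-dominates'' argument: if $\Alb(w_1,w_2,w_3)=0$ with $(w_1,w_2,w_3)\neq 0$, then the nonzero summands have pairwise distinct valuations in $\Gamma$ (since distinct mod $2\Gamma$ implies distinct in $\Gamma$), so the summand of minimal valuation cannot be cancelled by the others, a contradiction. Hence each summand vanishes, and anisotropy of each binary piece (provided by \Lref{monice}(2)) forces $w_1=w_2=w_3=0$. The main obstacle is bookkeeping for the six-residue disjointness; once that is in place, everything else is the triangle-inequality machinery already employed for \Lref{monice2}.
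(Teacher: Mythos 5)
Your proof is correct and follows exactly the route the paper sketches for this lemma: list the possible value classes modulo $2\Gamma$ of the three binary summands of $\Alb$, check that linear independence makes the six classes pairwise distinct, and let the ultrametric inequality forbid any cancellation between summands. You are also right to flag the two tacit adjustments — reading $\nu(\alpha)$ as $\nu(\alpha_2)$ and assuming $\nu(\alpha_i),\nu(\beta_i)<0$ so that the value-set computation of \Lref{monice} applies to each binary piece — both of which the paper leaves implicit (its valuations are always chosen to make these values negative).
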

Proof: consider the possible values of $X+Y+Z$. Disjointness in pairs follows from the assumption.

REMARK: A somewhat weaker condition suffices: if $\nu(\alpha_1)<\nu(\alpha_2)$, we only need $\nu(\beta_2) \not \in \ideal{\nu(\alpha_1),\nu(\alpha_2),\nu(\beta_1)}$, $\nu(\beta_1) \not \in \ideal{\nu(\alpha_1)}$ and $\nu(\alpha_1),\nu(\alpha_2) \neq 0$, all modulo $2\Gamma$.

\fi 

\section*{Bibliography}
\bibliographystyle{amsalpha}
\bibliography{bibfile}
\end{document}